\newcommand{\C}{\mathbb{C}}
\newcommand{\QQ}{\mathbb{Q}}
\newcommand{\NN}{\mathbb{N}}
\newcommand{\PP}{\mathbb{P}}
\newcommand{\LLL}{\mathbb{L}}
\newcommand{\OO}{\mathcal O}
\newcommand{\Ss}{\mathcal S}
\newcommand{\Sy}{\mathfrak S}
\newcommand{\DD}{\mathcal D}
\newcommand{\XX}{\mathcal X}
\newcommand{\YY}{\mathcal Y}
\newcommand{\AAA}{\mathcal A}
\newcommand{\Cc}{\mathcal C}
\newcommand{\Zz}{\mathcal Z}
\newcommand{\MM}{\mathcal M}
\newcommand{\gr}{\hbox{Gr}}
\newcommand{\wt}{\widetilde}
\newcommand{\rom}{\romannumeral}
\newcommand*{\da@rightarrow}{\mathchar"0\hexnumber@\symAMSa 4B }
\newcommand*{\da@leftarrow}{\mathchar"0\hexnumber@\symAMSa 4C }
\newcommand*{\xdashrightarrow}[2][]{%
  \mathrel{%
    \mathpalette{\da@xarrow{#1}{#2}{}\da@rightarrow{\,}{}}{}%
  }%
}
\newcommand{\xdashleftarrow}[2][]{%
  \mathrel{%
    \mathpalette{\da@xarrow{#1}{#2}\da@leftarrow{}{}{\,}}{}%
  }%
}
\newcommand*{\da@xarrow}[7]{%
  \sbox0{$\ifx#7\scriptstyle\scriptscriptstyle\else\scriptstyle\fi#5#1#6\m@th$}%
  \sbox2{$\ifx#7\scriptstyle\scriptscriptstyle\else\scriptstyle\fi#5#2#6\m@th$}%
  \sbox4{$#7\dabar@\m@th$}%
  \dimen@=\wd0 %
  \ifdim\wd2 >\dimen@
    \dimen@=\wd2 %
  \fi
  \count@=2 %
  \def\da@bars{\dabar@\dabar@}%
  \@whiledim\count@\wd4<\dimen@\do{%
    \advance\count@\@ne
    \expandafter\def\expandafter\da@bars\expandafter{%
      \da@bars
      \dabar@ 
    }%
  }%
  \mathrel{#3}%
  \mathrel{%
    \mathop{\da@bars}\limits
    \ifx\\#1\\%
    \else
      _{\copy0}%
    \fi
    \ifx\\#2\\%
    \else
      ^{\copy2}%
    \fi
  }%
  \mathrel{#4}%
}
\DeclareMathOperator{\ide}{id}
\newtheorem{theorem}{Theorem}[section]
\newtheorem{claim}[theorem]{Claim}
\newtheorem{corollary}[theorem]{Corollary}
\newtheorem{proposition}[theorem]{Proposition}
\newtheorem{conjecture}[theorem]{Conjecture}
\newtheorem{remark}[theorem]{Remark}
\newtheorem{definition}[theorem]{Definition}
\newtheorem{convention}{Conventions}
\newtheorem{notation}[theorem]{Notation}
\newtheorem{nonumbering}{Theorem}
\newtheorem{nonumberingt}{Acknowledgements}
\begin{document}
\author[Robert Laterveer]
{Robert Laterveer}

\address{Institut de Recherche Math\'ematique Avanc\'ee,
CNRS -- Universit\'e 
de Strasbourg,\
7 Rue Ren\'e Des\-car\-tes, 67084 Strasbourg CEDEX,
FRANCE.}
\email{robert.laterveer@math.unistra.fr}

\title[Lagrangian subvarieties in the Chow ring of HK varieties]{Lagrangian subvarieties in the Chow ring of some hyperk\"ahler varieties}

\begin{abstract} Let $X$ be a hyperk\"ahler variety, and let $Z\subset X$ be a Lagrangian subvariety. Conjecturally, $Z$ should have trivial intersection with certain parts of the 
Chow ring of $X$. We prove this conjecture for certain Hilbert schemes $X$ having a Lagrangian fibration, and $Z\subset X$ a general fibre of the Lagrangian fibration.
 \end{abstract}

\keywords{Algebraic cycles, Chow ring, motives, Bloch--Beilinson filtration, hyperk\"ahler variety, Lagrangian subvariety, constant cycle subvariety, (Hilbert scheme of) $K3$ surface, Beauville's splitting property, multiplicative Chow--K\"unneth decomposition, spread of algebraic cycles}
\subjclass[2010]{Primary 14C15, 14C25, 14C30.}

\maketitle

\section{Introduction}

For a smooth projective variety $X$ over $\C$, let $A^i(X):=CH^i(X)_{\QQ}$ denote the Chow groups (i.e. the groups of codimension $i$ algebraic cycles on $X$ with $\QQ$--coefficients, modulo rational equivalence). 

The world of Chow groups is a huge building site that is still under construction, with many unfinished parts that only exist ``in pencil'', i.e. dependent on conjectures \cite{B}, \cite{J2}, \cite{J4}, \cite{Kim}, \cite{Mur}, \cite{Vo}, \cite{MNP}. In this building site, one place of particular interest is occupied by hyperk\"ahler varieties (i.e. projective irreducible holomorphic symplectic manifolds \cite{Beau1}, \cite{Beau0}). Here, recent years have seen an intense activity of new constructions and significant progress in the understanding of Chow groups \cite{Beau3}, \cite{V13}, \cite{V14}, \cite{V17}, \cite{SV}, \cite{V6}, \cite{Rie}, \cite{Rie2}, \cite{LFu}, \cite{LFu2}, \cite{Lin}, \cite{Lin2}, \cite{FTV}.
Much of this progress has centered around the following conjecture:

\begin{conjecture}[Beauville, Voisin \cite{Beau3}, \cite{V17}]\label{conjbv} Let $X$ be a hyperk\"ahler variety. Let $D^\ast(X)\subset A^\ast(X)$ denote the $\QQ$--subalgebra generated by divisors and Chern classes of $X$. Then the cycle class maps induce injections
  \[ D^i(X)\ \hookrightarrow\ H^{2i}(X,\QQ)\ \ \ \forall i\ .\] 
  \end{conjecture}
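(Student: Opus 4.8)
The plan is to deduce the injectivity from the structure of the Chow ring provided by a \emph{multiplicative Chow--K\"unneth decomposition} (MCK, in the sense of Shen--Vial), combined with the Beauville--Voisin theorem on $K3$ surfaces. In the cases of primary interest --- Hilbert schemes $S^{[n]}$ of a $K3$ surface, and, similarly, generalized Kummer varieties --- such a decomposition is available, so the argument is not purely conjectural there.

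First I would fix a self-dual MCK decomposition $\{\pi^X_i\}$ on $X$; by the Shen--Vial formalism this yields a multiplicative bigrading $A^i(X)=\bigoplus_j A^i_{(j)}(X)$, i.e. $A^i_{(j)}(X)\cdot A^{i'}_{(j')}(X)\subseteq A^{i+i'}_{(j+j')}(X)$ (with the graded pieces $A^i_{(j)}(X)$ for $j<0$ vanishing, and the cycle class map injective on the lowest piece $A^i_{(0)}(X)$ --- these being Bloch--Beilinson-type properties known in the explicitly constructed cases). Next I would locate $D^\ast(X)$ inside this grading: divisor classes lie in $A^1_{(0)}(X)$, because the holomorphic symplectic form rigidifies $A^2(X)$ and an analogue of the Beauville--Voisin canonical $0$-cycle controls products of divisors; and the Chern classes $c_j(T_X)$ lie in $A^j_{(0)}(X)$, this being a known feature of hyperk\"ahler varieties equipped with an MCK. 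Since $A^\ast_{(0)}(X)=\bigoplus_i A^i_{(0)}(X)$ is a subalgebra of $A^\ast(X)$, it contains the subalgebra $D^\ast(X)$ generated by these classes, whence $D^i(X)\subseteq A^i_{(0)}(X)\hookrightarrow H^{2i}(X,\QQ)$.

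The content then reduces to verifying, for the concrete $X$ at hand, that $A^\ast_{(0)}(X)$ injects into cohomology. For $X=S^{[n]}$ I would invoke the de Cataldo--Migliorini description of $A^\ast(S^{[n]})$ in terms of the Chow groups of the products $S^k$ and the action of the symmetric groups, which is compatible with the MCK bigrading; this reduces the statement to showing that the subring of $A^\ast(S^k)$ generated by pullbacks of divisors, diagonals, and the canonical $0$-cycle $\mathfrak{o}_S$ injects into $H^\ast(S^k,\QQ)$. The latter is precisely a consequence of the Beauville--Voisin theorem for $K3$ surfaces: the image of $A^1(S)^{\otimes 2}\to A^2(S)$ is $\QQ\cdot\mathfrak{o}_S$, and $c_2(T_S)=24\,\mathfrak{o}_S$.

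The hard part will be this last step in any generality: injectivity of the lowest graded piece $A^\ast_{(0)}$ is essentially the Bloch--Beilinson conjecture for $X$, and outside the cases where the Chow ring has been described explicitly it is open. An alternative, MCK-free route --- the one by which the $S^{[2]}$ and $S^{[3]}$ cases were first established --- is Voisin's \emph{spreading} method: placing $X$ in a family $\mathcal{X}\to B$, one shows that any universal polynomial relation among divisors and Chern classes that holds in $H^\ast$ of a very general fibre in fact lifts to a relation in $A^\ast(\mathcal{X})$ modulo cycles supported over a proper closed subset of $B$, and then uses the hyperk\"ahler geometry to absorb the error terms. The technical bottleneck there is the same --- controlling the fibrewise cohomology in which the spread cycle must live --- so this reformulation clarifies, but does not remove, the main difficulty.
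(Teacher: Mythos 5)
You have set out to prove something that the paper does not prove and does not claim to prove: the statement you were given is Conjecture \ref{conjbv}, the Beauville--Voisin conjecture, which the paper merely recalls from \cite{Beau3}, \cite{V17} as motivation. It is open in general, and the paper's actual theorems (\ref{main4}, \ref{main6}, \ref{main}, \ref{hardhilb}) are different, special statements about Lagrangian fibres and a hard Lefschetz property for $A^2_{(2)}$ of low-genus Hilbert schemes. So there is no ``paper proof'' to match, and any purported proof of the conjecture should be treated with suspicion from the start.

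Your argument indeed has a genuine gap, and it sits exactly where you half-admit it does. Granting an MCK decomposition and the placement of divisors and Chern classes in $A^\ast_{(0)}(X)$ (which is fine for $S^{[n]}$ by \cite{V6}, \cite{SV}), the conjecture reduces to the injectivity of the cycle class map on $A^\ast_{(0)}(X)$. This is precisely the property the paper flags as ``expected (but not proven!)'': $A^i_{(0)}(X)\cap A^i_{hom}(X)\stackrel{??}{=}0$, a Murre conjecture D / Bloch--Beilinson type statement (and even $A^i_{(j)}(S^{[m]})=0$ for $j<0$ is open for $m>2$). Your attempt to close this for $X=S^{[n]}$ by passing through de Cataldo--Migliorini to $S^k$ and then asserting that injectivity of the subring of $A^\ast(S^k)$ generated by divisor pullbacks, diagonals and $\mathfrak{o}_S$ ``is precisely a consequence of the Beauville--Voisin theorem'' is where the argument breaks: the Beauville--Voisin theorem only controls cycles on $S$ itself, and the corresponding statement on powers $S^k$ is a genuinely harder theorem --- it is the content of \cite{V17} and of Yin's work \cite{Yin}, is known only for $k$ small relative to the transcendental Betti number, and beyond that range is essentially equivalent to Kimura finite-dimensionality of $S$. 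Cohomological relations among diagonals and divisors on $S^k$ do not lift to Chow relations for free; establishing exactly such liftings is the hard content. So the proposal does not prove the conjecture; it reduces it to open conjectures, which is consistent with the fact that the statement is, in the paper as in the literature, still a conjecture.
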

  
  (cf. \cite{Beau3}, \cite{V17}, \cite{BV}, \cite{Fer}, \cite{Rie2}, \cite{LFu}, \cite{Yin} for cases where conjecture \ref{conjbv} is satisfied.)
  
  The ``motivation'' underlying conjecture \ref{conjbv} is that for a hyperk\"ahler variety $X$, the Chow ring $A^\ast(X)$ is expected to have a bigrading $A^\ast_{[\ast]}(X)$, where the piece $A^i_{[j]}(X)$ corresponds to the graded $\gr^j_F A^i(X)$ for the conjectural Bloch--Beilinson filtration. In particular, it is expected that the subring $A^\ast_{[0]}(X)$ injects into cohomology, and that $D^\ast(X)\subset A^\ast_{[0]}(X)$.
  
 In addition to divisors and Chern classes, what other cycles should be in the subring $A^\ast_{[0]}(X)$ (assuming this subring exists) ?
 A conjecture of Voisin provides more candidat members:
 
 \begin{conjecture}[Voisin \cite{V14}]\label{conjv} Let $X$ be a hyperk\"ahler variety of dimension $n=2m$. Let $Z\subset X$ be a codimension $i$ subvariety swept out by $i$--dimensional constant cycle subvarieties. There exists a subring $A^\ast_{[0]}(X)\subset A^\ast(X)$ injecting into cohomology, and
   \[ Z\ \ \in A^i_{[0]}(X)\ .\]
   \end{conjecture}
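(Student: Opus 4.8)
The plan is to treat the two assertions of the conjecture separately: first to produce the subring $A^\ast_{[0]}(X)$, and then to locate the class of $Z$ inside it. For the first I would realise Beauville's splitting property concretely through a multiplicative Chow--K\"unneth (MCK) decomposition $\{\pi_i\}$ in the sense of Shen--Vial \cite{SV}. For the Hilbert schemes of $K3$ surfaces that are the relevant testing ground, such a decomposition is available (\cite{V6}, \cite{SV}), and it induces a bigrading $A^i_{(j)}(X):=(\pi_{2i-j})_\ast A^i(X)$, $A^i(X)=\bigoplus_j A^i_{(j)}(X)$, matching the expected $A^i_{(j)}(X)=\gr^j_F A^i(X)$. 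Multiplicativity makes $A^\ast_{(0)}(X):=\bigoplus_i A^i_{(0)}(X)$ a graded subalgebra, which in these cases injects into $H^\ast(X,\QQ)$ by \cite{Yin}; I set $A^\ast_{[0]}(X):=A^\ast_{(0)}(X)$, so that the existence clause of the conjecture is reduced to the availability of the MCK decomposition.

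With the grading fixed, the assertion $Z\in A^i_{[0]}(X)$ becomes the vanishing of the positive-depth components $(\pi_{2i-j})_\ast[Z]=0$ for all $j>0$. Since the $K3^{[m]}$-type varieties in play carry no odd cohomology, only even $j$ occur, and through the Fourier-theoretic description of \cite{SV} and Vial's analysis \cite{V13}, \cite{V14} each positive-depth piece is controlled by the transcendental behaviour of $Z$, ultimately by its action on $0$-cycles modulo the canonical cycle $\oo_X$. I would therefore recast the goal as a purely $0$-cycle statement: the variable part of the family of $0$-cycles swept out by $Z$ must be rationally trivial.

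The geometric hypothesis now enters. Let $p\colon P\to T$, $q\colon P\to X$ be the universal family of $i$-dimensional constant cycle subvarieties sweeping out $Z$, so that $\dim T=2m-2i$, $\dim P=2m-i$, and $q_\ast[P]$ is a multiple of $[Z]$. For each $t$ all points of the fibre $q(p^{-1}(t))$ are rationally equivalent in $X$, so the fibrewise $0$-cycle is constant along $T$. Spreading this rational equivalence out over the base in the manner of Voisin's spread technique, together with a relative Bloch--Srinivas decomposition of the diagonal of $q$, the fibrewise constancy should propagate to the vanishing of the variable cohomology carried by $q_\ast[P]$, which is exactly the input needed to kill the positive-depth components identified in the previous step.

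The hard part will be precisely this last conversion. Identifying the Chow-theoretic depth cut out by the projectors $\pi_{2i-j}$ with the geometric coniveau furnished by the spread is the content of the Bloch--Beilinson conjectures, which are unavailable unconditionally; this is why Conjecture \ref{conjv} is open in general. In concrete cases---Hilbert schemes of $K3$ surfaces, where the MCK decomposition is explicit and the relevant part of $A^\ast(X)$ is governed by the tautological relations of \cite{Yin}---I would instead establish $(\pi_{2i-j})_\ast[Z]=0$ by direct computation, expressing the classes of the sweeping constant cycle subvarieties through the tautological generators and checking that their positive-depth contributions cancel. Reducing the transcendental vanishing to such an effective calculation is what makes the special cases accessible while the general conjecture remains out of reach.
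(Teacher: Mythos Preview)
The statement you are asked to prove is a \emph{conjecture}; the paper does not prove it, and does not claim to. What the paper does is verify the assertion $Z\in A^\ast_{(0)}(X)$ in the very special situation where $Z$ is a general fibre $A$ of a Lagrangian fibration $\phi\colon X\to\PP^m$ on a Hilbert scheme of a low-genus $K3$ surface (theorems \ref{main4} and \ref{main6}). In that case the argument is one line and has nothing to do with the spread/Bloch--Srinivas machinery you outline: a point of $\PP^m$ is an intersection of $m$ hyperplanes, so $A=\phi^\ast(\mathrm{pt})$ is an intersection of $m$ divisors, hence lies in $A^m_{(0)}(X)$ by multiplicativity. Your proposal does not engage with this special case at all, and for the general statement you correctly acknowledge that the crucial step (matching the depth filtration with geometric coniveau) is equivalent to Bloch--Beilinson and therefore unavailable.

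There is, however, a genuine gap even in the part of your outline that you present as unconditional. You claim that for Hilbert schemes of $K3$ surfaces the subring $A^\ast_{(0)}(X)$ injects into cohomology ``by \cite{Yin}'', and you use this to dispatch the existence clause of the conjecture. This is not known. Yin's result concerns the \emph{tautological} subring generated by divisors and Chern classes; the full piece $A^\ast_{(0)}(X)$ is in general strictly larger, and the vanishing $A^i_{(0)}(X)\cap A^i_{hom}(X)=0$ is exactly Murre's conjecture~D for the MCK decomposition, which the paper explicitly flags as open (see the remark following the definition of MCK decomposition, and the appearance of the ``troublesome part'' $A^2_{(0)}(X)\cap A^2_{hom}(X)$ in the final remark of the paper). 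So your candidate $A^\ast_{[0]}(X):=A^\ast_{(0)}(X)$ is not known to inject into cohomology even in the testing ground you single out, and the first half of the conjecture is not settled by the MCK decomposition alone.
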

   
  A {\em constant cycle subvariety\/} is by definition a closed subvariety $T \subset X$ such that the image of the natural map $A_0(T)\to A^n(X)$ has dimension $1$. In particular, conjecture \ref{conjv} stipulates that Lagrangian constant cycle subvarieties (i.e., constant cycle subvarieties of dimension $m$) should lie in $A^m_{[0]}(X)$.
  
  Another conjecture concerns the behaviour of Lagrangian subvarieties (i.e. $m$--dimensional subvarieties $Z\subset X$ such that the symplectic form of $X$ restricts to $0$ on the regular part of $Z$) with respect to the intersection product. The Lagrangian condition implies that
  \[  \cup Z\colon\ \ \ H^{2,0}(X)\ \to\ H^{m+2,m}(X) \]
  is the zero map.
    Since $H^{\ast,0}(X)$ is generated by $H^{2,0}(X)$, we have that
   \[ \cup Z\colon\ \ \ H^{j,0}(X)\ \to\ H^{m+j,m}(X) \]
  is the zero map for all $j>0$.   
  Since conjecturally, the piece $A^j_{[j]}(X)$ is determined by $H^{j,0}(X)$, and the piece $A^{m+j}_{[j]}(X)$ is determined by $H^{2m+j}(X)$, we arrive at the following conjecture:
  
  \begin{conjecture}\label{conjco} Let $X$ be a hyperk\"ahler variety of dimension $2m$. Let $Z\subset X$ be a Lagrangian subvariety.
   Then the maps
    \[ \begin{split}    &A^j_{[j]}(X) \ \xrightarrow{\cdot Z}\  A^{m+j}(X)   \ \to\ A^{m+j}_{[j]}(X) \ ,\\
                                  &A^m_{[j]}(X) \ \xrightarrow{\cdot Z}\  A^{2m}(X)   \ \to\ A^{2m}_{[j]}(X) \ ,\\
       \end{split}    \]
   are zero for all $j>0$. (Here, the right arrows are projection to the piece $A^{\ast}_{[j]}(X)$.)
     \end{conjecture}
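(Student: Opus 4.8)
The plan is to take the conjectural bigrading $A^\ast_{[\ast]}(X)$ as given (the statement already presupposes it) together with its two expected structural properties: that it is \emph{multiplicative}, meaning the intersection product sends $A^i_{[j]}(X)\otimes A^{i'}_{[j']}(X)$ into $A^{i+i'}_{[j+j']}(X)$, and that it is \emph{Bloch--Beilinson faithful}, meaning each graded piece $A^i_{[j]}(X)$ is controlled by the Hodge piece $H^{i,i-j}(X)$ of the weight-$(2i-j)$ cohomology, compatibly with the action of correspondences. Granting these, the whole argument becomes a reduction of the Chow-theoretic vanishing to the cohomological vanishing already recorded in the excerpt.

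First I would isolate the relevant component of the class of $Z$. Writing $[Z]=\sum_{k\geq 0}Z_{[k]}$ with $Z_{[k]}\in A^m_{[k]}(X)$, multiplicativity gives $A^j_{[j]}(X)\cdot Z_{[k]}\subset A^{m+j}_{[j+k]}(X)$, so that the composite in the first displayed map — product followed by projection onto $A^{m+j}_{[j]}(X)$ — only retains the term $k=0$. Hence the first assertion is equivalent to the vanishing of
\[ A^j_{[j]}(X)\ \xrightarrow{\ \cdot\,Z_{[0]}\ }\ A^{m+j}_{[j]}(X)\ , \]
and, by the identical bookkeeping, the second assertion reduces to the vanishing of $A^m_{[j]}(X)\xrightarrow{\cdot\,Z_{[0]}}A^{2m}_{[j]}(X)$. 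Here $Z_{[0]}\in A^m_{[0]}(X)$ is the part of $[Z]$ that injects into cohomology, so it is faithfully recorded by the class $[Z]\in H^{m,m}(X)$.

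Next I would transport these two reduced product maps to cohomology via faithfulness. For the first map the source is controlled by $H^{j,0}(X)$ and the target by $H^{m+j,m}(X)$, and the operation $\cdot\,Z_{[0]}$ corresponds to the cup product $\cup[Z]\colon H^{j,0}(X)\to H^{m+j,m}(X)$, which is zero by the Lagrangian condition spelled out in the excerpt. For the second map the relevant Hodge pieces are $H^{m,m-j}(X)$ and $H^{2m,2m-j}(X)$, and $\cdot\,Z_{[0]}$ corresponds to $\cup[Z]\colon H^{m,m-j}(X)\to H^{2m,2m-j}(X)$; this is, up to Poincar\'e duality and complex conjugation, the adjoint of $\cup[Z]\colon H^{0,j}(X)\to H^{m,m+j}(X)$, the conjugate of the first cup product, and hence also zero. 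Faithfulness then forces both Chow-level maps to vanish.

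The hard part is precisely the passage from cohomology back to the Chow ring, i.e. the Bloch--Beilinson faithfulness used in the previous step: for a general hyperk\"ahler variety neither the bigrading nor, above all, its faithfulness is available, and it is exactly this faithfulness that does the real work of upgrading a cohomological vanishing to a vanishing in $A^\ast(X)$. I expect the only way to make the argument unconditional is to restrict to hyperk\"ahler varieties where a multiplicative Chow--K\"unneth decomposition supplies the bigrading, and to replace the abstract faithfulness by a concrete \emph{spread-out} argument in the style of Voisin: realise $Z$ inside an algebraic family whose base parametrises sufficiently many Lagrangians, establish the cup-product vanishing as a cohomological relation over the total space, and then use the Bloch--Beilinson-type input available in that geometry to descend the fibrewise cohomological relation to a relation in the Chow ring. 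Making precise which family to spread over, and why the descent is lossless on the graded pieces $A^{m+j}_{[j]}$ and $A^{2m}_{[j]}$, is where essentially all the difficulty lies, and is presumably the reason the result can be established only for Hilbert schemes carrying a Lagrangian fibration with $Z$ a general fibre.
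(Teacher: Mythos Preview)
The statement is a \emph{conjecture} in the paper and is not proved there; the introduction only offers the heuristic motivation that the Lagrangian condition forces $\cup Z\colon H^{j,0}(X)\to H^{m+j,m}(X)$ to vanish for $j>0$, and that conjecturally $A^j_{[j]}(X)$ is controlled by $H^{j,0}(X)$. Your proposal reproduces and mildly expands that same heuristic, and you correctly identify the unproven step as the Bloch--Beilinson faithfulness needed to lift the cohomological vanishing to Chow groups. The paper's actual unconditional content verifies only special cases of the conjecture (Hilbert schemes of $K3$ surfaces of genus $g\le 10$, with $Z$ a general fibre of a Lagrangian fibration) via Voisin's spread technique, exactly as you anticipate in your final paragraph.
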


The goal of this note is to provide some examples where conjectures \ref{conjv} 
and \ref{conjco} 
are satisfied, by looking at Hilbert schemes of $K3$ surfaces. Here, thanks to work of Vial \cite{V6} and of Shen--Vial \cite{SV}, the Chow groups of $X$ split in a finite number of pieces $A^\ast_{(\ast)}(X).$\footnote{NB: we will reserve the notation $A^\ast_{(\ast)}()$ for the bigrading that is constructed unconditionally in \cite{SV}, \cite{V6} for certain hyperk\"ahler varieties. The notation $A^\ast_{[\ast]}()$, that occurs only in this introduction, refers to a {\em conjectural\/} bigrading with the property that $A^i_{[j]}(X)$ is related to the graded $\gr^j_F A^i(X)$ for the conjectural Bloch--Beilinson filtration. In short, the (unconditionally existing) bigrading $A^\ast_{(\ast)}()$ is a candidate for the (only ideally existing) bigrading $A^\ast_{[\ast]}()$.}

 The first series of examples consists of Hilbert squares $X=S^{[2]}$, where $S$ is a general $K3$ surface of genus $g$. If the integer $g$ satisfies $2g-2=2m^2$ for some integer $m\ge 2$, the Hilbert square $X$ admits a Lagrangian fibration $\phi\colon X\to\PP^2$ \cite{HT} (cf. subsection \ref{ssfib}). The general fibre $A$ of $\phi$ is Lagrangian; it thus makes sense to ask whether conjecture \ref{conjco} is true for $A\subset X$.
 We give an answer for the first two values of $g$:
 
 \begin{nonumbering}[=theorem \ref{main4} and corollary \ref{cor4}] Let $X=S^{[2]}$, where $S$ is a general $K3$ surface of genus $g=5$ or $g=10$. Let $A\subset X$ be a general fibre of the Lagrangian
 fibration $\phi$. Then $A\in A^2_{(0)}(X)$ and
   \[ \cdot A\colon\ \ \ A^2_{hom}(X)\ \to\ A^4(X) \]
   is the zero map.
   
   (In particular, let $b\in A^4(X)$ be a $0$--cycle of the form $b=A\cdot c$ with $c\in A^2(X)$. Then $b$ is rationally trivial if and only if $b$ has degree $0$.)
  \end{nonumbering}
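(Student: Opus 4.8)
Being a Lagrangian fibration, $\phi\colon X\to\PP^2$ is equidimensional (Matsushita), hence flat over the smooth base $\PP^2$; so for general $t\in\PP^2$ the scheme-theoretic fibre is the reduced surface $A$, and $A=\phi^\ast(t)$ in $A^2(X)$. Since $A^\ast(\PP^2)=\QQ[H]/(H^3)$ with $H=c_1(\OO_{\PP^2}(1))$ and $[t]=H^2$, this yields $A=L^2$ in $A^2(X)$, where $L:=\phi^\ast H\in A^1(X)=A^1_{(0)}(X)$. As the Shen--Vial bigrading $A^\ast_{(\ast)}(X)$ is a multiplicative Chow--K\"unneth decomposition, it follows at once that $A=L\cdot L\in A^2_{(0)}(X)$. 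I would also record the relation $L^3=\phi^\ast(H^3)=0$ in $A^3(X)$, the Chow-theoretic shadow of $q(L)=0$ for the Beauville--Bogomolov--Fujiki form $q$.

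\textbf{Reduction to a self-correspondence of $\mathfrak{t}(S)$.} For the vanishing of $\cdot A$, recall that for a Hilbert square $X=S^{[2]}$ one has $A^2_{hom}(X)=A^2_{(2)}(X)$, and $A^4(X)=A^4_{(0)}(X)\oplus A^4_{(2)}(X)\oplus A^4_{(4)}(X)$ with $A^4_{(0)}(X)$ one-dimensional. By multiplicativity $\cdot A=\cdot L^2$ sends $A^2_{(2)}(X)$ into $A^4_{(2)}(X)$, so it is enough to prove that $\cdot A\colon A^2_{(2)}(X)\to A^4_{(2)}(X)$ is zero. I would then transport this to $S$ itself: using the universal length-$2$ subscheme $\Xi\subset X\times S$ and the description of $A^\ast(X)$ of Vial and Shen--Vial, there are correspondence-induced isomorphisms $A^2_{(2)}(X)\cong A^2(\mathfrak{t}(S))=A_0(S)_{hom}$ and $A^4_{(2)}(X)\cong A_0(S)_{hom}$, where $\mathfrak{t}(S)$ is the transcendental motive of $S$. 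Composing the correspondences realizing these isomorphisms with multiplication by $A$ (itself given by a correspondence, since $A=L^2$ is a cycle class) produces a \emph{generically defined} self-correspondence $\gamma\in A^2(S\times S)$ of $\mathfrak{t}(S)$, and the statement to prove becomes: $\gamma$ acts as zero on $A_0(S)_{hom}=A^2(\mathfrak{t}(S))$.

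\textbf{Cohomological vanishing.} This is where the Lagrangian hypothesis enters. By the Fujiki relation on the hyperk\"ahler fourfold $X$, for any $\alpha$ in the transcendental part $H^2_{\mathrm{tr}}(X)=H^2_{\mathrm{tr}}(S)$ and any $\beta\in H^2(X,\QQ)$,
\[ \int_X L^2\cup\alpha\cup\beta\ =\ c_X\bigl(q(L)\,q(\alpha,\beta)+2\,q(L,\alpha)\,q(L,\beta)\bigr)\ =\ 0, \]
because $q(L)=0$ and $q(L,\alpha)=0$ (a divisor class being orthogonal to the transcendental lattice). Hence $\cup[A]$ annihilates $H^2_{\mathrm{tr}}(X)$, and tracing this through the identifications above, the transcendental refinement of $\gamma$ is homologically trivial.

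\textbf{From homological to rational triviality --- the main obstacle.} It remains to upgrade this to the assertion that the homologically trivial, generically defined self-correspondence of $\mathfrak{t}(S)$ acts as zero on $A_0(S)_{hom}$; I expect this to be the crux, and the place where the restriction $g\in\{5,10\}$ is genuinely needed. The plan is to invoke the Franchetta property for $S\times S$ --- that a generically defined cycle in $A^2(\mathcal{S}\times_{\mathcal{B}}\mathcal{S})$ which is fibrewise homologically trivial is fibrewise rationally trivial --- which is available for $S$ a general $K3$ surface of genus $g$ in this range. Since $\gamma$, sandwiched between the (generically defined) transcendental projectors of $S$, is generically defined and fibrewise homologically trivial by the previous step, it is fibrewise rationally trivial for the general $S$, hence acts as zero on $A_0(S)_{hom}$. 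Granting this, $A\cdot c=0$ for every $c\in A^2_{hom}(X)$, and the final assertion is formal: if $b=A\cdot c$ with $c=c_0+c_2\in A^2_{(0)}(X)\oplus A^2_{(2)}(X)$, then $b=A\cdot c_0$ lies in the one-dimensional group $A^4_{(0)}(X)$, so $b$ is rationally trivial if and only if $\degr b=0$.
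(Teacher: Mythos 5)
Your overall strategy is sound and close in spirit to the paper's (everything rests on the Lagrangian fibre class being generically defined over the moduli of genus~$g$ Mukai models, plus a spread argument for codimension~$2$ cycles on $\Ss\times_B\Ss$), but there is one genuine gap. You assert that $A^2_{hom}(X)=A^2_{(2)}(X)$ for $X=S^{[2]}$. This is not known: it is equivalent to $A^2_{(0)}(X)\cap A^2_{hom}(X)=0$, which is precisely the conjectural statement (of Murre B/D type) that the paper is careful not to use. Indeed $A^2_{(0)}(X)$ contains, via de Cataldo--Migliorini, classes of the form $\pi_2\circ z\circ\pi_2$ for arbitrary $z\in A^2(S\times S)$, and their injectivity into cohomology would require something like finite--dimensionality of $h(S)$, which is open. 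As written, your argument only shows that $\cdot A$ kills $A^2_{(2)}(X)$. The repair is one line, and is exactly what the paper does in theorem \ref{main4}: write $A^2_{hom}(X)=A^2_{(2)}(X)\oplus\bigl(A^2_{(0)}(X)\cap A^2_{hom}(X)\bigr)$; since $A\in A^2_{(0)}(X)$ and the bigrading is multiplicative, the second summand is sent into $A^4_{(0)}(X)\cap A^4_{hom}(X)$, which \emph{is} known to vanish because $A^4_{(0)}(S^{[2]})\cong\QQ$ (Shen--Vial).

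Two smaller points. First, the ``Franchetta property for $S\times S$'' in the strong form you quote (fibrewise homologically trivial $\Rightarrow$ fibrewise rationally trivial) is more than is justified here; what is available is Voisin's spread result (proposition \ref{voisin1}): a generically defined, fibrewise homologically trivial cycle in $A^2(\Ss\times_B\Ss)$ is fibrewise rationally equivalent to the restriction of a cycle from $\PP_g\times\PP_g$, and since $\PP_g$ has trivial Chow groups such a restriction acts as zero on $A^\ast_{hom}(S_b)$ --- which is all your argument needs. Second, $A^4_{(2)}(S^{[2]})$ is two copies of $A_0(S)_{hom}$ (one from the symmetric square, one from the boundary part of the de Cataldo--Migliorini decomposition), so your $\gamma$ is really a finite collection of correspondences in $A^2(S\times S)$, each handled identically; and the generic definedness of ``multiplication by $A$'' rests on the relative existence of the fibration (Mukai for $g=5$, Hassett--Tschinkel over the Picard-rank-one locus for $g=10$), which you use implicitly and should cite. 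Granting these repairs, your route is a genuine variant of the paper's: instead of reducing to $S^{(2)}$ via Hilbert--Chow and inverting $\cdot L^{2m-2}$ on $A^{2m}_{(2)}$ by the hard Lefschetz correspondence of theorem \ref{hard}/corollary \ref{hardhilb}, you transport both $A^2_{(2)}$ and $A^4_{(2)}$ of $S^{[2]}$ directly to $0$--cycles on $S$ by generically defined correspondences; moreover your Fujiki computation yields the stronger cohomological input that $\cup[A]$ annihilates all of $H^2_{tr}(X)$ rather than just $H^{2,0}$, which streamlines the Lefschetz $(1,1)$ step in the proof of theorem \ref{main}.
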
 

(The $g=5$ case of theorem \ref{main4} was already done in \cite{GFC}.)   

The second series of examples consists of Hilbert cubes $X=S^{[3]}$, where $S$ is a general $K3$ surface of genus $g$. For $g=9$, the Hilbert cube $X$ admits a Lagrangian 
fibration $\phi\colon X\to\PP^3$ \cite{IR} (cf. subsection \ref{ssfib}). We establish a weak version of conjecture \ref{conjco} for this case:

 \begin{nonumbering}[=theorem \ref{main6} and corollary \ref{cor6}] Let $X=S^{[3]}$, where $S$ is a general $K3$ surface of genus $9$. Let $A\subset X$ be a general fibre of the Lagrangian
 fibration $\phi$. Then $A\in A^3_{(0)}(X)$ and
   \[  A^2_{hom}(X)\ \xrightarrow{\cdot A}\ A^5(X)\ \xrightarrow{\cdot D}\ A^6(X) \]
   is the zero map, for any divisor $D\in A^1(X)$.
   
   (In particular, let $b\in A^6(X)$ be a $0$--cycle of the form $b=A\cdot D\cdot c\,$, where $D\in A^1(X)$ and $c\in A^2(X)$. Then $b$ is rationally trivial if and only if $b$ has degree $0$.)
  \end{nonumbering}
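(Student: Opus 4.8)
\emph{Step 1: $A\in A^3_{(0)}(X)$.}
I would combine the explicit geometry of the Lagrangian fibration, the multiplicative bigrading $A^\ast_{(\ast)}$ of $S^{[3]}$ from \cite{V6}, \cite{SV}, and a spreading argument over the moduli of genus $9$ $K3$ surfaces. The fibration $\phi\colon X\to\PP^3$ of \cite{IR} is $\phi_{|L|}$ for a base--point--free linear system $|L|$ with $\dim|L|=3$, where $L=\phi^\ast\OO_{\PP^3}(1)$; by Matsushita's theorem $\phi$ is equidimensional, so a general fibre $A=\phi^{-1}(p)$ is the proper intersection of three general divisors in $|\phi^\ast\OO_{\PP^3}(1)|$, whence $[A]=L^3$ in $A^3(X)$. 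Since $X=S^{[3]}$ is simply connected one has $A^1(X)=A^1_{(0)}(X)$, and multiplicativity of the bigrading then gives $[A]=L\cdot L\cdot L\in A^3_{(0)}(X)$.

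\emph{Step 2: reduction to a correspondence.}
It remains to show $c\cdot A\cdot D=0$ for every $c\in A^2_{hom}(X)$ and $D\in A^1(X)$. First I would record that $A^2_{hom}(X)=A^2_{(2)}(X)\oplus A^2_{(4)}(X)$ (the odd pieces of the bigrading on $S^{[3]}$ vanishing, by \cite{V6} and the de Cataldo--Migliorini decomposition) and that, since $A\in A^3_{(0)}$ and $D\in A^1_{(0)}$, multiplicativity gives $c\cdot A\cdot D\in A^6_{(2)}(X)\oplus A^6_{(4)}(X)$ --- which by itself does not yet give vanishing. I would then package $c\mapsto c\cdot A\cdot D$ as a correspondence $\Gamma\in A^{10}(X\times X)$ (so that $\Gamma_\ast\colon A^2(X)\to A^6(X)$), set $\Gamma':=\Gamma\circ\pi$ with $\pi:=\pi^X_2+\pi^X_0$ the sum of Chow--K\"unneth projectors --- so that $\pi$ is idempotent, is the projector onto $A^2_{hom}(X)=A^2_{(2)}(X)\oplus A^2_{(4)}(X)$ on $A^2(X)$, and kills $H^4(X)$ --- and observe that $\Gamma'$ agrees with $\Gamma$ on $A^2_{hom}(X)$ and acts as zero on $H^4(X)$.

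\emph{Step 3: spreading over moduli.}
Because $[A]=L^3$ and $D$ are polynomials in the generically defined divisor classes of $S^{[3]}$ (the polarisation class and the half--exceptional class), and the Chow--K\"unneth projectors of $S^{[3]}$ are generically defined, the correspondence $\Gamma'$ is generically defined over the moduli space of genus $9$ $K3$ surfaces. I would then run Voisin's spreading technique (\cite{V17}; cf.\ \cite{Lin}, \cite{Lin2}, \cite{GFC}), using finite--dimensionality of the motive of $S^{[3]}$ and the refined decomposition above: the cohomological triviality of $\Gamma'$ forces $\Gamma'_\ast$ --- hence $\Gamma_\ast$ --- to send $A^2_{hom}(X)$ into the subgroup of $A^6(X)=A_0(X)$ generated by cycles pulled back from the moduli base, which is just $\QQ\,o_X$; combined with $c\cdot A\cdot D\in A^6_{(2)}(X)\oplus A^6_{(4)}(X)$ and $A^6_{(0)}(X)=\QQ\,o_X$, this yields $c\cdot A\cdot D=0$ for $S$ in a dense open of moduli, i.e.\ for a general genus $9$ $K3$ surface. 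The corollary is then formal: writing $c=c_{(0)}+c_{hom}$ with $c_{(0)}\in A^2_{(0)}(X)$ and $c_{hom}\in A^2_{hom}(X)$, the theorem kills $A\cdot D\cdot c_{hom}$, while $A\cdot D\cdot c_{(0)}\in A^6_{(0)}(X)=\QQ\,o_X$ by multiplicativity, so $b=A\cdot D\cdot c$ is a rational multiple of $o_X$ and is rationally trivial iff $\deg b=0$.

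\emph{Expected main obstacle.}
The hard part will be Step 3: checking that Voisin's spreading principle really applies in this relative setting --- that the spread of $\Gamma'$ stays fibrewise cohomologically trivial over the whole moduli space (for which one needs $L$, the exceptional class and the Chow--K\"unneth projectors to extend over moduli and be monodromy invariant), and that the finite--dimensionality / Bloch--Beilinson--type information available for $S^{[3]}$ suffices to pin down the ``moduli base'' contribution to $A_0(X)$. This is also where the extra factor $\cdot D$ does its work: it pushes the cycle into $A_0(X)$, where the base contribution collapses to $\QQ\,o_X$; on $A^5(X)$ that contribution is not controlled, which is why the cleaner statement $c\cdot A=0$ is not obtained by this route.
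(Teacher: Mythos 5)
Your Step 1 is essentially the paper's argument ($A=\phi^\ast(\text{point})$ is an intersection of divisors, hence lies in $A^3_{(0)}(X)$ by multiplicativity), and your deduction of the corollary is the same formal computation. The genuine gap is in Step 3, and it is not just a technical verification left open: the Lagrangian hypothesis is never actually used there. The only ``cohomological triviality'' you feed into the spreading argument is that $\Gamma'=\Gamma\circ(\pi^X_0+\pi^X_2)$ kills $H^4(X)$ --- but that holds for an \emph{arbitrary} correspondence $\Gamma$, since it is $\pi^X_0+\pi^X_2$ that annihilates $H^4$. If such an input, combined with generic definedness, really forced $\Gamma_\ast A^2_{hom}(X)\subset\QQ\,o_X$, then the identical argument with $A\cdot D$ replaced by a suitable power of an ample, generically defined divisor class $L_X$ would give $A^2_{hom}(X)\cdot L_X^{2m-2}=0$, contradicting the hard Lefschetz statement of theorem \ref{hard} and corollary \ref{hardhilb}, which says that multiplication by this power is injective on $A^2_{(2)}(X)\neq 0$. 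So your proposed principle proves too much. What is really needed, and what the paper uses, is the cohomological consequence of Lagrangian-ness: $\cup[A]$ (hence $\cup\,h_\ast(\AAA\cdot\DD_j)$ fibrewise) vanishes on $H^{2,0}$, so that, after conjugating by the relative projectors $\Pi_2$, $\Pi_{2m-2}$ and by the inverse hard Lefschetz correspondence $\Cc$ of theorem \ref{hard}, the relevant relative correspondence becomes fibrewise homologically trivial up to a cycle supported on $(\hbox{curve})\times(\hbox{divisor})$ (Lefschetz $(1,1)$); this cannot be reached from the action on $H^4$ alone.

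Two further points where your route diverges from what actually works. First, the spread is not performed over the abstract moduli space of genus $9$ $K3$'s, and finite-dimensionality of the motive plays no role: the paper works over the parameter space of Mukai models (complete intersections in $\PP_9=LG(3,6)$, which has trivial Chow groups --- this is exactly why $g\le 10$ is required), reduces correspondences on $\Ss^{3/B}\times_B\Ss^{3/B}$ to codimension--$2$ correspondences on $\Ss\times_B\Ss$ via the MCK correspondences $\Theta_i,\Xi_i$ of propositions \ref{prod2} and \ref{prod3}, and only then applies proposition \ref{voisin1}; the statement is also proved for the symmetric product $\Ss^{(3)}$ (theorem \ref{main}) and transported to $S^{[3]}$ through the relative Hilbert--Chow morphism, after writing $\DD=\sum_j\lambda_j\DD_j$ with $\DD_j$ effective, in general position with respect to $\AAA$ (this uses that $D$ spreads, since $X$ has Picard number $2$ for general $S$). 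Second, your identification $A^2_{hom}(X)=A^2_{(2)}(X)\oplus A^2_{(4)}(X)$ is not known: it presupposes $A^2_{(0)}(X)\cap A^2_{hom}(X)=0$ and the vanishing of the pieces of negative weight, which the paper explicitly notes is open for $S^{[m]}$ with $m>2$. The paper instead disposes of the $A^2_{(0)}\cap A^2_{hom}$ part directly by multiplicativity, since its product with $A\cdot D\in A^4_{(0)}(X)$ lands in $A^6_{(0)}(X)\cap A^6_{hom}(X)=0$, and then only the piece $A^2_{(2)}(X)$ needs the spread argument.
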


 Theorems \ref{main4} and \ref{main6} are deduced from a more general statement (theorem \ref{main}). Roughly speaking, this general statement says that if a subvariety $Z$ of $X$ exists relatively (i.e. there exists a subvariety $\Zz$ in the family $\XX\to B$ of all Hilbert schemes of $K3$ surfaces of fixed genus $g\le 10$, such that $Z$ is the restriction of $\Zz$ to the fibre $X$), then the behaviour of $Z$ in the cohomology ring of $X$ can be translated into consequences about the behaviour of $Z$ in the Chow ring of $X$. This type of statement, highlighting 
  the distinguished behaviour of cycles that exist relatively, is a typical feature of the technique of ``spread'' of algebraic cycles as developed by Voisin \cite{V0}, \cite{V1}, \cite{V8}, \cite{Vo}, \cite{Vo2}, which we employ to prove theorem \ref{main}.\footnote{NB: after the present paper was written, the paper \cite{FLV} appeared, which explores closely related questions. Both the present paper and \cite{FLV} are inspired by \cite{PSY}.}
  
 One ingredient in the proof that may be of independent interest is a ``hard Lefschetz'' type of statement for certain pieces of the Chow groups of Hilbert schemes:
 
 \begin{nonumbering}[=corollary \ref{hardhilb}] Let $S$ be a $K3$ surface of genus $g\le 10$, and let $X=S^{[m]}$ be the Hilbert scheme of length $m$ subschemes of $S$. 
 There exists an ample line bundle $L$ on $X$ such that
    \[  \cdot L^{m-1}\colon\ \ \ A^2_{(2)}(X)\ \to\ A^{2m}_{(2)}(X) \]
    is an isomorphism. 
      \end{nonumbering}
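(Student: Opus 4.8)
The plan is to reduce the statement to a Bloch-type rigidity statement for a single relatively-defined self-correspondence of $S$, and to obtain that from Theorem \ref{main}. \textbf{Reduction.} Combining the decomposition of the Chow motive of a Hilbert scheme of a surface (de Cataldo--Migliorini, Vial) with the multiplicative Chow--K\"unneth decomposition of Shen--Vial, one checks that both $A^2_{(2)}(X)$ and $A^{2m}_{(2)}(X)$ are isomorphic, via algebraic correspondences, to $A^2_{(2)}(S)=A^2_{hom}(S)=A^2(\mathfrak{t}_2(S))$: for the first this comes from the ``diagonal'' copy of the transcendental motive $\mathfrak{t}_2(S)$ inside $\mathfrak{h}(S^{(m)})\subseteq\mathfrak{h}(S^{[m]})$, for the second from the copy $\mathfrak{t}_2(S)\otimes\LLL^{2m-2}\subseteq\mathrm{Sym}^m\mathfrak{h}(S)$. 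Crucially, all of the cycles at play --- these identification correspondences, the projectors $\pi^X_{(s)}$ and $\pi_2^{tr}$, and an ample class $L$ (one may take $L=aH_m-\tfrac12 E$ with $H$ a relative polarisation of $\Ss\to B$ and $E$ the exceptional divisor) --- are restrictions of cycles defined over the base $B$ of the family $\XX\to B$ of Hilbert schemes of genus-$g$ $K3$ surfaces. Hence $\cdot L^{2m-2}\colon A^2_{(2)}(X)\to A^{2m}_{(2)}(X)$ is transported into the action on $A^2_{hom}(S)$ of a relatively-defined self-correspondence $\Gamma$ of $S$ which, by construction, factors through $\mathfrak{t}_2(S)$ (so $\Gamma=\pi_2^{tr}\circ\Gamma\circ\pi_2^{tr}$). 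It thus suffices to show that $\Gamma$ acts invertibly on $A^2(\mathfrak{t}_2(S))=A^2_{hom}(S)$.

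\textbf{Cohomological input.} As $L\in A^1_{(0)}(X)$ and the Shen--Vial decomposition is multiplicative, cup-product with $L^{2m-2}$ preserves the induced grading on $H^\ast(X)$; being an isomorphism $H^2(X)\xrightarrow{\ \sim\ }H^{4m-2}(X)$ by Hard Lefschetz, it restricts to an isomorphism on the graded piece $H^2(X)_{(2)}=H^2_{tr}(X)\cong H^2_{tr}(S)$. So $[\Gamma]$ acts invertibly on the finite-dimensional space $H^2_{tr}(S)$, hence by Cayley--Hamilton its inverse there is a $\QQ$-polynomial in $[\Gamma]$. The corresponding $\QQ$-linear combination $\Gamma'$ of the iterates $\Gamma^{\circ k}$ ($k\ge 1$) and $\pi_2^{tr}$ is again a relatively-defined self-correspondence factoring through $\mathfrak{t}_2(S)$, and satisfies $\Gamma'\circ\Gamma=\Gamma\circ\Gamma'=\pi_2^{tr}$ in $H^4(S\times S)$.

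\textbf{Conclusion.} The relatively-defined cycles $\Gamma'\circ\Gamma-\pi_2^{tr}$ and $\Gamma\circ\Gamma'-\pi_2^{tr}$ on $S\times S$ are homologically trivial; by Theorem \ref{main} --- which, for $g\le 10$, upgrades homological triviality of relatively-defined cycles to rational triviality --- they vanish in $A^2(S\times S)$. Hence $\Gamma'\circ\Gamma=\Gamma\circ\Gamma'=\pi_2^{tr}$ in $A^2(S\times S)$, so $\Gamma$ is invertible on $A^2(\mathfrak{t}_2(S))=A^2_{hom}(S)$, and therefore $\cdot L^{2m-2}\colon A^2_{(2)}(X)\to A^{2m}_{(2)}(X)$ is an isomorphism. (To keep strictly within the setting of Theorem \ref{main} one transports $\Gamma$ and $\Gamma'$ back through the identifications of the reduction step and argues on $\XX\to B$ itself.)

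\textbf{Expected main obstacle.} The single substantive ingredient is the last step --- passing from ``homologically trivial'' to ``rationally trivial'' for relatively-defined cycles on the self-product of the universal genus-$g$ $K3$ surface. This is a Franchetta-type statement, and it is exactly what pins the result to the range $g\le 10$, in which Voisin's spread-of-cycles method --- packaged here in Theorem \ref{main} --- provides the required rigidity. By comparison, the motivic bookkeeping of the reduction (that everything spreads over $B$, and that $\Gamma$ factors through $\mathfrak{t}_2(S)$) and the Hard Lefschetz input are routine.
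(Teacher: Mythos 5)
Your overall skeleton coincides with the paper's: reduce, via relatively defined correspondences (de Cataldo--Migliorini plus the product MCK decomposition --- this is exactly what propositions \ref{prod2} and \ref{prod3} supply), to the invertibility on $A^2_{hom}(S)$ of a relatively defined self-correspondence $\Gamma$ of $S$; feed in hard Lefschetz on cohomology; then use a spread argument over the Mukai models to upgrade to Chow groups. This is precisely how theorem \ref{hard} and its corollary \ref{hardhilb} are proved (your exponent $L^{2m-2}$ is the degree-correct reading of the stated $L^{m-1}$). Your Cayley--Hamilton construction of the inverse is a genuine and rather pleasant variant: the paper instead invokes L.~Fu's hard Lefschetz for big line bundles modulo coniveau (proposition \ref{input}), plus semisimplicity of numerical motives, to get fibrewise inverse correspondences $C_b$, and then a Hilbert-scheme argument to glue these into a relative correspondence $\Cc$; your polynomial-in-$\Gamma$ inverse is automatically relative, at the small price of checking that the characteristic-polynomial coefficients of $[\Gamma_b]$ do not depend on $b$ (they do not: the fibrewise cohomology class of a relative correspondence is a flat section) and that $\pi_2^{tr}$ is defined relatively (it is, over the Picard-rank-one locus, which suffices).

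The genuine problem is the citation carrying the whole weight of the final step. Theorem \ref{main} is not the statement you attribute to it: it concerns a codimension $2m-2$ cycle on $\Ss^{(m)}$ and asserts that a certain composite map on $A^2_{(2)}$ vanishes under a cohomological hypothesis; it is not a ``homologically trivial relatively defined cycles are rationally trivial'' statement. Worse, its proof uses theorem \ref{hard} (the $S^m$-version of the present corollary) through the correspondence $\Cc$, so invoking it here is circular. The input you actually need --- and the one the paper uses --- is Voisin's spread result, proposition \ref{voisin1}, applied to the relative correspondences $\Gamma'\circ\Gamma-\pi_2^{tr}$ and $\Gamma\circ\Gamma'-\pi_2^{tr}$ in $A^2(\Ss\times_B\Ss)$. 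Note also that this yields less than you claim: one does not get vanishing in $A^2(S\times S)$ (that would be a strong form of the generalized Franchetta conjecture for the relative square, not proved in this paper), only that these cycles agree, fibrewise, with restrictions of cycles coming from $\PP_g\times\PP_g$. Since $\PP_g$ has trivial Chow groups, such restrictions are sums of product correspondences and therefore act as zero on $A^2_{hom}(S)$ --- which is all you need for invertibility on $A^2_{hom}(S)$. With that substitution and the correspondingly weakened conclusion, your argument closes, and is then essentially the paper's proof with a different (and arguably cleaner) construction of the inverse correspondence.
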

  
  This is also proven using the method of ``spread''. It would be interesting to prove the results of this note for other hyperk\"ahler varieties. Unfortunately, our method runs into problems for Hilbert schemes of high genus $K3$ surfaces (this is due to the lack of Mukai models for high genus $K3$ surfaces).

 \vskip0.6cm

\begin{convention} In this article, the word {\sl variety\/} will refer to a reduced irreducible scheme of finite type over $\C$. A {\sl subvariety\/} is a (possibly reducible) reduced subscheme which is equidimensional. 

{\bf All Chow groups will be with rational coefficients}: we will always write $A_j(X)$ for the Chow group of $j$--dimensional cycles on $X$ with $\QQ$--coefficients; for $X$ smooth of dimension $n$ the notations $A_j(X)$ and $A^{n-j}(X)$ are used interchangeably. 

The notations $A^j_{hom}(X)$, $A^j_{AJ}(X)$ will be used to indicate the subgroups of homologically trivial, resp. Abel--Jacobi trivial cycles.
For a morphism $f\colon X\to Y$, we will write $\Gamma_f\in A_\ast(X\times Y)$ for the graph of $f$.
The contravariant category of Chow motives (i.e., pure motives with respect to rational equivalence as in \cite{Sc}, \cite{MNP}) will be denoted $\MM_{\rm rat}$.


We will use $H^j(X)$ 
to indicate singular cohomology $H^j(X,\QQ)$.
\end{convention}

\section{Preliminaries}

\subsection{Quotient varieties}
\label{ssquot}

\begin{definition} A {\em projective quotient variety\/} is a variety
  \[ X=Y/G\ ,\]
  where $Y$ is a smooth projective variety and $G\subset\hbox{Aut}(Y)$ is a finite group.
  \end{definition}
  
 \begin{proposition}[Fulton \cite{F}]\label{quot} Let $X$ be a projective quotient variety of dimension $n$. Let $A^\ast(X)$ denote the operational Chow cohomology ring. The natural map
   \[ A^i(X)\ \to\ A_{n-i}(X) \]
   is an isomorphism for all $i$.
   \end{proposition}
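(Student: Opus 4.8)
The plan is to exploit the fact that $X = Y/G$ is globally the quotient of a smooth projective variety by a finite group, and to transfer the known comparison isomorphism on $Y$ down to $X$ via a transfer (averaging) argument using that we work with $\QQ$-coefficients. First I would recall the setup from Fulton's book: for an arbitrary variety $X$ there is always an operational Chow cohomology ring $A^\ast(X)$, equipped with a cap-product action $A^i(X) \otimes A_k(X) \to A_{k-i}(X)$, and in particular a canonical Poincar\'e-duality-type homomorphism
\[
A^i(X)\ \longrightarrow\ A_{n-i}(X),\qquad c\ \longmapsto\ c\cap [X],
\]
where $[X] \in A_n(X)$ is the fundamental class. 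The claim is that this map is an isomorphism when $X$ is a projective quotient variety.

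The key steps, in order, would be as follows. (1) Let $\pi\colon Y \to X$ be the quotient map, which is finite and surjective of degree $d = |G|$. Pullback of operational classes gives a ring homomorphism $\pi^\ast\colon A^\ast(X) \to A^\ast(Y)$, and proper pushforward gives $\pi_\ast\colon A_\ast(Y) \to A_\ast(X)$; moreover these are compatible with cap products in the sense that $\pi_\ast(\pi^\ast(c) \cap \alpha) = c \cap \pi_\ast(\alpha)$ (projection formula for operational Chow cohomology, Fulton \cite{F}). (2) Since $Y$ is smooth and projective, the comparison map $A^i(Y) \to A_{n-i}(Y)$, $c \mapsto c \cap [Y]$, is an isomorphism — this is the classical Poincar\'e duality for Chow groups of smooth varieties. (3) One has $\pi_\ast[Y] = d\,[X]$ and $\pi^\ast[X]$ relates to $[Y]$; combining the projection formula with the smooth case on $Y$, the composite $A^i(X) \xrightarrow{\cap [X]} A_{n-i}(X)$ can be analyzed after applying $\pi^\ast$ and $\pi_\ast$. (4) Crucially, because $G$ acts on $Y$ and $X = Y/G$, both $A^\ast(Y)$ and $A_\ast(Y)$ carry $G$-actions, and after tensoring with $\QQ$ the maps $\pi^\ast$ and $\frac1d \pi_\ast$ identify $A^\ast(X)$ with the invariant part $A^\ast(Y)^G$ and $A_\ast(X)$ with $A_\ast(Y)^G$ respectively (this is where rational coefficients are essential, to invert $d$). (5) Since the smooth comparison isomorphism on $Y$ is $G$-equivariant, it restricts to an isomorphism $A^i(Y)^G \xrightarrow{\ \sim\ } A_{n-i}(Y)^G$, and transporting through the identifications of step (4) yields the desired isomorphism $A^i(X) \xrightarrow{\ \sim\ } A_{n-i}(X)$. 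A diagram chase confirms that the transported map agrees, up to the harmless nonzero scalar coming from $\pi_\ast\pi^\ast = d$, with the canonical cap-product-with-$[X]$ map, which is all that is needed.

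The main obstacle I expect is step (4): proving carefully that $\pi^\ast\colon A^\ast(X)_{\QQ} \to A^\ast(Y)^G_{\QQ}$ is an isomorphism (and likewise for Chow homology groups $A_\ast$). The homology statement $\pi_\ast\colon A_\ast(Y)^G_{\QQ} \xrightarrow{\sim} A_\ast(X)_{\QQ}$ is relatively standard — the composite $A_\ast(X) \xrightarrow{\pi^\ast} A_\ast(Y) \xrightarrow{\pi_\ast} A_\ast(X)$ is multiplication by $d$ (here $\pi^\ast$ on cycles is defined since $\pi$ is flat, being finite between the normal $X$ and $Y$; alternatively one uses that $\pi$ is finite flat as $Y$ is Cohen--Macaulay and $X$ normal), and $\pi_\ast\pi^\ast$ in the other order lands in the $G$-invariants and is again multiplication by $d$. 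The subtler point is the cohomology side: one must check that the operational Chow cohomology of the quotient is computed by $G$-invariants of the operational Chow cohomology upstairs. This follows because an operational class on $X$ is a compatible family of operations on all morphisms $T \to X$, and any such can be pulled back along $Y \to X$ to a $G$-invariant operational class on $Y$, with the averaging operator $\frac1d\sum_{g\in G} g^\ast$ providing the inverse construction — but verifying the compatibility conditions over all base changes is the technical heart, and it is exactly the content of Fulton's treatment. Once this identification is in hand, the rest is a short equivariance argument reducing everything to the smooth projective case.
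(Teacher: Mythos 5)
The paper gives no argument of its own for this proposition: its proof is the single citation of \cite[Example 17.4.10]{F}, and your sketch is essentially the standard transfer argument underlying that example --- identify $A^\ast(X)_\QQ$ with the invariants $A^\ast(Y)^G_\QQ$ and $A_\ast(X)_\QQ$ with $A_\ast(Y)^G_\QQ$, then transport Poincar\'e duality for the smooth projective $Y$ through these identifications using the projection formula and the $G$-equivariance of $\cap[Y]$. Your deferral of the operational-side compatibility checks (that an averaged operational class on $Y$ descends to a bivariant class on $X$) to Fulton is no worse than what the paper itself does, so in substance you follow the same route as the cited source.

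One point in your write-up is genuinely wrong and should be repaired. You justify the existence of $\pi^\ast$ on cycle level by asserting that $\pi$ is flat, ``being finite between the normal $X$ and $Y$'', or ``finite flat as $Y$ is Cohen--Macaulay and $X$ normal''. Miracle flatness requires the \emph{target} to be regular, not merely normal: a finite morphism from a Cohen--Macaulay variety is flat over a smooth base, but here the singular member of the pair is the target $X$. Concretely, $\mathbb{A}^2\to\mathbb{A}^2/\{\pm1\}$ is not flat: the fibre over the singular point has length $3$ while the degree is $2$. This does not derail your argument, because flatness is not needed: with $\QQ$-coefficients the pullback $\pi^\ast\colon A_\ast(X)_\QQ\to A_\ast(Y)_\QQ$ is defined directly on cycles (weighting the components of $\pi^{-1}(V)$ by multiplicities coming from the stabilizers), lands in the $G$-invariants, and satisfies $\pi_\ast\pi^\ast=|G|\cdot\mathrm{id}$ and $\pi^\ast\pi_\ast=\sum_{g\in G}g_\ast$; this is \cite[Example 1.7.6]{F} and is what you should invoke in place of flatness.
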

   
   \begin{proof} This is \cite[Example 17.4.10]{F}.
      \end{proof}

\begin{remark} It follows from proposition \ref{quot} that the formalism of correspondences goes through unchanged for projective quotient varieties (this is also noted in \cite[Example 16.1.13]{F}). We can thus consider motives $(X,p,0)\in\MM_{\rm rat}$, where $X$ is a projective quotient variety and $p\in A^n(X\times X)$ is a projector. For a projective quotient variety $X=Y/G$, one readily proves (using Manin's identity principle) that there is an isomorphism
  \[  h(X)\cong h(Y)^G:=(Y,\Delta^G_Y,0)\ \ \ \hbox{in}\ \MM_{\rm rat}\ ,\]
  where $\Delta^G_Y$ denotes the idempotent ${1\over \vert G\vert}{\sum_{g\in G}}\Gamma_g$.  
  \end{remark}

 \subsection{The Fourier decomposition}

 \begin{theorem}[Shen--Vial \cite{SV}]\label{S2} Let $S$ be a $K3$ surface, and let $X=S^{[2]}$ be the Hilbert scheme of length $2$ subschemes of $S$. There is a decomposition 
   \[ A^i(X) =\bigoplus_{\stackrel{0\le j\le i}{j\ {\scriptstyle even}}} A^i_{(j)}(X)\ ,\]
   with the following properties:
   

   \noindent
   (\rom1) $A^\ast_{(\ast)}(X)$ is a bigraded ring;
   
 \noindent
   (\rom2) $A^i_{(j)}(X)\subset A^i_{hom}(X)$ for $j>0$.
   
   \end{theorem}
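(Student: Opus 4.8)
My plan is to recognise this statement as (the fourfold case of) the existence of a multiplicative Chow--K\"unneth (MCK) decomposition on $X=S^{[2]}$, and to build one by propagating structure from the $K3$ surface $S$. The input from $S$ is that it carries an MCK decomposition: the Chow--K\"unneth projectors $\pi^S_0,\pi^S_2,\pi^S_4$, refined via $\pi^S_2=\pi^S_{2,\mathrm{alg}}+\pi^S_{2,\mathrm{tr}}$ so that $\mathfrak h(S)=\mathbf 1\oplus\mathfrak h^2_{\mathrm{alg}}(S)\oplus\mathfrak t(S)\oplus\mathbf 1(-2)$, are multiplicative --- this is a reformulation of the Beauville--Voisin theorem \cite{BV} (on $S$, the product of two divisors and $c_2(S)$ are rational multiples of the canonical $0$-cycle $o_S$). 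Next I would use the presentation $X=\bigl(\mathrm{Bl}_{\Delta_S}(S\times S)\bigr)/\Sy_2$, which via the blow-up formula together with the quotient-variety formalism (valid here by Proposition \ref{quot}) gives an isomorphism of Chow motives
\[ \mathfrak h(X)\ \cong\ \operatorname{Sym}^2\mathfrak h(S)\ \oplus\ \mathfrak h(S)(-1)\ . \]
Tensoring up the projectors of $\mathfrak h(S)$ yields a Chow--K\"unneth decomposition $\{\pi^X_k\}$ of $X$ with all odd components $0$ (as $S$ has no odd cohomology), and one sets $A^i_{(j)}(X):=(\pi^X_{2i-j})_*A^i(X)=CH^i\bigl(\mathfrak h^{2i-j}(X)\bigr)$.

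Granting this, the direct sum decomposition is immediate from $\mathfrak h(X)=\bigoplus_k\mathfrak h^{2k}(X)$, and property (\rom2) is formal: when $j>0$ the correspondence $\pi^X_{2i-j}$ acts as zero on $H^{2i}(X)$, so (the cycle class map being compatible with correspondences) $A^i_{(j)}(X)=(\pi^X_{2i-j})_*A^i(X)$ has trivial image in $H^{2i}(X)$, i.e. $A^i_{(j)}(X)\subseteq A^i_{hom}(X)$. The restriction $j\le i$ in the statement is the vanishing of the out-of-range pieces; for a fourfold this is a short finite check (e.g. $A^2_{(4)}(X)=CH^2(\mathbf 1)=0$, and $A^3_{(4)}(X)\cong CH^1(\mathfrak h^6(X))=0$ by hard Lefschetz, since $A^1(X)$ --- spanned by divisors --- lies in weight $2$).

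Everything therefore reduces to property (\rom1), multiplicativity: that $\pi^X_c\circ\delta_X\circ(\pi^X_a\otimes\pi^X_b)=0$ in $A^8(X^3)$ whenever $c\neq a+b$, where $\delta_X\in A^8(X^3)$ is the small diagonal. This is where I expect all the real work to lie. The first half is formal: MCK is stable under products and the $\Sy_2$-projector commutes with the product structure, so $\operatorname{Sym}^2\mathfrak h(S)=(\mathfrak h(S)^{\otimes 2})^{\Sy_2}$ inherits an MCK decomposition from $S$. The crux is to handle the remaining summand $\mathfrak h(S)(-1)$, i.e. the exceptional divisor $E\subset X$: one has to compute, at the level of cycles (not just cohomology classes), the products $E\cdot E$ and $E\cdot(\text{pullbacks from }\operatorname{Sym}^2 S)$ in terms of the Beauville--Voisin data on $S$, and then check that every cross-term of the form $\pi^X_c\circ\delta_X\circ(\pi^X_a\otimes\pi^X_b)$ with $c\neq a+b$ vanishes. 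This is the Chow-theoretic refinement of the Lehn--Sorger/Nakajima description of the ring $H^\ast(S^{[2]})$. Equivalently --- and this is the route of \cite{SV} --- one introduces the canonical self-correspondence $L\in A^2(X\times X)$ attached to the Beauville--Bogomolov form, written out explicitly on $S^{[2]}$ as a combination of the incidence cycle $\{(\xi,\xi'):\operatorname{supp}\xi\cap\operatorname{supp}\xi'\neq\emptyset\}$ with pullbacks of $\Delta_S$ and $o_S$, and establishes the quadratic relations $L$ satisfies; multiplicativity, and with it the whole theorem, then drops out. In either guise the one genuinely non-formal ingredient is the Beauville--Voisin theorem for $S$, and the obstacle is verifying that it survives the blow-up and the $\Sy_2$-quotient --- a lengthy but, in principle, routine computation in the Chow rings of $X\times X$ and $X^3$.
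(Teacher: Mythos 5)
The paper offers no argument for Theorem \ref{S2} to compare against: its ``proof'' is a citation of \cite[Theorem 2]{SV} together with the existence of the cycle $L\in A^2(X\times X)$ lifting the Beauville--Bogomolov class (\cite[Part 2]{SV}). Your outline is a faithful reconstruction of the strategy of the sources being cited: propagating the Beauville--Voisin MCK structure of $S$ through the motivic decomposition $h(S^{[2]})\cong \operatorname{Sym}^2 h(S)\oplus h(S)(-1)$ (product, blow-up along the diagonal, $\Sy_2$-quotient) is exactly Vial's route \cite{V6}, which the paper itself invokes in Theorem \ref{charles} and which agrees with the Fourier bigrading of \cite{SV} by \cite[Theorem 15.8]{SV}; your alternative description via $L$ and its quadratic relations is the actual route of \cite{SV}. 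Your formal reductions are sound: (\rom2) follows because $\pi^X_{2i-j}$ annihilates $H^{2i}(X)$ for $j\neq 0$, and the out-of-range vanishing is indeed a finite check from the motive decomposition for the fourfold --- though note the stated range imposes both $j\le i$ and $j\ge 0$, and your examples only treat $j>i$; the $j<0$ pieces (e.g.\ $A^2_{(-2)}(X)=(\pi^X_6)_\ast A^2(X)$, handled via $h^6(X)\cong h^2(S)(-2)\oplus\LLL^{\otimes 3}$) vanish by the same kind of check for $m=2$, and this is precisely the point the paper flags as special to the Hilbert square. The one substantive caveat is the one you name yourself: the multiplicativity of the decomposition (equivalently the Chow-level computation of products involving the exceptional divisor, or the quadratic relations for $L$) is the entire mathematical content of \cite{SV}/\cite{V6} and is only gestured at in your sketch, so as a standalone argument it is a correct plan rather than a proof --- which puts it on essentially the same footing as the paper, which defers that same work to \cite{SV}.
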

 
 \begin{proof} This is essentially \cite[Theorem 2]{SV}, combined with the fact that there is a class $L\in A^2(X\times X)$ lifting the Beauville--Bogomolov class and satisfying certain equalities, which is \cite[Part 2]{SV}.
 \end{proof}

 \subsection{MCK decomposition}
\label{ss1}

\begin{definition}[Murre \cite{Mur}] Let $X$ be a smooth projective variety of dimension $n$. We say that $X$ has a {\em CK decomposition\/} if there exists a decomposition of the diagonal
   \[ \Delta_X= \pi^X_0+ \pi^X_1+\cdots +\pi^X_{2n}\ \ \ \hbox{in}\ A^n(X\times X)\ ,\]
  such that the $\pi^X_i$ are mutually orthogonal idempotents in $A^n(X\times X)$ and $(\pi^X_i)_\ast H^\ast(X)= H^i(X)$.
  
  (NB: ``CK decomposition'' is shorthand for ``Chow--K\"unneth decomposition''.)
\end{definition}

\begin{remark} The existence of a CK decomposition for any smooth projective variety is part of Murre's conjectures \cite{Mur}, \cite{J2}, \cite{J4}. 
\end{remark}

\begin{definition}[Shen--Vial \cite{SV}] Let $X$ be a smooth projective variety of dimension $n$. Let $\Delta_X^{sm}\in A^{2n}(X\times X\times X)$ be the class of the small diagonal
  \[ \Delta_X^{sm}:=\bigl\{ (x,x,x)\ \vert\ x\in X\bigr\}\ \subset\ X\times X\times X\ .\]
  An {\em MCK decomposition\/} is a CK decomposition $\{\pi^X_i\}$ of $X$ that is {\em multiplicative\/}, i.e. it satisfies
  \[ \pi^X_k\circ \Delta_X^{sm}\circ (\pi^X_i\times \pi^X_j)=0\ \ \ \hbox{in}\ A^{2n}(X\times X\times X)\ \ \ \hbox{for\ all\ }i+j\not=k\ .\]
  
 (NB: ``MCK decomposition'' is shorthand for ``multiplicative Chow--K\"unneth decomposition''.) 
  
 A {\em weak MCK decomposition\/} is a CK decomposition $\{\pi^X_i\}$ of $X$ that satisfies
    \[ \Bigl(\pi^X_k\circ \Delta_X^{sm}\circ (\pi^X_i\times \pi^X_j)\Bigr){}_\ast (a\times b)=0 \ \ \ \hbox{for\ all\ } a,b\in\ A^\ast(X)\ .\]
  \end{definition}
  
  \begin{remark} The small diagonal (seen as a correspondence from $X\times X$ to $X$) induces the {\em multiplication morphism\/}
    \[ \Delta_X^{sm}\colon\ \  h(X)\otimes h(X)\ \to\ h(X)\ \ \ \hbox{in}\ \MM_{\rm rat}\ .\]
 Suppose $X$ has a CK decomposition
  \[ h(X)=\bigoplus_{i=0}^{2n} h^i(X)\ \ \ \hbox{in}\ \MM_{\rm rat}\ .\]
  By definition, this decomposition is multiplicative if for any $i,j$ the composition
  \[ h^i(X)\otimes h^j(X)\ \to\ h(X)\otimes h(X)\ \xrightarrow{\Delta_X^{sm}}\ h(X)\ \ \ \hbox{in}\ \MM_{\rm rat}\]
  factors through $h^{i+j}(X)$.
  
  If $X$ has a weak MCK decomposition, then setting
    \[ A^i_{(j)}(X):= (\pi^X_{2i-j})_\ast A^i(X) \ ,\]
    one obtains a bigraded ring structure on the Chow ring: that is, the intersection product sends $A^i_{(j)}(X)\otimes A^{i^\prime}_{(j^\prime)}(X) $ to  $A^{i+i^\prime}_{(j+j^\prime)}(X)$.
    
      It is expected (but not proven !) that for any $X$ with a weak MCK decomposition, one has
    \[ A^i_{(j)}(X)\stackrel{??}{=}0\ \ \ \hbox{for}\ j<0\ ,\ \ \ A^i_{(0)}(X)\cap A^i_{hom}(X)\stackrel{??}{=}0\ ;\]
    this is related to Murre's conjectures B and D, that have been formulated for any CK decomposition \cite{Mur}.

  The property of having an MCK decomposition is severely restrictive, and is closely related to Beauville's ``(weak) splitting property'' \cite{Beau3}. For more ample discussion, and examples of varieties with an MCK decomposition, we refer to \cite[Section 8]{SV}, as well as \cite{V6}, \cite{SV2}, \cite{FTV}, \cite{LV}.
    \end{remark}
    
  

\subsection{Relative MCK for $S^{m}$ and for $S^{(m)}$}


\begin{theorem}[Vial \cite{V6}]\label{charles} Let $S$ be a projective $K3$ surface, and let $X=S^{[m]}$ be the Hilbert scheme of length $m$ subschemes of $S$. Then $X$ has a self--dual MCK decomposition $\{ \Pi^X_i\}$. In particular, $A^\ast(X)=A^\ast_{(\ast)}(X)$ is a bigraded ring, where
  \[ A^i(X)=\bigoplus_{j= 2i-2n }^i A^i_{(j)}(X)\ ,\]
  and $A^i_{(j)}(X)=0$ for $j$ odd. In case $m=2$, the bigrading $A^\ast_{(\ast)}(X)$ coincides with the one given by the Fourier decomposition of theorem \ref{S2}.
\end{theorem}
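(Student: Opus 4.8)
The plan is to prove Theorem \ref{charles} by transporting a multiplicative Chow--K\"unneth decomposition from the symmetric power $S^{(m)}$ to the Hilbert scheme $S^{[m]}$, exploiting the fact that both are controlled by the motive of $S^m$ and the $\Sy_m$--action on it. The starting point is that $S$ itself carries a canonical self-dual MCK decomposition $\{\pi^S_0,\pi^S_2,\pi^S_4\}$: for a $K3$ surface one has the Beauville--Voisin zero-cycle $\oo\in A^2(S)$, and setting $\pi^S_0=\oo\times S$, $\pi^S_4=S\times\oo$, $\pi^S_2=\Delta_S-\pi^S_0-\pi^S_4$ gives mutually orthogonal projectors; the multiplicativity (the vanishing $\pi^S_k\circ\Delta^{sm}_S\circ(\pi^S_i\times\pi^S_j)=0$ for $i+j\ne k$) is the content of the Beauville--Voisin relation for $K3$ surfaces. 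The key structural fact I would use is that MCK decompositions are stable under two operations: (a) taking products, so that $S^m$ inherits a self-dual MCK decomposition $\{\Pi^{S^m}_i\}$ whose projectors are sums of exterior products $\pi^S_{i_1}\times\cdots\times\pi^S_{i_m}$; and (b) passing to the quotient by a finite group acting by permutations, provided the group action respects the grading.

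First I would record that the $\Sy_m$--action on $S^m$ permutes the factors and hence commutes with the total projector grading, since the individual $\pi^S_i$ are $\Sy_m$--invariant as a collection. Concretely, the averaged projectors
\[
  \Pi^{(m)}_i \;:=\; \frac{1}{m!}\sum_{\sigma\in\Sy_m}\Gamma_\sigma\circ\Pi^{S^m}_i\circ\Gamma_{\sigma^{-1}}
\]
descend to a self-dual MCK decomposition of the quotient $S^{(m)}=S^m/\Sy_m$, viewed as a projective quotient variety in the sense of subsection \ref{ssquot}; here Proposition \ref{quot} and the ensuing remark are exactly what allow me to run the correspondence formalism and Manin's identity principle on the singular variety $S^{(m)}$. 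Multiplicativity descends because the small diagonal of $S^{(m)}$ pulls back to the $\Sy_m$--symmetrization of the small diagonal of $S^m$, so the defining vanishing for $\{\Pi^{S^m}_i\}$ averages to the defining vanishing for $\{\Pi^{(m)}_i\}$.

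The main step, and the place where I expect the real work to lie, is passing from $S^{(m)}$ to the Hilbert scheme $X=S^{[m]}$ along the Hilbert--Chow morphism $\rho\colon X\to S^{(m)}$. The strategy is to produce an explicit correspondence-level splitting of $h(X)$ refining the pullback of the decomposition on $S^{(m)}$. The geometric input is that $\rho$ is a resolution whose exceptional behavior is governed by partitions of $m$, and that the motive of $S^{[m]}$ admits a description (after de Cataldo--Migliorini and Vial) as a direct sum, indexed by partitions $\lambda$ of $m$, of Tate twists of motives of symmetric products $S^{(\lambda)}$ of $S$. Each such summand $S^{(\lambda)}=\prod_i S^{(a_i)}$ is itself a product of quotient varieties to which steps (a) and (b) apply, so each carries its own self-dual MCK decomposition; I would assemble $\{\Pi^X_i\}$ as the direct sum of these, with the Tate twists accounting for the cohomological shifts. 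The delicate points are: verifying that the de Cataldo--Migliorini correspondences realizing the isomorphism $h(S^{[m]})\cong\bigoplus_\lambda h(S^{(\lambda)})(\ast)$ are compatible with the gradings, i.e. that they intertwine the individual MCK projectors on the two sides rather than merely giving an abstract isomorphism; and confirming that the induced decomposition is multiplicative, which requires understanding how the intersection product on $X$ interacts with the partition-indexed summands under this isomorphism.

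The final assertions --- that $A^i(X)=\bigoplus_{j=2i-2n}^i A^i_{(j)}(X)$ with $A^i_{(j)}(X)=0$ for odd $j$, and that for $m=2$ this agrees with the Fourier decomposition of Theorem \ref{S2} --- I would then extract as formal consequences. The range of the index $j$ and the vanishing in odd degrees follow from the corresponding facts on $S$ (where only $\pi^S_0,\pi^S_2,\pi^S_4$ occur, so only even cohomological degrees appear) propagated through products, quotients, and the partition sum, together with the self-duality $\Pi^X_i={}^t\Pi^X_{2n-i}$ which holds summand-by-summand. For the comparison in the case $m=2$, I would check that the projectors $\Pi^X_i$ constructed above and the Fourier projectors of \cite{SV} both refine the Chow--K\"unneth decomposition of $X=S^{[2]}$ and agree on the action on Chow groups; since the $m=2$ Hilbert scheme has a particularly simple partition structure (only the partitions $(2)$ and $(1,1)$, the latter contributing the exceptional divisor), this comes down to matching the explicit class $L\in A^2(X\times X)$ lifting the Beauville--Bogomolov form against the symmetrized projectors, which is a direct computation rather than a conceptual obstacle.
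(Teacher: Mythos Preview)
The paper's own ``proof'' of this statement is a bare citation: it refers to \cite[Theorems 1 and 2]{V6} for the existence of the self-dual MCK decomposition, and to \cite[Theorem 15.8]{SV} for the identification with the Fourier decomposition when $m=2$. There is no argument given in the paper itself.

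Your proposal is not so much a different proof as a sketch of what lies behind the cited reference \cite{V6}: the product MCK on $S^m$ from the Beauville--Voisin projectors, descent to $S^{(m)}$, and transport to $S^{[m]}$ via the de Cataldo--Migliorini motivic decomposition indexed by partitions. As an outline this is accurate and matches Vial's strategy. However, you correctly flag the two ``delicate points'' and then leave them open, and these are precisely the substance of \cite{V6}. In particular, multiplicativity on $S^{[m]}$ is \emph{not} a formal consequence of having an MCK on each summand $S^{(\lambda)}$ together with a motivic direct-sum decomposition: one must check that the de Cataldo--Migliorini correspondences intertwine the small diagonals (equivalently, that the ring structure on $h(S^{[m]})$ is compatible with the grading induced from the summands), and this requires a genuine argument about how intersection on $S^{[m]}$ is expressed through the incidence correspondences. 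Your sketch does not supply this, so as written it is an outline with the key lemma missing rather than a complete proof. For the purposes of this paper, the citation to \cite{V6} is what is intended; if you want to make the argument self-contained, the multiplicativity verification is the gap to fill.
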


\begin{proof} This is \cite[Theorems 1 and 2]{V6}. The last statement is \cite[Theorem 15.8]{SV}, plus the fact that for $m=2$ the MCK decomposition of \cite{V6} coincides with the one of \cite{SV}.
\end{proof}

\begin{remark} Let $X$ be as in theorem \ref{charles} and suppose $m=2$ (i.e. $X=S^{[2]}$ is a hyperk\"ahler fourfold). Then the bigrading $A^\ast_{(\ast)}(X)$ of theorem \ref{charles} has an interesting alternative description in terms of a Fourier operator on Chow groups (theorem \ref{S2}). For $m>2$, there is no such ``Fourier operator'' description of the bigrading $A^\ast_{(\ast)}(S^{[m]})$; the bigrading is defined exclusively by an MCK decomposition.

Another point particular to $m=2$ is that (thanks to \cite{SV}) we know that
  \[ A^i_{(j)}(S^{[2]})=0\ \ \ \forall j<0\ .\]
  This vanishing statement is (conjecturally true but) open for $S^{[m]}$ with $m>2$.
\end{remark}

\begin{notation} Let $\Ss\to B$ be a family (i.e., a smooth projective morphism). For $r\in\NN$, we write $\Ss^{r/B}$ for the relative $r$--fold fibre product
  \[ \Ss^{r/B}:= \Ss\times_B \Ss\times_B \cdots \times_B \Ss \ \]
  ($r$ copies of $\Ss$).
  \end{notation}

\begin{proposition}\label{prod} Let $\Ss\to B$ be a family of $K3$ surfaces. There exist relative correspondences 
   \[  \Pi_j^{\Ss^{m/B}}\ \ \in A^{2m}(\Ss^{m/B}\times \Ss^{m/B})\ \ \  (j=0,2,4,\ldots, 4m)\ ,\]
  such that for each $b\in B$, the restriction
  \[ \Pi_j^{(S_b)^m} := \Pi_j^{\Ss^{m/B}}\vert_{(S_b)^{2m}}\ \ \ \in A^{2m}((S_b)^m\times (S_b)^m)\]
  defines a self--dual MCK decomposition for $(S_b)^m$.
  \end{proposition}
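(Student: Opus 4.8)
The plan is to build the relative decomposition by hand, as the $m$--fold tensor product of the Beauville--Voisin decomposition of a single $K3$ surface, and then to relativise. Since $\Ss\to B$ is smooth of relative dimension $2$, the relative tangent bundle $T_{\Ss/B}$ has rank $2$, and I would set $\oo:=\tfrac{1}{24}\,c_2(T_{\Ss/B})\in A^2(\Ss)$; its restriction $\oo\vert_{S_b}$ is the Beauville--Voisin class $o_{S_b}\in A^2(S_b)$ for every $b\in B$. Using $\oo$ and the relative diagonal $\Delta_{\Ss/B}\subset\Ss\times_B\Ss$, I would define a relative Chow--K\"unneth decomposition of $\Ss/B$ by
\[ \Pi^{\Ss/B}_0:=p_1^\ast(\oo)\ ,\qquad \Pi^{\Ss/B}_4:=p_2^\ast(\oo)\ ,\qquad \Pi^{\Ss/B}_2:=[\Delta_{\Ss/B}]-\Pi^{\Ss/B}_0-\Pi^{\Ss/B}_4 \]
in $A^2(\Ss\times_B\Ss)$, where $p_1,p_2$ are the two projections. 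Because l.c.i. pull--back and flat pull--back commute in a fibre square (Fulton, Ch.~6), the restriction of $\Pi^{\Ss/B}_i$ to $S_b\times S_b$ is exactly the classical Beauville--Voisin projector $\pi^{S_b}_i$; thus $\{\Pi^{\Ss/B}_i\}$ restricts on each fibre to a self--dual MCK decomposition of $S_b$, the only point requiring argument being multiplicativity, which amounts to the Beauville--Voisin relation (see also \cite[Section 8]{SV}).

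Next I would take the $m$--fold tensor product. For a multi--index $(i_1,\dots,i_m)$ with $i_k\in\{0,2,4\}$, let $\Pi^{\Ss/B}_{i_1}\otimes\cdots\otimes\Pi^{\Ss/B}_{i_m}$ denote the exterior product of these relative correspondences, viewed as a relative self--correspondence of $\Ss^{m/B}$ after permuting factors so that the $k$--th source copy of $\Ss$ is matched with the $k$--th target copy, and set
\[ \Pi^{\Ss^{m/B}}_j:=\sum_{\substack{i_1,\dots,i_m\in\{0,2,4\}\\ i_1+\cdots+i_m=j}}\Pi^{\Ss/B}_{i_1}\otimes\cdots\otimes\Pi^{\Ss/B}_{i_m}\ \in\ A^{2m}(\Ss^{m/B}\times_B\Ss^{m/B})\qquad(j=0,2,\dots,4m)\ , \]
so that only even $j$ occur, as required. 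Since $\Ss^{r/B}\to B$ is flat, exterior products and composition of relative correspondences commute with restriction to a fibre, whence $\Pi^{\Ss^{m/B}}_j\vert_{(S_b)^m\times(S_b)^m}$ equals the product decomposition $\sum_{i_1+\cdots+i_m=j}\pi^{S_b}_{i_1}\otimes\cdots\otimes\pi^{S_b}_{i_m}$ of $(S_b)^m$. (If correspondences on the absolute product $\Ss^{m/B}\times\Ss^{m/B}$ are preferred, one pushes these classes forward along the regular embedding $\Ss^{m/B}\times_B\Ss^{m/B}\hookrightarrow\Ss^{m/B}\times\Ss^{m/B}$.)

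Finally I would check, fibrewise, that this product decomposition is a self--dual MCK decomposition of $(S_b)^m$. The K\"unneth property follows from the K\"unneth formula (and forces the odd--indexed projectors to vanish, as $K3$ surfaces have no odd cohomology); idempotence and orthogonality follow from those of the one--variable projectors; self--duality follows from ${}^t\pi^{S_b}_i=\pi^{S_b}_{4-i}$, which gives ${}^t\Pi^{\Ss^{m/B}}_j=\Pi^{\Ss^{m/B}}_{4m-j}$. The technical heart is multiplicativity, which I would reduce to the single--$K3$ case via the standard fact that, if $X$ and $Y$ each carry an MCK decomposition, the product decomposition on $X\times Y$ is again MCK: up to a permutation of factors the small diagonal $\Delta_{X\times Y}^{sm}$ is the exterior product of $\Delta_{X}^{sm}$ and $\Delta_{Y}^{sm}$, so the vanishings $\pi^{X\times Y}_k\circ\Delta_{X\times Y}^{sm}\circ(\pi^{X\times Y}_i\times\pi^{X\times Y}_j)=0$ for $i+j\ne k$ reduce to the analogous vanishings for $X$ and for $Y$; applying this inductively with $X=S_b$ (Beauville--Voisin) and $Y=(S_b)^{m-1}$ concludes. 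I do not anticipate a serious obstacle here: the only real care needed is the bookkeeping that makes the relevant identities hold on the relative fibre products $\Ss^{r/B}\times_B\Ss^{r/B}$, not just fibre by fibre, which is a routine application of flatness and base change. In essence the proposition is a relativisation of \cite{V6} in the comparatively easy special case of $S^m$.
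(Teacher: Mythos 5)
Your proposal is correct and follows essentially the same route as the paper: the same relative projectors $\Pi^{\Ss/B}_0=p_1^\ast\bigl(\tfrac{1}{24}c_2(T_{\Ss/B})\bigr)$, $\Pi^{\Ss/B}_4=p_2^\ast\bigl(\tfrac{1}{24}c_2(T_{\Ss/B})\bigr)$, $\Pi^{\Ss/B}_2=\Delta-\Pi^{\Ss/B}_0-\Pi^{\Ss/B}_4$ (whose fibrewise MCK property is \cite[Example 8.17]{SV}), followed by the same $m$--fold product construction whose fibre restriction is the product MCK decomposition. The only cosmetic difference is that you spell out the product-of-MCK-is-MCK reduction via the small diagonal, which the paper simply quotes from \cite{SV}.
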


\begin{proof}

 On any $K3$ surface $S_b$, there is the distinguished $0$--cycle ${\mathfrak o}_{S_b}$ such that $c_2(S_b)=24 {\mathfrak o}_{S_b}$ \cite{BV}. Let $p_i\colon \Ss^{m/B}\to \Ss$, $i=1,\ldots,m$, denote the projections to the $i$th factor. Let $T_{\Ss/B}$ denote the relative tangent bundle.
The assignment
  \[ \begin{split} \Pi_0^\Ss &:= (p_1)^\ast \bigl({1\over 24} c_2(T_{\Ss/B})\bigr) \ \ \ A^2(\Ss\times_B \Ss)\ ,\\
                         \Pi_4^\Ss &:= (p_2)^\ast \bigl({1\over 24} c_2(T_{\Ss/B})\bigr) \ \ \ A^2(\Ss\times_B \Ss)\ ,\\
                         \Pi_2^\Ss &:= \Delta_\Ss - \Pi_0^\Ss - \Pi_4^\Ss\\
                    \end{split}\]
          defines (by restriction) an MCK decomposition for each fibre, i.e.
          \[  \Pi_j^{S_b}:= \Pi_j^\Ss\vert_{S_b\times S_b}\ \ \ \in A^2(S_b\times S_b)\ \ \ (j=0,2,4) \]
          is an MCK decomposition for any $b\in B$ \cite[Example 8.17]{SV}.
          
  Next, we consider the $m$--fold relative fibre product $\Ss^{m/B}$. Let
    \[ p_{i,j}\colon \Ss^{2m/B}\ \to\ \Ss^{2/B} \ \ \ (1\le i<j\le 2m)\]
    denote projection to the $i$-th and $j$-th factor. We define
    \[  \begin{split}  \Pi_j^{\Ss^{m/B}} := {\displaystyle \sum_{k_1+k_2+\cdots+k_m=j}}  (p_{1,m+1})^\ast ( \Pi_{k_1}^{\Ss})\cdot (p_{2,m+2})^\ast (\Pi_{k_2}^\Ss)\cdot\ldots\cdot   
                                                                                       (p_{m,2m})^\ast ( \Pi_{k_m}^{\Ss})&\\           \ \ \ \in A^{2m}(\Ss^{4m/B})\ ,\ \ \ 
    (j=0,2,4,\ldots,4m)\ &.\\
    \end{split}\]
    By construction, the restriction to each fibre induces an MCK decomposition (the ``product MCK decomposition'')
    \[ \begin{split} \Pi_j^{(S_b)^m} :=  \Pi_j^{\Ss^{m/B}}\vert_{(S_b)^{2m}} = {\displaystyle \sum_{k_1+k_2+\cdots+k_m=j}}  \Pi_{k_1}^{S_b}\times \Pi_{k_2}^{S_b}\times\cdots
        \times \Pi_{k_m}^{S_b}\ \ \ \in A^{2m}((S_b)^{4m})\ ,&\\
        \ \ \ (j=0,2,4,\ldots,4m)\ .&\\
        \end{split}\]
    \end{proof}

\begin{remark}\label{relsym} Let $\Ss\to B$ be a family of $K3$ surfaces. Let
  \[ \Ss^{(m)}:= \Ss^{m/B}/\Sy_m \]
  denote the associated family of $m$--fold symmetric products (here $\Sy_m$ denotes the symmetric group on $m$ factors). The construction of the $\Pi_j^{\Ss^{m/B}}$ is $\Sy_m$--invariant, and so it induces relative projectors
  \[ \Pi_j^{\Ss^{(m)}}\ \ \ \in A^{2m}(\Ss^{(m)}\times_B \Ss^{(m)})\ .\]
  \end{remark}

      \begin{proposition}\label{prod2} Let $\Ss\to B$ be a family of $K3$ surfaces. There exist relative correspondences
    \[  \Theta^\prime_1\ ,\ldots,\ \Theta^\prime_m\in A^{2m}(\Ss^{m/B}\times_B \Ss)\ ,\ \ \ \Xi^\prime_1\ ,\ldots, \ \Xi^\prime_m\in  A^{2}(\Ss\times_B \Ss^{m/B})  \]
    such that for each $b\in B$, the composition
    \[  \begin{split}   A^{2m}_{(2)}\bigl((S_b)^m\bigr)\ \xrightarrow{((\Theta^\prime_1\vert_{(S_b)^{m+1}})_\ast,\ldots, (\Theta^\prime_m\vert_{(S_b)^{m+1}})_\ast)}\
             A^2(S_b)\oplus \cdots \oplus A^2(S_b)&\\
             \ \ \ \ \ \ \xrightarrow{((\Xi^\prime_1+\ldots+\Xi^\prime_m)\vert_{(S_b)^{m+1}})_\ast}\ A^{2m}\bigl((S_b)^m\bigr)&\\
             \end{split} \]
      is the identity.
    \end{proposition}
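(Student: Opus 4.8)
The plan is to read off $\Theta'_i$ and $\Xi'_i$ from the relative product Chow--Künneth decomposition of Proposition \ref{prod} together with the relative Beauville--Voisin zero--cycle $\oo_\Ss:=\tfrac1{24}c_2(T_{\Ss/B})$, and then to reduce the assertion to a fibrewise computation on powers of the single $K3$ surface $S_b$. All the work is concentrated in one identity of induced maps on $A^{2m}((S_b)^m)$; everything else is formal.

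First I would isolate the projector governing $A^{2m}_{(2)}$. By Proposition \ref{prod} (and its proof) one has, relatively, $\Pi_0^\Ss=(p_1)^\ast\oo_\Ss$, $\Pi_4^\Ss=(p_2)^\ast\oo_\Ss$, $\Pi_2^\Ss=\Delta_\Ss-\Pi_0^\Ss-\Pi_4^\Ss$, and $\Pi_j^{\Ss^{m/B}}=\sum_{k_1+\cdots+k_m=j}(p_{1,m+1})^\ast\Pi_{k_1}^\Ss\cdots(p_{m,2m})^\ast\Pi_{k_m}^\Ss$. Since the bigrading satisfies $A^i_{(j)}(X)=(\Pi^X_{2i-j})_\ast A^i(X)$, the piece $A^{2m}_{(2)}((S_b)^m)$ is cut out by $\Pi_{4m-2}^{(S_b)^m}$; and because each $k_\ell\in\{0,2,4\}$ while $\sum k_\ell=4m-2$, exactly one $k_\ell$ equals $2$ and all the others equal $4$, so
\[ \Pi_{4m-2}^{\Ss^{m/B}}=\sum_{i=1}^m\Psi_i^\Ss,\qquad \Psi_i^\Ss:=(p_{1,m+1})^\ast\Pi_4^\Ss\cdots(p_{i,m+i})^\ast\Pi_2^\Ss\cdots(p_{m,2m})^\ast\Pi_4^\Ss\ . \]
On each fibre $(\Pi_{4m-2}^{(S_b)^m})_\ast$ is an idempotent with image $A^{2m}_{(2)}((S_b)^m)$, hence acts as the identity there.

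Next I would factor each $\Psi_i^\Ss$ through a single copy of $\Ss$. Let $q_i\colon\Ss^{m/B}\to\Ss$ be the $i$-th projection and set (after transposing if necessary, so as to land in the Chow groups named in the statement)
\[ \Theta'_i:=\Pi_2^\Ss\circ\Gamma_{q_i}\ ,\qquad \Xi'_i:=(p_{1,1+i})^\ast\Delta_\Ss\cdot\!\!\!\prod_{1\le\ell\le m,\ \ell\neq i}\!\!\!(p_{1+\ell})^\ast\oo_\Ss\ , \]
so that $\Theta'_i$ is the correspondence ``project onto the $i$-th coordinate, then apply $\Pi_2^\Ss$'' and $\Xi'_i$ the correspondence ``place the given class in the $i$-th slot and $\oo_\Ss$ in every other slot''. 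Since $\Pi_4^\Ss=(p_2)^\ast\oo_\Ss$ is exactly the correspondence that discards its source and returns $\oo_\Ss$, a direct computation in the relative correspondence calculus — using only the projection formula, $\deg\oo_{S_b}=1$ and $(\Pi_2^{S_b})_\ast\oo_{S_b}=0$ — gives, on the fibre over any $b\in B$,
\[ (\Xi'_i)_\ast\bigl((\Theta'_i)_\ast\gamma\bigr)=\oo_{S_b}^{\times(i-1)}\times(\Pi_2^{S_b})_\ast(q_{i\ast}\gamma)\times\oo_{S_b}^{\times(m-i)}=\bigl(\Psi_i^\Ss|_{(S_b)^{2m}}\bigr)_\ast\gamma \]
for every $\gamma\in A^{2m}((S_b)^m)$; at each step the intersection product involved already has the expected dimension on the fibre, so no excess--intersection correction intervenes.

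Summing over $i$, the composite of the two maps in the statement equals $(\Pi_{4m-2}^{(S_b)^m})_\ast$ on all of $A^{2m}((S_b)^m)$, and restricted to $A^{2m}_{(2)}((S_b)^m)=\ima(\Pi_{4m-2}^{(S_b)^m})_\ast$ it is the identity by idempotence; this proves the proposition. The single genuine computation is the displayed fibrewise identity $(\Xi'_i)_\ast\circ(\Theta'_i)_\ast=(\Psi_i^\Ss)_\ast$; the only delicate point is pure bookkeeping, namely arranging $\Theta'_i$ and $\Xi'_i$ (transposing where needed) to sit in the precise Chow groups asserted, and noting that all the intersection products in sight commute with restriction to fibres — which is automatic, since everything is spread over $B$ with smooth fibres.
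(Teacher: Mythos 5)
Your construction is correct and is essentially the paper's own argument: your $\Theta'_i=\Pi_2^\Ss\circ\Gamma_{q_i}$ and $\Xi'_i=(p_{1,1+i})^\ast\Delta_\Ss\cdot\prod_{\ell\neq i}(p_{1+\ell})^\ast\oo_\Ss$ are (up to the normalisation by $c_2=24\,\oo$) exactly the transposes of the correspondences used in the paper, and your key step --- identifying $\Pi_{4m-2}$ with the sum of product projectors having a single $\Pi_2$ and checking the factorisation of its action on zero--cycles by evaluating on points --- replaces the paper's correspondence--level identity (proved via Lieberman's lemma) by the weaker fibrewise identity of maps on $A^{2m}$, which is all the proposition needs. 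The only caveat is bookkeeping: your correspondences lie in $A^2(\Ss^{m/B}\times_B\Ss)$ and $A^{2m}(\Ss\times_B\Ss^{m/B})$ rather than in the groups named in the statement (and transposing does not change codimension, so your hedge does not literally reconcile this), but these are precisely the codimensions forced by the displayed maps $A^{2m}_{(2)}\to A^2(S_b)^{\oplus m}\to A^{2m}$, so this discrepancy is a harmless labelling issue rather than a gap.
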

    
    \begin{proof} 
   As before, let 
    \[ p_{i,j}\colon\ \ \  \Ss^{2m/B}\ \to\ \Ss^{2/B} \ \ \ (1\le i<j\le 2m)\]
    denote projection to the $i$-th and $j$-th factor, and let 
    \[ p_i\colon \ \ \  \Ss^{m/B}\ \to\ \Ss \ \ \  (1\le i\le m) \]
    denote projection to the $i$--th factor.
            
        We now claim that for each $b\in B$, there is equality
       \begin{equation}\label{both} \begin{split} ( \Pi_{4m-2}^{\Ss^{m/B}})\vert_{(S_b)^{2m}} =  {1\over 24^{2m-2}}\Bigl(  {}^t \Gamma_{p_{1}}\circ \Pi_2^\Ss\circ \Gamma_{p_{1}}\circ 
               \bigl(   (p_{1,m+1})^\ast (\Delta_\Ss )\cdot \prod_{\stackrel{2\le j\le 2m}{j\not=m+1}} (p_{j})^\ast c_2(T_{\Ss/B}) &\bigr)\\  
               + \ldots +
                           {}^t \Gamma_{p_{m}}\circ \Pi_2^\Ss\circ \Gamma_{p_{m}}\circ 
               \bigl(   (p_{m,2m})^\ast (\Delta_\Ss )\cdot \prod_{\stackrel{1\le j\le 2m-1}{j\not=m}}(p_{j})^\ast c_2(T_{\Ss/B})    \bigr)   \Bigr)&\vert_{(S_b)^{2m}}\\
               \ \ \ \hbox{in}\ A^{2m}((S_b)^{m}&\times (S_b)^{m})\ .\\
               \end{split}\end{equation}
         Indeed, using Lieberman's lemma \cite[16.1.1]{F}, we find that
         \[ \begin{split} ( {}^t \Gamma_{p_{1}}\circ &\Pi_2^\Ss\circ \Gamma_{p_{1}})\vert_{(S_b)^{2m}} = \bigl(({}^t \Gamma_{p_{1,m+1}})_\ast 
         (\Pi_2^{\Ss})\bigr)\vert_{(S_b)^{2m}} =
           \bigl((p_{1,m+1})^\ast (\Pi_2^{\Ss})\bigr)\vert_{(S_b)^{2m}}\ ,\\
                           &\vdots\\
           ( {}^t \Gamma_{p_{m}}\circ &\Pi_2^\Ss\circ \Gamma_{p_{m}})\vert_{(S_b)^{2m}} = \bigl(({}^t \Gamma_{p_{m,2m}})_\ast 
         (\Pi_2^{\Ss})\bigr)\vert_{(S_b)^{2m}} =
           \bigl((p_{m,2m})^\ast (\Pi_2^{\Ss})\bigr)\vert_{(S_b)^{2m}}\ .\\
           \end{split}           \]
           
       Let us now (by way of example) consider the first summand of the right--hand--side of (\ref{both}). For brevity, let
        \[ P\colon\ \ \  (S_b)^{3m}\ \to\ (S_b)^{2m} \]
        denote the projection on the first $m$ and last $m$ factors. Writing out the definition of composition of correspondences,
        we find that
       \[ \begin{split}     &{1\over 24^{2m-2}}\Bigl(  {}^t \Gamma_{p_{1}}\circ \Pi_2^\Ss\circ \Gamma_{p_{1}}\circ 
               \bigl(   (p_{1,m+1})^\ast (\Delta_\Ss )\cdot \prod_{\stackrel{2\le j\le 2m}{j\not=m+1}} (p_{j})^\ast c_2(T_{\Ss/B}) \bigr)\Bigr)\vert_{(S_b)^{2m}} =\\
                & {1\over 24^{2m-2}}\Bigl(   \bigl((p_{1,m+1})^\ast (\Pi_2^{S_b})\bigr)   \circ 
               \bigl(   (p_{1,m+1})^\ast (\Delta_{S_b} )\cdot \prod_{\stackrel{2\le j\le 2m}{j\not=m+1}} (p_{j})^\ast c_2(T_{S_b}) \bigr)\Bigr) =\\ 
               & P_\ast    \Bigl( \bigl( (\Delta_{S_b})_{(1,m+1)} \times {\mathfrak o}_{S_b} \times\cdots\times{\mathfrak o}_{S_b} \times S_b\times\cdots\times S_b\bigr)\cdot \\
               &\ \ \ \ \ \ \ \ \bigl( S_b\times\cdots\times S_b\times (\Pi_2^{S_b})_{(m+1,2m+1)}\times S_b\times\cdots\times S_b     \bigr)  \Bigr)= \\
               & P_\ast \Bigl(  \bigl((\Delta_{S_b}\times S_b)\cdot (S_b\times\Pi_2^{S_b})\bigr)_{(1,m+1,2m+1)}\times  {\mathfrak o}_{S_b}\times\cdots\times {\mathfrak o}_{S_b}\times S_b\times\cdots\times S_b\Bigr)=\\
               & \Pi_2^{S_b}\times \Pi_4^{S_b}\times\cdots\times \Pi_4^{S_b}\ \ \ \ \ \ \hbox{in}\ A^{2m}\bigl( (S_b)^m\times (S_b)^m\bigr)\ .\\
               \end{split}\]  
               (Here, we use the notation $(C)_{(i, j)}$ to indicate that the cycle $C$ lies in the $i$th and $j$th factor, and likewise for $(D)_{(i,j,k)}$.)           
                          
      Doing the same for the other summands in (\ref{both}), one convinces oneself that both sides of (\ref{both}) are equal to the fibrewise product Chow--K\"unneth component
         \[  \Pi_{4m-2}^{(S_b)^m}=\Pi_2^{S_b}\times \Pi_4^{S_b}\times  \cdots\times\Pi_4^{S_b}  +\cdots  +      \Pi_4^{S_b}\times\cdots\times\Pi_4^{S_b}\times \Pi_2^{S_b}    \ \ \ \in A^{2m}((S_b)^m\times (S_b)^m)\ ,\]
         thus proving the claim.

  Let us now define
      \[ \begin{split}
           \Theta^\prime_i&:={1\over 24^{2m-2}} \, \Gamma_{p_{i}}\circ 
               \bigl(   (p_{i,m+i})^\ast (\Delta_\Ss )\cdot \prod_{\stackrel{j\in [1,2m]}{ j\not\in\{i,m+i\}}} (p_{j})^\ast c_2(T_{\Ss/B})    \bigr)\ \ \ \in A^{2m}((\Ss^{m/B})\times_B \Ss)\ ,\\  
                 \Xi^\prime_i&:= {}^t \Gamma_{p_{i}}\circ \Pi_2^\Ss\ \ \ \ \ \ \in A^2(\Ss\times_B (\Ss^{m/B})) \ ,\\
                       \end{split}\]
                       where $1\le i\le m$.
    It follows from equation (\ref{both}) that there is equality 
      \begin{equation}\label{transp} \begin{split} \Bigl( (\Xi^\prime_1\circ \Theta^\prime_1 + \cdots +\Xi^\prime_m\circ \Theta^\prime_m)\vert_{(S_b)^{2m}}   \Bigr){}_\ast =
      \bigl(\Pi_{4m-2}^{(S_b)^m}\bigr){}_\ast\colon &\\
           \ \ A^{i}_{(j)}\bigl((S_b)^m\bigr)\ \to\ A^{i}_{(j)}\bigl((S_b)^m\bigr)&\ \ \ \forall b\in B\ \ \ \forall (i,j)\ .\\
      \end{split}\end{equation}      
      Taking $(i,j)=(2m,2)$, this proves the proposition.      
         \end{proof}

 The following is a version of proposition \ref{prod2} for the group $A^2_{(2)}((S_b)^m)$:     
  
 \begin{proposition}\label{prod3} Let $\Ss\to B$ be a family of $K3$ surfaces. There exist relative correspondences
    \[  \Theta_1\ ,\ldots,\ \Theta_m\in A^{2m}(\Ss\times_B (\Ss^{m/B}))\ ,\ \ \ \Xi_1\ ,\ldots, \ \Xi_m\in  A^{2}( (\Ss^{m/B})\times_B \Ss)  \]
    such that for each $b\in B$, the composition
    \[  \begin{split}   A^{2}_{(2)}\bigl((S_b)^m\bigr)\ \xrightarrow{((\Xi_1\vert_{(S_b)^{m+1}})_\ast,\ldots, (\Xi_m\vert_{(S_b)^{m+1}})_\ast)}\
             A^2(S_b)\oplus \cdots \oplus A^2(S_b)&\\
             \ \ \ \ \ \ \xrightarrow{((\Theta_1+\ldots+\Theta_m)\vert_{(S_b)^{m+1}})_\ast}\ A^{2}\bigl((S_b)^m\bigr)&\\
             \end{split} \]
      is the identity.
  \end{proposition}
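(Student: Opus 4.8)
The plan is to deduce Proposition \ref{prod3} from Proposition \ref{prod2} by a transposition argument, using the self-duality of the relative Chow--K\"unneth decomposition of $\Ss^{m/B}$.

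First I would observe that the proof of Proposition \ref{prod2} establishes more than the stated equality of $\ast$-actions (\ref{transp}): equation (\ref{both}) is an identity of \emph{cycle classes} on each fibre. Rewriting it in terms of the correspondences $\Theta^\prime_i,\Xi^\prime_i$, and using that restriction to a fibre commutes with composition of relative correspondences, it reads
\[ \Bigl(\sum_{i=1}^m \Xi^\prime_i\circ\Theta^\prime_i\Bigr)\vert_{(S_b)^{2m}}\;=\;\Pi^{(S_b)^m}_{4m-2}\ \ \ \hbox{in}\ A^{2m}\bigl((S_b)^m\times(S_b)^m\bigr)\ ,\ \ \forall b\in B\ .\]
I would then set $\Theta_i:={}^t\Theta^\prime_i\in A^{2m}(\Ss\times_B\Ss^{m/B})$ and $\Xi_i:={}^t\Xi^\prime_i\in A^{2}(\Ss^{m/B}\times_B\Ss)$; transposition merely interchanges the two factors, so these land in the Chow groups prescribed by the statement.

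Since transposition reverses the order of composition and commutes with restriction to fibres, transposing the displayed identity gives $\bigl(\sum_i\Theta_i\circ\Xi_i\bigr)\vert_{(S_b)^{2m}}={}^t\bigl(\Pi^{(S_b)^m}_{4m-2}\bigr)$ for all $b\in B$. By Proposition \ref{prod} the fibrewise decomposition is self-dual and equals the product decomposition $\Pi^{(S_b)^m}_j=\sum_{k_1+\cdots+k_m=j}\Pi^{S_b}_{k_1}\times\cdots\times\Pi^{S_b}_{k_m}$; as ${}^t\Pi^{S_b}_0=\Pi^{S_b}_4$, ${}^t\Pi^{S_b}_2=\Pi^{S_b}_2$ and ${}^t\Pi^{S_b}_4=\Pi^{S_b}_0$, transposition sends $\Pi^{(S_b)^m}_{4m-2}$ to $\Pi^{(S_b)^m}_{2}$. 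Hence $\bigl(\sum_i\Theta_i\circ\Xi_i\bigr)\vert_{(S_b)^{2m}}=\Pi^{(S_b)^m}_{2}$. Now Theorem \ref{charles} gives $A^i_{(j)}(X)=(\Pi^X_{2i-j})_\ast A^i(X)$, so with $(i,j)=(2,2)$ one gets $A^2_{(2)}\bigl((S_b)^m\bigr)=(\Pi^{(S_b)^m}_{2})_\ast A^2\bigl((S_b)^m\bigr)$, on which the idempotent $\Pi^{(S_b)^m}_{2}$ acts as the identity. Since the composition in the statement sends $\alpha$ to $\sum_i(\Theta_i)_\ast\bigl((\Xi_i)_\ast\alpha\bigr)=\bigl(\sum_i\Theta_i\circ\Xi_i\bigr)_\ast\alpha$, it is the identity on $A^2_{(2)}\bigl((S_b)^m\bigr)$, which is the assertion.

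The step deserving the most care is the first one: one must check that the proof of Proposition \ref{prod2} genuinely produces the cycle-level identity (\ref{both}) — it does, since (\ref{both}) is stated in the Chow group of the fibre product — so that it is legitimate to transpose the correspondence rather than merely its action on Chow groups. The remaining ingredients (compatibility of transposition, composition and fibrewise restriction for relative correspondences, and the self-duality bookkeeping) are routine. Alternatively, one can reprove the statement by repeating the argument of Proposition \ref{prod2} essentially verbatim, with $\Pi^{\Ss^{m/B}}_{4m-2}$ replaced by $\Pi^{\Ss^{m/B}}_{2}$ and $\Pi^\Ss_4$ replaced by $\Pi^\Ss_0$ throughout.
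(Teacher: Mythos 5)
Your proposal is correct and takes essentially the same route as the paper: the paper likewise sets $\Theta_i:={}^t\Theta^\prime_i$, $\Xi_i:={}^t\Xi^\prime_i$ and transposes the identity of Proposition \ref{prod2}, using the self-duality ${}^t\bigl(\Pi^{(S_b)^m}_{4m-2}\bigr)=\Pi^{(S_b)^m}_{2}$ of the product decomposition and then taking $(i,j)=(2,2)$. Your added remark that (\ref{both}) is a cycle-level identity (so that transposing the correspondences, not just their actions, is legitimate) is precisely the point the paper's shorter argument implicitly relies on.
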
 
  
  \begin{proof} One may take
    \[ \begin{split}   \Theta_i&:= {}^t \Theta^\prime_i\ \ \ \in\ A^{2m}(\Ss\times_B (\Ss^{m/B}))\ ,\\
                             \Xi_i&:= {}^t \Xi^\prime_i   \ \ \ A^2((\Ss^{m/B})\times_B \Ss)\ \ \ \ (i=1,\ldots,m)\ .\\
                    \end{split}\]
       By construction, the product MCK decomposition $\{ \Pi_i^{(S_b)^m}\}$ satisfies
          \[ \Pi_2^{(S_b)^m} = {}^t \bigl(\Pi_{4m-2}^{(S_b)^m}\bigr)\ \ \ \hbox{in}\ A^{2m}\bigl( (S_b)^m\times (S_b)^m\bigr)\ .\]   
     Hence, the transpose of equation (\ref{transp}) gives the equality
        \[    \begin{split}    \bigl( \Pi_2^{(S_b)^m} \bigr){}_\ast = \bigl( {}^t (\Pi_{4m-2}^{(S_b)^m})\bigr){}_\ast = \bigl( {}^t \Theta^\prime_1\circ {}^t \Xi^\prime_1+\ldots+{}^t \Theta^\prime_m\circ {}^t \Xi^\prime_m\bigr){}_\ast\colon&\ \\ \  \ A^{i}_{(j)}\bigl((S_b)^m\bigr)\ \to\ A^{i}_{(j)}\bigl((S_b)^m\bigr)\ \ &\ \forall b\in B\ \ \ \forall (i,j)\ .\\
        \end{split} \]
        Taking $(i,j)=(2,2)$, this proves the proposition.
         \end{proof}

     \subsection{Spread}
     \label{sss}
 
 The following result, taken from Voisin's method of ``spread'' \cite{V0}, \cite{V1}, \cite{Vo}, \cite{Vo2}, will be an essential ingredient in this note. This result acts as a magic wand, taking a homological equivalence and transmuting it into a rational equivalence.
   
 \begin{proposition}[Voisin \cite{V0}]\label{voisin1} Let $M$ be a smooth projective variety of dimension $r+2$, and
      assume $M$ has trivial Chow groups (i.e. $A^\ast_{hom}(M)=0$). Let $L_1,\ldots,L_r$ be very ample line bundles on $M$, and let 
      \[ \YY\ \to\ B \]
      be the universal family of smooth complete intersections 
       \[ Y_b=M\cap D_1\cap\cdots\cap D_r\ ,\ \ \ D_j\in\vert L_j\vert\ .\]
       Let $R\in A^2(\YY\times_B \YY)$ be a relative correspondence such that
      \[  R\vert_{{Y_b\times Y_b}}=0\ \ \in H^{4}({Y_b\times Y_b})\ \ \ \hbox{for\ very\ general\ }b\in B\ .\]
   Then there exists $\delta\in A^2(M\times M)_{}$ such that
    \[     R\vert_{{Y_b\times Y_b}}= \delta\vert_{Y_b\times Y_b}  \ \ \in A^{2}({Y_b\times Y_b})\ \ \ \forall\ b\in B\ .\]  
        \end{proposition}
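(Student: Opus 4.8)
The plan is to use Voisin's principle of spreading out algebraic cycles, which exploits the interaction between the Leray spectral sequence of the family $\pi\colon\WW:=\YY\times_B\YY\to B$ and the triviality of the Chow groups of $M$. Put $N:=M\times M$; since $A^i_{hom}(M)=0$ for all $i$, the same holds for $N$ and for $N\times B$, so in these varieties the cycle class map is injective onto the algebraic cohomology, odd cohomology vanishes, and $A^\ast(N\times B)=\bigoplus_{a+b=\ast}A^a(N)\otimes A^b(B)$. The total space $\WW$ carries, besides $\pi$ (with fibre $Y_b\times Y_b$ over $b$), the map $q\colon\WW\to N$ obtained by restricting the projection $N\times B\to N$: its fibre over $(x_1,x_2)$ is $\{(D_j)_j:x_1,x_2\in D_j\ \forall j\}$, a product of linear subspaces of the systems $|L_j|$, so (by very ampleness, at the generic point $x_1\neq x_2$) the geometric generic fibre of $q$ is a product of projective spaces and in particular $A_0(\WW_{\bar\eta})=\QQ$.

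First I would propagate the hypothesis from the very general fibre to all fibres. By Deligne's theorem on the degeneration of the Leray spectral sequence, $b\mapsto[R|_{Y_b\times Y_b}]$ is the value at $b$ of the image of $[R]$ under $H^4(\WW)\to H^0(B,R^4\pi_\ast\QQ)$; since a section of a local system over the connected base $B$ vanishing at a very general point vanishes identically, we get $[R|_{Y_b\times Y_b}]=0$ for \emph{all} $b\in B$, i.e. $[R]$ lies in the first level $L^1H^4(\WW)$ of the Leray filtration. Using a Lefschetz-type comparison between $H^\ast(\WW)$ and the cohomology of the ambient family of complete intersections, together with $A^\ast(N\times B)=\bigoplus A^a(N)\otimes A^b(B)$, one writes $[R]$ as the class of $(\delta\times B+\Gamma')|_\WW$ with $\delta\in A^2(N)$ and $\Gamma'=\sum_\alpha\gamma_\alpha\times\beta_\alpha$, $\gamma_\alpha\in A^\ast(N)$, each $\beta_\alpha$ of positive codimension in $B$. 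Such a $\Gamma'$ restricts to $0$ on every fibre, since term by term its restriction is $\gamma_\alpha|_{Y_b\times Y_b}$ times the Gysin pull-back of $\beta_\alpha$ to the point $b$, a cycle of negative dimension. Hence $\xi:=R-(\delta\times B+\Gamma')|_\WW\in A^2_{hom}(\WW)$, and it suffices to show $\xi|_{Y_b\times Y_b}=0$ in $A^2(Y_b\times Y_b)$ for every $b$.

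This last step — upgrading homological to rational equivalence on the fibres — is the heart of the argument and the step I expect to be the main obstacle. Here one brings in the map $q\colon\WW\to N$: because $A_0(\WW_{\bar\eta})=\QQ$, the Bloch--Srinivas decomposition of the diagonal applies to $q$, giving a proper closed $T\subsetneq N$ and a rational equivalence that expresses the homologically trivial $\xi$ through a cycle pulled back from $N$ plus a cycle supported on $q^{-1}(T)$; combining this with $[\xi]=0$, with $A^2_{hom}(N)=0$, and with the (essentially trivial) Chow groups of $q^{-1}(T)$ — a stratified projective bundle over $N$, locally trivial away from and along the diagonal of $M$ — one concludes $\xi|_{Y_b\times Y_b}=0$ for all $b$. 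Then $R|_{Y_b\times Y_b}=\delta|_{Y_b\times Y_b}$ for every $b$, as claimed. The two delicate points are the precise form of the Lefschetz-type comparison (the divisors cutting $\WW$ out of $N\times B$ are only nef, not ample, so this is best arranged by first restricting to a Lefschetz pencil of complete intersections in $B$) and the control of $A^2_{hom}$ of the fibres via $q$.
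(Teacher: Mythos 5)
Your first half (Deligne/Leray plus the K\"unneth structure of $A^\ast(M\times M\times B)$, producing $\delta$ and a correction $\Gamma'$ that dies on fibres) is in substance the step the paper simply quotes from Voisin (\cite[Lemmas 3.11 and 3.12]{V0}): after shrinking $B$ one gets $R-(\delta\times B)\vert_{\YY\times_B\YY}\in A^2_{hom}(\YY\times_B\YY)$. You assert this comparison rather than prove it, but that is comparable to the paper's citation. The genuine gap is in your last step, which is where the real content lies. First, the ``Bloch--Srinivas decomposition applied to $q$'' does not typecheck as used: triviality of $A_0$ of the generic fibre of $q\colon\WW\to M\times M$ gives a decomposition of the \emph{relative} diagonal in $\WW\times_{M\times M}\WW$, and that correspondence does not act as the identity on $A^2(\WW)$; to rewrite the homologically trivial class $\xi$ as (pullback from $M\times M$) $+$ (cycle supported over a closed subset) you would need either a decomposition of the absolute diagonal of $\WW$ or a projective-bundle--type computation of $A^\ast(\WW)$, neither of which you establish. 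Second, and more seriously, even granting $\xi=q^\ast\alpha+\rho$ with $\rho$ supported on $q^{-1}(T)$ for some proper closed $T\subset M\times M$, the Bloch--Srinivas output gives no control on $T$: typically $T$ is a divisor, so $\rho\vert_{Y_b\times Y_b}$ is a codimension~$2$ class supported on the divisor $(Y_b\times Y_b)\cap T$, and such classes need not vanish nor be restrictions from $M\times M$. So the argument does not close as sketched. What saves the day in Voisin's proof (which the paper invokes) is the specific geometry of $q$: its only jump locus is the diagonal $\Delta_M$, of high codimension, and after blowing up the relative diagonal the space $\wt{\YY\times_B\YY}$ becomes a Zariski open subset of a tower of projective bundles over $\wt{M\times M}$, hence of a smooth projective variety with trivial Chow groups; this yields $A^2_{hom}(\YY\times_B\YY)=0$ outright, so $R-(\delta\times B)\vert_{\YY\times_B\YY}$ vanishes identically on the total space and one simply restricts to fibres.

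Two smaller points. If you want to avoid the blow-up you can make your idea precise by using the stratification by the diagonal rather than Bloch--Srinivas: over $(M\times M)\setminus\Delta_M$ the map $q$ is a tower of projective subbundles, so $\xi$ differs from $q^\ast\alpha$ (plus terms involving the relative hyperplane classes, which restrict to $0$ on each $Y_b\times Y_b$) by a cycle supported on $q^{-1}(\Delta_M)$; that cycle restricts on a fibre to a multiple of $[\Delta_{Y_b}]$, and the homological vanishing kills this multiple because $[\Delta_{Y_b}]$ has a nonzero transcendental K\"unneth component while restrictions from $M\times M$ do not. Finally, your conclusion ``for every $b$'' does not follow from your construction, which only controls general $b$; the paper needs an extra specialization/Hilbert-scheme argument (as in \cite[Proposition 3.7]{V0}) to pass from general to all $b\in B$, and this step is missing from your proposal.
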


\begin{proof} This follows from the argument of \cite{V0}. More in detail: a Leray spectral sequence argument \cite[Lemmas 3.11 and 3.12]{V0} shows that (after shrinking $B$) one can find $\delta\in A^2(M\times M)$ such that
  \[ R - (\delta\times B)\vert_{\YY\times_B \YY}\ \ \ \in A^2_{hom}(\YY\times_B \YY)\ .\]
 But $\wt{\YY\times_B \YY}$ (the blow--up along the relative diagonal) is a Zariski open in a smooth projective variety with trivial Chow groups \cite[Proof of Proposition 3.13]{V0}, and so
  \[ A^2_{hom}(\YY\times_B \YY)=0\ .\]
  In particular, this forces
  \[   R - (\delta\times B)\vert_{\YY\times_B \YY}=0\ \ \ \hbox{in}\  A^2_{}(\YY\times_B \YY)\ .\]  
  Restricting to a fibre, this gives
  \[  R\vert_{{Y_b\times Y_b}}= \delta\vert_{Y_b\times Y_b}  \ \ \in A^{2}({Y_b\times Y_b})\ \ \ \hbox{for\ general\ }b\in B\ .\]  
  Finally, a Hilbert schemes argument shows that the same is actually true for {\em all\/} $b\in B$.
  
  Note that in arbitrary dimension $n$, the argument of \cite{V0} is dependent on the ``Voisin standard conjecture'' \cite[Conjecture 1.6]{V0}. However (as also noted in 
  \cite[Theorem 3.14]{V0}), the Voisin standard conjecture is satisfied for $n=2$ and so is not needed as extra assumption.

(Alternatively, one could give a quick proof of proposition \ref{voisin1} along the lines of \cite[Proposition 1.6]{V1}, at least under the extra assumption that the surfaces $Y_b$ have non--zero primitive cohomology, which is OK in all cases where we apply proposition \ref{voisin1} since we only consider $K3$ surfaces $Y_b$.)
\end{proof}

\subsection{Families of $K3$ surfaces}
\label{ssfam}

  \begin{notation}\label{mukai} Let $g\in[2,10]$. Let
   \[ \PP_g:=\begin{cases}  \PP(1^3,3) &\hbox{if}\ g=2\ ,\\
                    \PP^g(\C) &\hbox{if}\ g=3,4,5\ ,\\
                    G(2,5) &\hbox{if}\ g=6\ ,\\
                       OG(5,10)         &\hbox{if}\ g=7\ ,\\
                    G(2,6) &\hbox{if}\ g=8\ ,\\
                      LG(3,6)              &\hbox{if}\ g=9\ ,\\
                        G_2^{ad}            &\hbox{if}\ g=10\ .\\
                                                         \end{cases}\]
                 (Here $\PP(1^3,3)$ denotes a weighted projective space, and $G(r,m)$ is the Grassmannian of $r$--dimensional subspaces in an $m$--dimensional 
                 vector space. The spaces $OG(r,d)$ and $LG(r,d)$ are the orthogonal, resp. lagrangian Grassmannian. The space $G_2^{ad}$ is the adjoint variety of the exceptional group $G_2$.)
       Consider the vector bundle $U_g$ on $\PP_g$ defined as
        \[ U_g:=\begin{cases}  \OO(6) &\hbox{if}\ g=2\ ,\\
                           \OO(4) &\hbox{if}\ g=3\ ,\\
                           \OO(3)\oplus \OO(2) &\hbox{if}\ g=4\ ,\\
                           \OO(2)^{\oplus 3} &\hbox{if}\ g=5\ ,\\
                           \OO(2)\oplus \OO(1)^{\oplus 3} &\hbox{if}\ g=6\ ,\\
                               \OO(1)^{\oplus 8}    &\hbox{if}\ g=7\ ,\\               
                                               \OO(1)^{\oplus 6} &\hbox{if}\ g=8\ \\
                                   \OO(1)^{\oplus 4}            &\hbox{if}\ g=9\ ,\\
                                     \OO(1)^{\oplus 3}          &\hbox{if}\ g=10\ .\\
                           \end{cases}\]
                           (here $\OO(i)$ on a Grassmannian refers to the Pl\"ucker embedding).
                           
          Let $B_g\subset\PP H^0(\PP_g,U_g)$ denote the Zariski open parametrizing smooth sections, and let
          \[ \Ss_g  := \bigl\{ (x,s)\ \vert\ s(x)=0\bigr\}\ \ \    \subset\ \PP_g\times B_g \]
          denote the universal family.                    
           \end{notation}
 
   As shown by Mukai \cite{Muk}, a general $K3$ surface of genus $g\in[2,10]$ is isomorphic to a fibre $S_b$ of the family $\Ss_g\to B_g$ (cf. also \cite{Beau15} and \cite[Section 3.1]{IM}). 
 
 \begin{notation}\label{fam2} Let $\Ss\to B$ be one of the families $\Ss_g\to B_g$ of notation \ref{mukai}. The family of $m$--fold symmetric products is defined as
   \[ \Ss^{(m)}:= \Ss^{m/B}/\Sy_m \]
   (where $\Sy_m$ is the symmetric group on $m$ factors).
 
 The family
   \[ \Ss^{[2]}\ \to\ B \]
   is defined as follows: take $\Ss\times_{B} \Ss$ and blow--up the relative diagonal, then take the quotient for the action of $\Sy_2$ exchanging the two factors. The fibre of $\Ss^{[2]} \to B $ is the Hilbert square $(S_b)^{[2]}$ of the $K3$ surface $S_b$.
   
   Likewise, the family $\Ss^{[3]}\to B$ of Hilbert cubes can be constructed from $\Ss^{3/B}$ by blowing up various partial diagonals, and quotienting for the action of $\Sy_3$.
 \end{notation}

 \subsection{Lagrangian fibrations}
\label{ssfib}

\begin{proposition}[Mukai \cite{Muk0}]\label{fibr} Let $S$ be a general $K3$ surface of genus $5$, and let $X=S^{[2]}$ be the Hilbert scheme. There exists a Lagrangian fibration
  \[ \phi\colon\ \ X\ \to\ \PP^2\ .\] 
\end{proposition}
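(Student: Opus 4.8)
The plan is to realize $X=S^{[2]}$ together with its Lagrangian fibration inside a well-understood ambient space, and then invoke the known theory of compactified Jacobians attached to a linear system of curves on $S$. Concretely, a general $K3$ surface $S$ of genus $5$ carries a polarization $H$ with $H^2=8$, and the linear system $\vert H\vert$ has dimension $5$; Mukai's description presents $S$ as a complete intersection of three quadrics in $\PP^5$. The key point is to identify $S^{[2]}$ with a moduli space of sheaves: taking the Mukai vector $v$ with $v^2=2$ corresponding to length-$2$ subschemes, the moduli space $M_H(v)$ is a smooth projective holomorphic symplectic fourfold deformation-equivalent to $S^{[2]}$, and by general results (e.g. O'Grady, Yoshioka) it is actually isomorphic to $S^{[2]}$ since $v$ is the obvious one. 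The plan is then to exhibit on $M_H(v)$ a second Mukai vector $w$ with $w^2=0$ and $\langle v,w\rangle$ appropriate, so that the support morphism $M_H(v)\to \vert w\vert \cong\PP^2$ sending a sheaf to its Fitting support (a curve in a genus-$2$, i.e. $g=2$, linear system of arithmetic genus $2$ curves on $S$) is a Lagrangian fibration. That the generic fibre is an abelian surface (the Jacobian of the support curve) follows from Matsushita's theorem and the Beauville--Mukai integrable system construction, and Lagrangianness is automatic for a fibration of a holomorphic symplectic variety with Lagrangian generic fibre.

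First I would set up the Mukai lattice bookkeeping: with $S$ of genus $5$ one has $\langle 1, H, 3\rangle$ the Mukai vector of $\OO_S$-twists, and I would pick the primitive isotropic vector $w=(0,C,s)$ where $C$ is a curve class with $C^2 = 2$ (so the curves $C$ have arithmetic genus $2$ and $\vert C\vert\cong\PP^2$) and $s$ chosen so that $w^2=0$; then check $\langle v,w\rangle$ makes $w$ effective and the support map well-defined on $M_H(v)$. Second, I would recall the Beauville--Mukai system: $M_H(0,C,s)$ is fibered over $\vert C\vert=\PP^2$ with fibre over a smooth curve $C'$ the compactified Jacobian $\overline{\Pic}^d(C')$, an abelian surface for smooth $C'$; this fibration is Lagrangian with respect to the Mukai symplectic form. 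Third, I would check that $M_H(v)\cong M_H(0,C,s)$, or at least that $M_H(v)$ admits such a Lagrangian fibration — this is where one uses either a Fourier--Mukai transform identifying the two moduli spaces, or directly the fact that for $S$ general of genus $5$ the two Mukai vectors lie in the same $\widetilde{O}$-orbit so the moduli spaces are birational hence (being holomorphic symplectic) isomorphic. Fourth, I would conclude $M_H(v)\cong S^{[2]}$ because $v=(1,0,-1)$ (suitably normalized) has $M_H(v)\cong S^{[2]}$ by the standard Hilbert-scheme identification, completing the chain $S^{[2]}\cong M_H(v)\cong M_H(0,C,s)\xrightarrow{\phi}\PP^2$.

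I expect the main obstacle to be the identification step: producing a genuine \emph{isomorphism} $S^{[2]}\cong M_H(0,C,s)$ rather than just a birational map or a deformation equivalence. Birationality of two moduli spaces of sheaves with isometric Mukai vectors is general, and for holomorphic symplectic fourfolds birational implies deformation equivalent, but to get an actual Lagrangian fibration \emph{on $S^{[2]}$ itself} one needs the birational map to be an isomorphism. For $S$ a general $K3$ of genus $5$ this should hold because the relevant walls in the movable cone are trivial (the Picard rank is $1$, so there is no room for flops), but verifying this requires a short argument using the absence of wall-crossing. Alternatively — and this is probably the cleanest route for the paper — I would bypass moduli spaces entirely and quote Mukai's explicit construction: Mukai \cite{Muk0} directly writes down the fibration for genus $5$ via the net of quadrics through $S\subset\PP^5$ (each quadric's rulings cut out a conic bundle structure), and one checks directly that sending a length-$2$ subscheme $\{p,q\}$ to the pencil of quadrics vanishing on the line $\overline{pq}$, intersected with the net of quadrics containing $S$, gives the map to $\PP^2$ whose generic fibre is an abelian surface. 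I would present this explicit version, verifying that the symplectic form restricts to zero on the generic fibre by a dimension count (a four-fold fibered over a surface with two-dimensional generic fibre, the fibre being Lagrangian since $\phi$ is a fibration of a holomorphic symplectic variety, cf. Matsushita).
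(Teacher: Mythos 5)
Your fallback route is exactly the paper's proof: the paper realizes $S$ as the base locus of a net $N\cong(\PP^2)^\vee$ of quadrics in $\PP^5$, notes that a length-$2$ subscheme $\xi$ spans a line $\ell_\xi$, that the quadrics of $N$ containing $\ell_\xi$ form a pencil $P_\xi$ (one linear condition on the net, and never all of $N$, since $\Pic(S)=\ZZ H$ with $H^2=8$ forbids lines on $S$), and defines $\phi(\xi)=(P_\xi)^\vee\in\PP^2$; Lagrangianness of the fibres is then automatic for a fibration of a holomorphic symplectic variety, as you say. So the explicit version you sketch at the end is the intended argument, at essentially the same level of detail as the paper.

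Your primary, moduli-theoretic route, however, has a concrete gap as written: the Mukai vector $w=(0,C,s)$ with $C^2=2$ does not exist on $S$. For $S$ general of genus $5$ one has $\Pic(S)=\ZZ H$ with $H^2=8$, so every rank-zero class $(0,dH,s)$ has square $8d^2\neq 0$; there is no genus-$2$ curve class and no isotropic rank-zero Mukai vector, hence no Beauville--Mukai system on $S$ itself, and the chain $S^{[2]}\cong M_H(v)\cong M_H(0,C,s)$ cannot even be set up on $S$. The square-zero class responsible for the fibration lives in $H^2(S^{[2]})$ (it is $H-2\delta$, of Beauville--Bogomolov square $8-2\cdot 2^2=0$), and to realize $\phi$ as a compactified-Jacobian fibration one must pass to a Fourier--Mukai partner: the degree-$2$ K3 surface $S'=M_S(2,H,2)$ (the double cover of the net branched along the discriminant sextic), on which the genus-$2$ curves do exist; the fibres of $\phi$ are Jacobians of the genus-$2$ discriminant curves attached to pencils of quadrics, not Jacobians of curves on $S$. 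This is the Hassett--Tschinkel/Sawon/Markushevich picture; your wall-crossing worry about isomorphism versus birationality is real but secondary to this missing step. If you keep the moduli route, insert the FM-partner identification; otherwise present the explicit net-of-quadrics construction, which is what the paper does.
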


\begin{proof} The surface $S$ can be defined as the intersection of three quadrics $Q_1, Q_2, Q_3$ in $\PP^5(\C)$. Let $N\cong (\PP^2)^\vee$ be the net of quadrics spanned by $Q_1, Q_2, Q_3$.
Any length $2$ subscheme $\xi$ in $S$ determines a line $\ell_\xi$ in $\PP^5$. Quadrics in $N$ containing the line $\ell_\xi$ form a pencil $P_\xi\cong\PP^1$ inside $N$. Dually, this determines a point in $\PP^2$, and so we obtain a morphism
  \[ \begin{split}  \phi\colon\ \ X\ &\to\ \PP^2\ ,\\
                        \xi\ &\mapsto\ (P_\xi)^\vee\ .\\
                       \end{split}   \] 
                       
   (This fibration $\phi$ is also described in \cite[Section 2.1]{Saw} and \cite{BFu}.)           
\end{proof}

The following result generalizes proposition \ref{fibr} to Hilbert squares of other $K3$ surfaces:

\begin{proposition}[Hassett--Tschinkel \cite{HT}]\label{ht} Let $S$ be a general $K3$ surface of genus $g$, and let $X=S^{[2]}$ be the Hilbert scheme. Assume that $2g-2=2m^2$ for some integer $m>1$. Then $X$ admits a Lagrangian fibration
  \[ \phi\colon\ \ X\ \to\ \PP^2\ .\] 
  This fibration exists relatively, i.e. let $\XX\to B$ be the universal family of Hilbert squares of $K3$ surfaces of genus $g$ (notation \ref{fam2}), and let $\XX^0\to B^0$ denote the restriction to $K3$ surfaces of Picard number $1$. Then there exists a morphism
  \[ \phi_\XX\colon\ \  \XX^0\ \to\ \PP^2\times B^0 \]
  such that the restriction of $\phi_\XX$ to a fibre $X=X_b$ is the Lagrangian fibration $\phi$.
  \end{proposition}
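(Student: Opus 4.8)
The plan is to take the absolute statement --- existence of the Lagrangian fibration $\phi\colon X\to\PP^2$ --- from Hassett--Tschinkel \cite{HT}, and to obtain the relative version by spreading it out over $B^0$ via cohomology and base change. First I would recall why the numerical hypothesis singles out a candidate for $\phi$. For $S$ of Picard number $1$ with primitive polarization $H$ (so $H^2=2g-2$), the Beauville--Bogomolov lattice of $X=S^{[2]}$ is $\Pic(X)=\ZZ H_2\oplus\ZZ\delta$ with $q(H_2)=2g-2$, $q(\delta)=-2$ and $H_2\perp\delta$, where $H_2$ is induced from $H$ and $2\delta$ is the exceptional divisor of $S^{[2]}\to S^{(2)}$. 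The hypothesis $2g-2=2m^2$ says precisely that the primitive class $L:=H_2-m\delta$ is isotropic, $q(L)=0$. By the description of the nef cone of $S^{[2]}$ in \cite{HT}, together with the general theory of Lagrangian fibrations (Matsushita's theorem, and the SYZ property for $K3^{[2]}$--type varieties), $L$ --- or a suitable power $L^{\otimes k}$ --- is base--point--free and the associated morphism is a Lagrangian fibration with image $\PP^2$ (alternatively Hwang's theorem, or directly $h^0(X,L)=h^0(\PP^2,\OO(1))=3$, with the image a $k$--th Veronese surface when $k>1$).

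For the relative statement, the key observation is that both generators of the fibrewise Picard group exist over $B^0$: a relative polarization $\mathcal H$ of genus $g$ on $\Ss^0\to B^0$ induces a class $\mathcal H_2$ on $\XX^0$, and the relative exceptional divisor provides $\delta$. Hence $\mathcal L:=\mathcal H_2-m\delta\in\Pic(\XX^0)$ restricts on each fibre $X_b$ to the isotropic class above, so the absolute result applies for every $b\in B^0$. Writing $\pi\colon\XX^0\to B^0$, the dimensions $h^i(X_b,\mathcal L^{\otimes k}\vert_{X_b})$ are independent of $b$: indeed $\mathcal L^{\otimes k}\vert_{X_b}=\phi_b^\ast\OO_{\PP^2}(k)$, and the Leray spectral sequence for $\phi_b$ --- using $\phi_{b\ast}\OO_{X_b}=\OO_{\PP^2}$ and Matsushita's identification $R^j\phi_{b\ast}\OO_{X_b}=\Omega^j_{\PP^2}$ --- computes them out of the groups $H^p(\PP^2,\Omega^q_{\PP^2}(k))$, which do not see $b$. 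Since $B^0$ is reduced, the cohomology--and--base--change theorem then gives that $\pi_\ast\mathcal L^{\otimes k}$ is locally free on $B^0$, its formation commutes with base change, and $R^{>0}\pi_\ast\mathcal L^{\otimes k}=0$. Fibrewise base--point--freeness makes the evaluation $\pi^\ast\pi_\ast\mathcal L^{\otimes k}\to\mathcal L^{\otimes k}$ surjective, hence it defines a $B^0$--morphism $\phi_\XX\colon\XX^0\to\PP_{B^0}(\pi_\ast\mathcal L^{\otimes k})$ whose restriction to $X_b$ is (the $k$--th Veronese re--embedding of) $\phi_b$. The target is the projectivization of a genuine vector bundle, hence a Zariski--locally trivial $\PP^2$--bundle; after shrinking $B^0$ (harmless for all later applications) it becomes $\PP^2\times B^0$, giving $\phi_\XX\colon\XX^0\to\PP^2\times B^0$ as claimed.

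The main obstacle is the input from \cite{HT}: showing that the isotropic class $L$ is actually \emph{nef} --- equivalently base--point--free up to a power --- rather than merely lying in the positive cone, and hence that it defines a genuine morphism rather than a rational map. This is where the structure of the nef cone of $S^{[2]}$ and the SYZ property for $K3^{[2]}$--type varieties are used. Granting that, the relative part is a routine base--change argument, whose only delicate point is the local constancy on $B^0$ of the relevant cohomology; this is why I would phrase everything in terms of the single line bundle $\mathcal L=\mathcal H_2-m\delta$ and invoke Matsushita's description of $R^j\phi_{b\ast}\OO_{X_b}$.
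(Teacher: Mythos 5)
Your argument is sound in substance but takes a genuinely different route from the paper. The paper disposes of the relative statement in two lines: for $m=2$ it is read off from Mukai's explicit description of $\phi$ via the net of quadrics (proposition \ref{fibr}), and for $m>2$ it is attributed to the deformation--theoretic argument proving \cite{HT}. You instead reconstruct the relative fibration intrinsically: the isotropic class $\mathcal{L}=\mathcal{H}_2-m\delta$ exists on the whole family $\XX^0$, fibrewise it is $\phi_b^\ast\OO_{\PP^2}(1)$, Matsushita's theorem plus Bott vanishing on $\PP^2$ give $h^0(X_b,\mathcal{L}\vert_{X_b})=3$ and $h^{>0}=0$ independently of $b$, and Grauert/cohomology--and--base--change together with fibrewise global generation yield a $B^0$--morphism to the $\PP^2$--bundle $\PP_{B^0}(\pi_\ast\mathcal{L})$, trivialized after shrinking. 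This buys a more self--contained and canonical construction, independent of the specific deformation argument of \cite{HT}, at the cost of two points you should tidy up. First, keep $k=1$ throughout: since $\phi_b^\ast\OO_{\PP^2}(1)$ is globally generated with exactly three sections, no power is needed, and for $k>1$ the projectivization $\PP(\pi_\ast\mathcal{L}^{\otimes k})$ has fibre dimension $\binom{k+2}{2}-1>2$, so it is not a $\PP^2$--bundle as you assert. Second, your fibrewise input (existence of $\phi_b$, hence the value of $h^0$ and base--point--freeness) is only guaranteed for general $b$, so you need semicontinuity of $h^0$ and openness of global generation to pass to a nonempty Zariski open of $B^0$, and a further shrinking to trivialize the bundle; the resulting morphism is therefore defined only over a dense open subset of $B^0$, not over all of $B^0$ as the proposition literally states. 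This weakening is harmless for every use made of the proposition in the paper (the spread arguments only need the fibration to exist relatively over a dense open base, exactly as in the $g=9$ Hilbert cube case, where only an almost holomorphic relative fibration is available), but it should be stated explicitly.
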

  
  \begin{proof} The construction of $\phi$ is \cite[Proposition 7.1]{HT}. 
  
  It remains to see that the fibration exists relatively. This is clear for $m=2$ from the explicit description of $\phi$ given by proposition \ref{fibr}. For $m>2$, it follows from the deformation theoretic argument proving \cite[Proposition 7.1]{HT}.   
  
  \end{proof}
  
\begin{proposition}[Iliev--Ranestad \cite{IR}]\label{ir} Let $S$ be a general $K3$ surface of genus $9$, and let $X=S^{[3]}$ be the Hilbert cube. Then $X$ admits a Lagrangian fibration
  \[ \phi\colon\ \ X\ \to\ \PP^3\ .\] 
  This fibration exists relatively, i.e. let $\XX\to B$ be the universal family of Hilbert cubes of $K3$ surfaces of genus $9$. Then there exists an
  almost holomorphic fibration
 \[ \phi_\XX\colon\ \ \ \XX\ \dashrightarrow\ \PP^3\times B\ ,\]
 such that the restriction of $\phi_\XX$ to a general fibre $X=X_b$ is the Lagrangian fibration $\phi$.
      \end{proposition}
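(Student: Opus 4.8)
The plan is to exploit the fact that, by notation~\ref{mukai}, a general $K3$ surface $S$ of genus $9$ is a transverse codimension-$4$ linear section of the \emph{fixed} Lagrangian Grassmannian $LG(3,6)\subset\PP^{13}$ in its Pl\"ucker embedding, so that whatever Iliev--Ranestad \cite{IR} construct for one such $S$ is automatically available for the entire family $\Ss_9\to B_9$. The first step is to recall from \cite{IR} the description of the fibration: writing $S=LG(3,6)\cap\Lambda$ with $\Lambda\subset\PP^{13}$ of codimension $4$, one attaches to a general length-$3$ subscheme $\xi\subset S$ a point $\phi(\xi)$ in the dual space $\PP^3=\Lambda^\perp\subset(\PP^{13})^\vee$ by a construction carried out entirely inside the fixed ambient pair $(\PP^{13},LG(3,6))$, and the resulting rational map $X=S^{[3]}\dashrightarrow\PP^3$ is shown to extend to a morphism realizing a Lagrangian fibration. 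As a sanity check one can note that the existence of \emph{some} rational Lagrangian fibration is already forced lattice-theoretically: for $S$ of Picard number $1$ one has $\Pic(S^{[3]})=\ZZ h\oplus\ZZ\delta$ with Beauville--Bogomolov form $q=\langle 2g-2\rangle\oplus\langle -4\rangle=\langle 16\rangle\oplus\langle -4\rangle$, which represents $0$ (e.g.\ $q(h-2\delta)=0$), so by the known results on Lagrangian fibrations of hyperk\"ahler manifolds of $K3^{[3]}$-type some birational model of $X$ fibres over $\PP^3$; the substantive input of \cite{IR} is that for general $S$ no birational modification is necessary.

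The second step is to globalize. Since $S_b=LG(3,6)\cap\Lambda_b$ is cut out of the \emph{same} variety $LG(3,6)$ for every $b\in B_9$, forming $\Lambda_b^\perp\subset(\PP^{13})^\vee$ and running the Iliev--Ranestad construction fibrewise produces, without any auxiliary choices, a relative rational map
  \[ \phi_\XX\colon\ \ \XX\ \dashrightarrow\ \PP^3\times B_9 \quad\text{over}\ B_9\ ,\qquad \phi_\XX\vert_{X_b}=\phi\ \text{ for general }b\ ;\]
in particular no \'etale base change or shrinking of $B_9$ is needed merely to define $\phi_\XX$ as a rational map. The third step is to check that $\phi_\XX$ is almost holomorphic, i.e.\ that its indeterminacy locus does not dominate $B_9$. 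This is a semicontinuity argument: ``$\phi_\XX$ restricts to an everywhere-defined morphism on $X_b$'' is an open condition on $b$, and it holds for the general member by \cite{IR}, hence on a Zariski-dense open $B_9^\circ\subset B_9$; so a general fibre of $\XX\to B_9$ meets the indeterminacy locus of $\phi_\XX$ in a proper closed subset, $\phi_\XX$ is almost holomorphic, and $\phi_\XX\vert_{X_b}$ is the Lagrangian fibration $\phi$ for general $b$.

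The main obstacle I anticipate is the passage from ``general $S$'' in \cite{IR} to the density of the locus $B_9^\circ$ on which the fibration is a genuine morphism on $X_b$: one must rule out the construction acquiring fibre-wide indeterminacy along a divisor in $B_9$. But morphism-ness of the canonical resolution of a family of rational maps is a constructible (indeed open) condition on the base, and \cite{IR} establishes it for the general member, so this is precisely the genericity already built into loc.\ cit. --- and ``almost holomorphic'' is all the statement demands, which is why (in contrast to the Hassett--Tschinkel case of proposition~\ref{ht}, where the construction is explicitly linear-algebraic) one does not attempt here to produce an honest morphism $\XX^0\to\PP^3\times B^0$ over the Picard-number-$1$ locus.
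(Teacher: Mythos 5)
Your proposal is correct and follows essentially the same route as the paper: invoke Iliev--Ranestad for the fibrewise construction of $\phi$ inside the fixed ambient $LG(3,6)$, note that this construction globalizes over $B_9$ to a relative rational map $\phi_\XX$, and deduce almost-holomorphy from the fact that the restriction to a general fibre $X_b$ is a morphism. The only differences are cosmetic (you describe the target $\PP^3$ as the dual of $\Lambda$ rather than via the net of genus-$9$ Fano threefolds and twisted cubics, and you add an optional lattice-theoretic sanity check), so nothing further is needed.
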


 \begin{proof} The construction is inspired by Mukai's construction (proposition \ref{fibr}); the ambient space $\PP^5$ in Mukai's construction is replaced by the lagrangian Grassmannian $LG(3,6)$, and lines in Mukai's construction are replaced by twisted cubic curves. Let $N\cong \PP^3$ be the space of genus $9$ prime Fano threefolds $Y_h$
 in $LG(3,6)$ containing $S$. As shown in \cite{IR}, a general length $3$ subscheme $\xi$ in $S$ determines a unique twisted cubic curve $C_\xi$ in $LG(3,6)$. There is a unique element $Y_h$ in $N$ containing $C_\xi$; this determines the morphism $\phi\colon X\to\PP^3$.
 
 As for the second assertion, this follows from the fact that this construction can be done relatively over $B$. The upshot is a rational map
  \[ \phi_\XX\colon\ \ \ \XX\ \dashrightarrow\ \PP^3\times B\ .\] 
  Since it is shown in loc. cit. that the restriction of $\phi_\XX$ to a general fibre is a morphism, the map $\phi_\XX$ is almost holomorphic.
  
  \end{proof}

\begin{remark} Generalizations of proposition \ref{ir} to Hilbert schemes $S^{[r]}$, for certain other values of the genus of $S$ and of $r$, are given in \cite{Saw} and \cite{Mar}.
\end{remark}

 \section{An intermediate result}

In this section, we prove a hard Lefschetz result for the Chow groups of $S^m$ (theorem \ref{hard}) and $S^{[m]}$ (corollary \ref{hardhilb}), where $S$ is a low genus $K3$ surface.
This will be an ingredient in the proof of the main result (theorem \ref{main}) in the next section.

 \begin{theorem}\label{hard} Let $\Ss\to B$ be the universal family of $K3$ surfaces of genus $g$, where $2\le g\le 10$ (cf. subsection \ref{ssfam}). Let $L\in A^1(\Ss^{m/B})$ be a line bundle such that the restriction $L_b$ (to the fibre over $b\in B$) is big for very general $b\in B$.
   Then
     \[  \cdot (L_b)^{2m-2}\colon\ \ \ A^2_{(2)}((S_b)^m)\ \to\ A^{2m}_{(2)}((S_b)^m) \]
     is an isomorphism for all $b\in B$.
     
     Moreover, there exists $C_b\in A^2((S_b)^m\times(S_b)^m)$ inducing the inverse isomorphism. 
       \end{theorem}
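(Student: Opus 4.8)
The strategy is to reduce the statement for $(S_b)^m$ to the analogous "hard Lefschetz" statement for a single $K3$ surface $S_b$, and then to prove that $K3$ surface statement by spreading, i.e. by Proposition~\ref{voisin1}. Concretely, on a single $K3$ surface $S$ one has $A^2_{(2)}(S) = A^2_{hom}(S) = A^2_{AJ}(S)$ (the full Albanese-trivial part, since $A^2_{(0)}(S) = \QQ\mathfrak{o}_S$), and for an ample — or merely big — divisor $H$ on $S$ there is no "Lefschetz operator" in codimension to apply; so the right formulation at the level of $S$ is rather that multiplication by a suitable correspondence recovers the identity on $A^2_{hom}(S)$. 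The point is that Propositions~\ref{prod2} and~\ref{prod3} already provide, for the product $(S_b)^m$, relative correspondences $\Theta_i,\Xi_i$ (resp.\ $\Theta'_i,\Xi'_i$) realizing $(\Pi^{(S_b)^m}_2)_\ast$, resp.\ $(\Pi^{(S_b)^m}_{4m-2})_\ast$, as compositions that factor through $m$ copies of $A^2(S_b)$. Thus $A^2_{(2)}((S_b)^m)$ and $A^{2m}_{(2)}((S_b)^m)$ are, functorially in $b$, direct summands of $\bigoplus_{i=1}^m A^2_{hom}(S_b)$, cut out by relative projectors, and the whole problem becomes: show that the relative correspondence $R := \bigl(\Xi\circ(\cdot L^{2m-2}\text{ on }(S_b)^m)\circ\Theta - \text{identity on }\bigoplus A^2_{hom}(S_b)\bigr)$ — rephrased as a genuine cycle class in $A^2(\Ss^{m'/B}\times_B\Ss^{m'/B})$ for appropriate $m'$, or more efficiently as a correspondence between single-surface fibre products — vanishes fibrewise in cohomology, hence (by spreading) fibrewise in Chow.

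In more detail, I would proceed as follows. \textbf{Step 1.} Using Propositions~\ref{prod2} and~\ref{prod3}, write the "big divisor power" map $\cdot(L_b)^{2m-2}\colon A^2_{(2)}((S_b)^m)\to A^{2m}_{(2)}((S_b)^m)$ as induced by a relative correspondence $M_L \in A^{2m}(\Ss^{m/B}\times_B\Ss^{m/B})$, namely $M_L = \Gamma_{\Delta}\cdot (p_2^\ast L^{2m-2})$ composed on both sides with the relevant product Chow–K\"unneth projectors $\Pi^{\Ss^{m/B}}_2$ and $\Pi^{\Ss^{m/B}}_{4m-2}$. \textbf{Step 2.} Conjugate $M_L$ by the relative correspondences $\Xi_i,\Theta_i$ from Proposition~\ref{prod3} and $\Theta'_i,\Xi'_i$ from Proposition~\ref{prod2}: this produces a relative correspondence $R \in A^2(\Ss^{?/B}\times_B\Ss^{?/B})$ of the form "$m\times m$ matrix of correspondences $\Ss\times_B\Ss$", whose fibrewise action on $\bigoplus_{i=1}^m A^2_{hom}(S_b)$ equals (conjugate of) the action of $\cdot(L_b)^{2m-2}$ on $A^2_{(2)}((S_b)^m)$. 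Crucially $R$ has been arranged to land in codimension $2$ relative correspondences between $K3$ surfaces, so Proposition~\ref{voisin1} is applicable once I realize $\Ss\to B$ (after shrinking $B$) as a universal family of complete intersections in a variety $M$ with trivial Chow groups — which is exactly what the Mukai models of Notation~\ref{mukai} provide: $\PP_g$ has trivial Chow groups and $\Ss_g$ is the universal smooth zero locus of sections of $U_g$, a direct sum of line bundles that can be taken very ample after twisting. \textbf{Step 3.} Check the cohomological statement: for very general $b$, multiplication by $(L_b)^{2m-2}\colon H^{2,2}_{tr}((S_b)^m)\to H^{2m+2,2m+2}_{tr}((S_b)^m)$ — or rather the transcendental/$(2)$-part of $H^4$ to $H^{4m}$ — is an isomorphism by the classical Hard Lefschetz theorem (for big, hence after modification ample, line bundles, or directly for $L_b$ big using that $H^4_{(2)}$ is a sub-Hodge-structure generated by the transcendental part of each $K3$ factor). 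Therefore $R|_{S_b\times S_b}$ (the "error term" $\Xi\circ M_L\circ\Theta$ minus the identity projector, transported to the $K3$ level) is cohomologically trivial on fibres. \textbf{Step 4.} Apply Proposition~\ref{voisin1}: there is $\delta\in A^2(M\times M)$ with $R|_{Y_b\times Y_b} = \delta|_{Y_b\times Y_b}$ for all $b$; but since $R$ is homologically trivial fibrewise and $\delta|_{Y_b\times Y_b}$ is then forced to be homologically trivial too, and $A^2_{hom}(Y_b\times Y_b)$ injects into... — more simply, repeat the spreading argument's endgame: the blow-up of $\YY\times_B\YY$ along the diagonal has trivial Chow groups, so $A^2_{hom}(\YY\times_B\YY)=0$, forcing $R = 0$ relatively, hence $R|_{S_b\times S_b}=0$ in Chow for all $b$. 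This gives that the conjugated map is the identity on $\bigoplus A^2_{hom}(S_b)$, i.e.\ $\cdot(L_b)^{2m-2}$ is an isomorphism $A^2_{(2)}((S_b)^m)\xrightarrow{\sim}A^{2m}_{(2)}((S_b)^m)$ for all $b$; composing the correspondences backwards exhibits the inverse as induced by an explicit $C_b\in A^2((S_b)^m\times(S_b)^m)$, namely the restriction of $\Theta\circ(\text{splitting of }M_L)\circ\Xi$.

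\textbf{Main obstacle.} The technical heart is \textbf{Step 2}: massaging the genuinely codimension-$2m$ operator "$\cdot L^{2m-2}$ on $(S_b)^m$" into a \emph{codimension-$2$} relative correspondence between (fibre products of a single) $K3$ surface, so that Voisin's spreading principle — which is stated only for codimension-$2$ cycles on surfaces, and only there avoids the Voisin standard conjecture — actually applies. This is exactly where Propositions~\ref{prod2}/\ref{prod3} do the work: they show $A^{2m}_{(2)}((S_b)^m)$ and $A^2_{(2)}((S_b)^m)$ are retracts of $\bigoplus_{i=1}^m A^2(S_b)$ via relative correspondences, so the operator becomes an $m\times m$ matrix whose entries are relative correspondences in $A^2(\Ss\times_B\Ss)$ — at which point Proposition~\ref{voisin1} bites. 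A secondary nuisance is that $L$ is only assumed big (not ample), so the classical Hard Lefschetz in Step 3 must be applied after a birational modification, or one argues directly that $\cdot L^{2m-2}$ is injective on the relevant transcendental piece using that big divisors still have the right positivity on the primitive cohomology of a $K3$; and one must be careful that "very general $b$" in the cohomological input upgrades to "all $b$" via the Hilbert-scheme argument built into Proposition~\ref{voisin1}.
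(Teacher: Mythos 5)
Your overall architecture is the paper's: realize $\cdot L^{2m-2}$ as a relative correspondence sandwiched between the product Chow--K\"unneth projectors, use the correspondences of Propositions \ref{prod2} and \ref{prod3} to transport the problem to an $m\times m$ matrix of codimension-$2$ relative correspondences in $A^2(\Ss\times_B\Ss)$, apply Proposition \ref{voisin1}, and conclude with the ``supported on divisors acts trivially'' and ``very general to all $b$'' bookkeeping. Up to that point you have reconstructed the paper's strategy.

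The gap is in Steps 3--4, i.e.\ exactly at the cohomological input. The object you feed into Proposition \ref{voisin1} is $R=\Xi\circ M_L\circ\Theta-\mathrm{id}$, and you justify its fibrewise homological triviality by hard Lefschetz. But hard Lefschetz (even granting a version for big classes) only says that the conjugated operator is an \emph{isomorphism} on the transcendental part of $H^2$ of the fibres; it is by no means the \emph{identity}, so $R\vert_b$ is not homologically trivial and Proposition \ref{voisin1} does not apply to it. What is needed first is a correspondence $C_b\in A^2((S_b)^m\times(S_b)^m)$ inducing the inverse of $\cup (L_b)^{2m-2}\colon H^2/N^1\to H^{4m-2}/N^{2m-1}$ in cohomology; only after composing with $C_b$ and subtracting the projector does one obtain a fibrewise homologically trivial relative correspondence to which the $\Theta/\Xi$-conjugation and Proposition \ref{voisin1} can be applied. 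Producing $C_b$ is a genuine step: the paper gets it from L.~Fu's theorem (Proposition \ref{input}), whose proof uses the Lefschetz standard conjecture for products of surfaces and semisimplicity of numerical motives -- your phrase ``composing the correspondences backwards / splitting of $M_L$'' presupposes precisely this missing construction. Two further points you gloss over: (i) classical hard Lefschetz fails for big classes and ``pass to a modification'' does not repair it; the correct statement, valid only modulo coniveau ($N^1$ resp.\ $N^{2m-1}$), is again Fu's result, and the resulting error cycles supported on $D_b\times D_b$ must be spread out (Voisin's Hilbert-scheme argument, which is also what glues the fibrewise $C_b$ into a single relative $\Cc$) and shown to act trivially on $A^2_{hom}$ and $A^{2m}$ of a general fibre; (ii) without these corrections the endgame ``$A^2_{hom}(\YY\times_B\YY)=0$ forces $R=0$'' has nothing homologically trivial to act on. So the skeleton is right, but the heart of the proof -- the construction of the inverse correspondence and the handling of the coniveau error terms -- is missing.
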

      
   \begin{proof} This is proven using the technique of spread as developed by Voisin \cite{V0}, \cite{V1}.
   Let us write
    \[ \Gamma_{L^{2m-2}}:=  (p_1)^\ast(L^{2m-2})\cdot \Delta_{\Ss^{m/B}}\ \ \ \in A^{4m-2}\bigl( (\Ss^{m/B})\times_B (\Ss^{m/B})\bigr)\ ,\]
    where 
     \[ \Delta_{\Ss^{m/B}}\ \subset\    (\Ss^{m/B})\times_B (\Ss^{m/B})\] 
     is the relative diagonal, and
     \[ p_1\colon\ \ \  (\Ss^{m/B})\times_B (\Ss^{m/B})\ \to\ \Ss^{m/B} \]
     is projection on the first factor. The relative correspondence $ \Gamma_{L^{2m-2}}$ acts on Chow groups as multiplication by $L^{2m-2}$.
     
     As ``input'', we will make use of the following result:
     
     \begin{proposition}[L. Fu \cite{LFu0}]\label{input} Let $X$ be a smooth projective variety of dimension $n$ verifying the Lefschetz standard conjecture $B(X)$. Let $L\in A^1(X)$
      be a big line bundle. Then
        \[ \cup L^{n-2}\colon\ \ \ H^2(X)/N^1 H^2(X)\ \to\ H^{2n-2}(X)/N^{n-1}H^{2n-2}(X) \]
        is an isomorphism. (Here $N^\ast$ denotes the coniveau filtration \cite{BO}, so $N^i H^{2i}(X)$ is the image of the cycle class map.)
        Moreover, there is a correspondence $C\in A^2(X\times X)$ inducing the inverse isomorphism.
        \end{proposition}
        
        \begin{proof} The first statement is (a special case of) \cite[Theorem 4.11]{LFu0}, and the second statement
        follows from the proof of \cite[Theorem 4.11]{LFu0}. Alternatively, for the second statement one could reason as follows: it follows from \cite[Lemma 3.3]{LFu0} that
          \[  \cup L^{n-2}\colon\ \ \ H^2(X)/N^1 H^2(X)\ \to\ H^{2n-2}(X)/N^{n-1}H^{2n-2}(X) \]
        is an isomorphism. Since the category of motives for numerical equivalence $\MM_{\rm num}$ is semisimple \cite{J0}, it follows that there is an isomorphism of motives
        \[   h^2(X)\oplus \bigoplus_i \LLL(m_i)\ \cong\  h^{2n-2}(X)(n-2)\oplus \bigoplus_j \LLL(m_j)\ \ \ \hbox{in}\ \MM_{\rm num}\ ,\]
        where the arrow from $h^2(X)$ to $h^{2n-2}(X)(n-2)$ is given by $\Gamma_{L^{n-2}}\in A^{2n-2}(X\times X)$, and $\LLL$ denotes the Lefschetz motive.
        Since homological and numerical equivalence coincide for $X$ and for $\LLL$, this implies there is also an isomorphism
        \[   h^2(X)\oplus \bigoplus_i \LLL(m_i)\ \cong\  h^{2n-2}(X)(n-2)\oplus \bigoplus_j \LLL(m_j)\ \ \ \hbox{in}\ \MM_{\rm hom}\ ,\]
        with the arrow from $h^2(X)$ to $h^{2n-2}(X)(n-2)$ being given by $\Gamma_{L^{n-2}}$. It follows that there exists a correspondence $C$ as required.
                     \end{proof}

     Any fibre $(S_b)^m$ of the family $\Ss^{m/B}\to B$ verifies the Lefschetz standard conjecture (the Lefschetz standard conjecture is known for products of surfaces \cite{K}). Applying proposition \ref{input}, this means that for all $b\in B$ there exists a correspondence
      \[ C_b\ \ \in A^{2}\bigl(   (S_b)^m\times     (S_b)^m\bigr) \]
     with the property that the compositions
      \[  
      H^2\bigl(  (S_b)^m\bigr)/N^1 \ \xrightarrow{\cdot    (L_b)^{2m-2}}\ H^{4m-2}\bigl( (S_b)^{m}\bigr)/N^{2m-1}\ \xrightarrow{ (C_b)_\ast} H^2\bigl( (S_b)^m\bigr)/N^1 \]
      and
       \[  
      H^{4m-2}\bigl(  (S_b)^m\bigr)/N^{2m-1} \ \xrightarrow{(C_b)_\ast}\ H^{2}\bigl( (S_b)^{m}\bigr)/N^1\ \xrightarrow{\cdot    (L_b)^{2m-2} } H^{4m-2}\bigl( (S_b)^m
      \bigr)/N^{2m-1} \]
             are the identity. In other words, for all $b\in B$ there exist 
             \[\gamma_b\ ,\ \ \  \gamma_b^\prime\in 
             A^{2m}\bigl(   (S_b)^m\times     (S_b)^m\bigr)\] 
             supported on $D_b\times D_b\subset    (S_b)^m\times     (S_b)^m$ for some divisor $D_b\subset (S_b)^m$ and such that
      \[   \begin{split}   \Pi_2^{\Ss^{m/B}}\vert_{(S_b)^m}    \circ C_b\circ \bigl( \Pi_{4m-2}^{\Ss^{m/B}}\circ \Gamma_{L^{2m-2}}\circ \Pi_2^{\Ss^{m/B}}\bigr)\vert_{(S_b)^m}  &= 
          \Pi_2^{\Ss^{m/B}}\vert_{(S_b)^m} +\gamma_b\  
      \ ,\\
              \Pi_{4m-2}^{\Ss^{m/B}}\vert_{(S_b)^m}    \circ     \bigl( \Gamma_{L^{2m-2}}       \circ  \Pi_{2}^{\Ss^{m/B}}\bigr)\vert_{(S_b)^m}\circ  C_b \circ \Pi_{4m-2}^{\Ss^{m/B}}\vert_{(S_b)^m} &= \Pi_{4m-2}^{\Ss^{m/B}}\vert_{(S_b)^m} +\gamma_b^\prime\\
                 &\ \ \ \ \ \ \hbox{in}\ H^{4m}\bigl( (S_b)^m\times (S_b)^m\bigr)\ .\\  
                       \end{split}    \]
    Applying a Hilbert schemes argument as in \cite[Proposition 3.7]{V0} (cf. also \cite[Proposition 2.10]{excubic4}), we can find a relative correspondence 
     \[ \Cc\in A^2\bigl(  (\Ss^{m/B})\times_B (\Ss^{m/B})\bigr) \]
     doing the same job as the various $C_b$, i.e. such that for all $b\in B$ one has
      \[ \begin{split}  ( \Pi_2^{\Ss^{m/B}}\circ \Cc\circ \Pi_{4m-2}^{\Ss^{m/B}}\circ \Gamma_{L^{2m-2}}\circ \Pi_2^{\Ss^{m/B}})\vert_{(S_b)^m}  &= \Pi_2^{\Ss^{m/B}}\vert_{(S_b)^m} +\gamma_b\ ,\\
            ( \Pi_{4m-2}^{\Ss^{m/B}}\circ\Gamma_{L^{2m-2}}\circ \Pi_{2}^{\Ss^{m/B}}\circ \Cc\circ \Pi_{4m-2}^{\Ss^{m/B}})\vert_{(S_b)^m}  &= \Pi_{4m-2}^{\Ss^{m/B}}\vert_{(S_b)^m}+\gamma_b^\prime\\
           &\ \ \ \ \ \  \ \hbox{in}\ H^{4m}\bigl( (S_b)^m\times (S_b)^m\bigr)\ .
            \\  \end{split}       \]   
     Applying once more the same Hilbert schemes argument \cite[Proposition 3.7]{V0}, we can also find a divisor $\DD\subset \Ss^{m/B}$ and relative correspondences
     \[ \gamma\ ,\ \ \ \gamma^\prime\ \ \ \in A^{2m}\bigl( \Ss^{m/B}\times_B \Ss^{m/B}\bigr) \]
     supported on $\DD\times_B \DD$ and doing the same job as the various $\gamma_b$, resp. $\gamma_b^\prime$. That is, $\gamma$ and $\gamma^\prime$ are such that for all $b\in B$ one has
      \[ \begin{split}  (\Pi_2^{\Ss^{m/B}}\circ \Cc\circ \Pi_{4m-2}^{\Ss^{m/B}}\circ \Gamma_{L^{2m-2}}\circ \Pi_2^{\Ss^{m/B}})\vert_{(S_b)^m}  &= (\Pi_2^{\Ss^{m/B}}+\gamma)\vert_{(S_b)^m}\ ,\\
            ( \Pi_{4m-2}^{\Ss^{m/B}}\circ \Gamma_{L^{2m-2}}\circ \Pi_{2}^{\Ss^{m/B}}\circ \Cc\circ \Pi_{4m-2}^{\Ss^{m/B}})\vert_{(S_b)^m}  &= (\Pi_{4m-2}^{\Ss^{m/B}}+\gamma^\prime)
            \vert_{(S_b)^m} \\ 
        &\ \ \ \ \ \ \     \ \hbox{in}\ H^{4m}\bigl( (S_b)^m\times (S_b)^m\bigr)\ .
            \\  \end{split}       \]                
           
      We now make an effort to rewrite this more compactly: the relative correspondences defined as
      \begin{equation}\label{defg}\begin{split} \Gamma&:=  \Pi_2^{\Ss^{m/B}}\circ \Cc\circ \Pi_{4m-2}^{\Ss^{m/B}}\circ \Gamma_{L^{2m-2}}\circ \Pi_2^{\Ss^{m/B}}  -     \Pi_2^{\Ss^{m/B}} -\gamma \ ,\\
              \Gamma^\prime&:= \Pi_{4m-2}^{\Ss^{m/B}}\circ \Gamma_{L^{2m-2}}\circ \Pi_{2}^{\Ss^{m/B}}\circ \Cc\circ \Pi_{4m-2}^{\Ss^{m/B}}  -     \Pi_{4m-2}^{\Ss^{m/B}} -\gamma^\prime\ \ \ \in A^{2m}  
              \bigl( (\Ss^{m/B})
      \times_B  (\Ss^{m/B})\bigr)  \\
      \end{split}        \end{equation}
      have the property that their restriction to any fibre is homologically trivial. That is, writing
      \[ \begin{split}  \Gamma_b &:= \Gamma\vert_{(S_b)^m\times (S_b)^m} \\
         \Gamma^\prime_b&:=  (\Gamma^\prime)\vert_{(S_b)^m\times (S_b)^m} \ \ \ \in A^{2m}_{}\bigl(  (S_b)^m\times (S_b)^m\bigr)\\
         \end{split}\]
         for the restriction to a fibre, we have that     
       \begin{equation}\label{hom0}  \Gamma_b\ ,\ \ \ \Gamma_b^\prime \ \ \ \in A^{2m}_{hom}\bigl(  (S_b)^m\times (S_b)^m\bigr)\ \ \ \forall b\in B\ .\end{equation}
       
      Let us now define the modified relative correspondences
      \[ \begin{split}   \Gamma_1 &:=\Pi_2^{\Ss^{m/B}}\circ \Gamma\circ \Pi_2^{\Ss^{m/B}}\ ,\\ 
                               \Gamma_1^\prime&:= \Pi_{4m-2}^{\Ss^{m/B}}\circ \Gamma^\prime\circ  \Pi_{4m-2}^{\Ss^{m/B}}\ \ \ \in A^{2m} \bigl( \Ss^{m/B}\times_B \Ss^{m/B}\bigr)\ .\\
                              \end{split}\]
                              
        This modification does not essentially modify the fibrewise rational equivalence class: we have
        
        \begin{equation}\label{modif} \begin{split}   
                   (\Gamma_1)_b     &= \Gamma_b + (\gamma_1)_b\ ,\\
                 (\Gamma^\prime_1)_b     &= (\Gamma^\prime)_b + (\gamma^\prime_1)_b\ \ \ \hbox{in}\  A^{2m}\bigl( (S_b)^m\times (S_b)^m\bigr)\ ,\\
                 \end{split}\end{equation}
                 where $\gamma_1, \gamma_1^\prime\in A^{2m}\bigl(\Ss^{m/B}\times_B \Ss^{m/B}\bigr)$ are relative correspondences supported on $\DD\times_B \DD$.
                 (Indeed, this is true because $(\Pi_i^{(S_b)^m})^{\circ 2}=\Pi_i^{(S_b)^m}$ for all $i$, and the relative correspondences 
                 \[\Pi_2^{\Ss^{m/B}}\circ \gamma\circ \Pi_2^{\Ss^{m/B}}\ , \ \ \Pi_{4m-2}^{\Ss^{m/B}}\circ \gamma^\prime\circ \Pi_{4m-2}^{\Ss^{m/B}}\] 
                 are still supported on $\DD\times_B \DD$.)       
                 
     As $\Gamma$ and $\Gamma^\prime$ were fibrewise homologically trivial (equation (\ref{hom0})), the same is true for $\Gamma_1$ and $\Gamma_1^\prime$:
     
     \begin{equation}\label{hom1}  (\Gamma_1)_b\ ,\ \ \ (\Gamma_1^\prime)_b \ \ \ \in A^{2m}_{hom}\bigl(  (S_b)^m\times (S_b)^m\bigr)\ \ \ \forall b\in B\ ,\end{equation}
       
We now proceed to upgrade (\ref{hom1}) to a statement concerning the action on Chow groups:

\begin{claim}\label{rat1} We have
  \[ \begin{split}  \bigl((\Gamma_1)_b\bigr){}_\ast &=0 \colon\ \ \  A^i_{hom}\bigl((S_b)^m\bigr)  \ \to\  A^i_{hom}\bigl((S_b)^m\bigr)\ \ \ \forall b\in B\ ,\\
                            \bigl((\Gamma^\prime_1)_b\bigr){}_\ast  &=0 \colon \ \ \  A^i_{hom}\bigl((S_b)^m\bigr)  \ \to\  A^i_{hom}\bigl((S_b)^m\bigr)\ \ \ \forall b\in B\ .\\
                            \end{split}\]
                            \end{claim}
     
Let us prove claim \ref{rat1} for $\Gamma_1$ (the argument for $\Gamma_1^\prime$ is only notationally different). Using proposition \ref{prod3}, one finds there is a fibrewise equality modulo rational equivalence
  \begin{equation}\label{same} (\Gamma_1)_b = \Bigl( (\sum_{i=1}^m \Xi_i\circ \Theta_i)\circ \Gamma\circ (\sum_{i=1}^m  \Xi_i\circ \Theta_i ) \Bigr){}_b\ \ \ \hbox{in}\     A^{2m}\bigl( (S_b)^m\times (S_b)^m\bigr)\ \ \ \forall b\in B\ .\end{equation}
  To rewrite this, let us define relative correspondences
  \[ \Gamma_{k,\ell}:= \Theta_k\circ \Gamma\circ \Xi_\ell\ \ \ \in A^{2}\bigl( \Ss^{}\times_B \Ss^{}\bigr)\ \ \ ( 1\le k,\ell\le m)\ .\]
  With this notation, equality (\ref{same}) becomes the equality
  \begin{equation}\label{same2} (\Gamma_1)_b = \Bigl(  \sum_{k=1}^m \sum_{\ell=1}^m \Xi_k\circ \Gamma_{k,\ell}\circ \Theta_\ell\Bigr){}_b    \ \ \ \hbox{in}\     A^{2m}\bigl( (S_b)^m\times (S_b)^m\bigr)\ \ \ \forall b\in B\ .\end{equation}
  
  As $\Gamma$ is fibrewise homologically trivial (equation (\ref{hom0})), the same is true for the various $\Gamma_{k,\ell}$:
  \[  (\Gamma_{k,\ell})_b \ \ \ \in A^{2}_{hom}(  S_b\times S_b)\ \ \ \forall b\in B\  \ \   ( 1\le k,\ell\le m)\ .\]  
 This means that we can apply Voisin's key result, proposition \ref{voisin1}, to the relative correspondence $\Gamma_{k,\ell}$. The conclusion is that for each $1\le k,\ell\le m$, there exists a cycle $\delta_{k,\ell}\in A^2(\PP\times \PP)$ (where $\PP=\PP_g$ is the homogeneous variety as in subsection \ref{ssfam}) such that
   \[  (\Gamma_{k,\ell})_b  +(\delta_{k,\ell})_b=0 \ \ \ \hbox{in}\ A^{2}_{}(  S_b\times S_b)\ \ \ \forall b\in B\  \ \ .\]    
   Since $\PP$ has trivial Chow groups, this implies in particular that
   \[  \bigl(  (\Gamma_{k,\ell})_b \bigr){}_\ast =0\colon\ \ \ A^i_{hom}(S_b)\ \to\ A^i_{hom}(S_b)\ \ \ \forall b\in B\ .\]
   In view of equality (\ref{same2}), this implies
  \[  \bigl((\Gamma_1)_b\bigr){}_\ast=0 \colon\ \ \   A^i_{hom}\bigl((S_b)^m\bigr)  \ \to\  A^i_{hom}\bigl((S_b)^m\bigr)\ \ \ \forall b\in B\   ,\]    
  as claimed.
  
  (The argument for $\Gamma_1^\prime$ is the same; it suffices to replace the use of proposition \ref{prod3} by proposition \ref{prod2}.)
  Claim \ref{rat1} is now proven.
  
  It is now time to wrap up the proof of theorem \ref{hard}. For $b\in B$ general, the restrictions $(\gamma_1)_b, (\gamma_1^\prime)_b$ of equation (\ref{modif}) will be supported on $D_b\times D_b\subset (S_b)^m\times (S_b)^m$, where $D_b\subset (S_b)^m$ is a divisor. As such, the action
    \[ \begin{split}   \bigl( (\gamma_1)_b\bigr){}_\ast\colon\ \ \ &R\bigl( (S_b)^m\bigr)\ \to\ R\bigl( (S_b)^m\bigr)\ ,\\       
                           \bigl( (\gamma^\prime_1)_b\bigr){}_\ast\colon\ \ \ &R\bigl( (S_b)^m\bigr)\ \to\ R\bigl( (S_b)^m\bigr)\ ,\\     
                       \end{split}\]
      is $0$ for general $b\in B$, where $R$ is either $A^2_{hom}$ or $A^{2m}$. Combining this observation with equation (\ref{modif}) and claim (\ref{rat1}), we find that
      \[  \begin{split}  (\Gamma_b )_\ast=0\colon\ \ \ R\bigl( (S_b)^m\bigr)\ \to\ R\bigl( (S_b)^m\bigr)\ ,\\
                               (\Gamma_b^\prime)_\ast=0\colon\ \ \ R\bigl( (S_b)^m\bigr)\ \to\ R\bigl( (S_b)^m\bigr)\ \\
                           \end{split}\]
                   (where, once more, $R$ is either $A^2_{hom}$ or $A^{2m}$).
                   
  In view of the definition (\ref{defg}) of $\Gamma, \Gamma^\prime$ (and using that the cycles $\gamma_b, \gamma^\prime_b$ occuring in (\ref{defg}) are supported in codimension $1$ for $b\in B$ general, and so act trivially on $A^2_{hom}()$ and on $A^{2m}()$), it follows that                
         \begin{equation}\label{this} \begin{split} \Bigl( \Pi_2^{(S_b)^m}\circ \Cc_b\circ \Pi_{4m-2}^{(S_b)^m}\circ (\Gamma_{L^{2m-2}})_b \circ \Pi_2^{(S_b)^m} -  \Pi_2^{(S_b)^m}\Bigr){}_\ast&=0\colon\ \ \ A^2_{hom}\bigl( (S_b)^m\bigr) \to      A^2_{hom}\bigl( (S_b)^m\bigr)\ ,\\
                        \Bigl( \Pi_{4m-2}^{(S_b)^m}\circ (\Gamma_{L^{2m-2}})_b\circ \Pi_2^{(S_b)^m}\circ \Cc_b\circ \Pi_{4m-2}^{(S_b)^m} - \Pi_{4m-2}^{(S_b)^m}\Bigr){}_\ast&=0\colon\ \ \ A^{2m}\bigl( (S_b)^m\bigr) \to A^{2m}\bigl( (S_b)^m\bigr)\ ,\\
                       \end{split}\end{equation}
                  for general $b\in B$. Since $\Pi_2^{(S_b)^m}$ acts as the identity on $A^2_{(2)}((S_b)^m)$, it follows from the first line of (\ref{this}) that
              \[       \Bigl( \Pi_2^{(S_b)^m}\circ \Cc_b\circ \Pi_{4m-2}^{(S_b)^m}\circ (\Gamma_{L^{2m-2}})_b\Bigr){}_\ast   =\ide\colon\ \ \ A^2_{(2)}\bigl( (S_b)^m\bigr)\ 
                  \to\ A^2_{(2)}\bigl( (S_b)^m\bigr)\ ;\]
             in particular
             \[ \cdot L^{2m-2}\colon\ \ \ A^2_{(2)}\bigl( (S_b)^m\bigr)\ \to\ A^{2m}_{(2)}\bigl( (S_b)^m\bigr) \]
             is injective for general $b\in B$. Likewise, it follows from the second line of (\ref{this}) that
 \[ \Bigl( \Pi_{4m-2}^{(S_b)^m}\circ (\Gamma_{L^{2m-2}})_b\circ \Pi_2^{(S_b)^m}\circ \Cc_b\Bigr){}_\ast =\ide\colon\ \ \ A^{2m}_{(2)}\bigl( (S_b)^m\bigr)\ \to\ A^{2m}_{(2)}\bigl( (S_b)^m\bigr)\]
  for general $b\in B$. However, the image of
    \[ A^2_{(2)}\bigl( (S_b)^m\bigr)\ \xrightarrow{\cdot L^{2m-2}}\ A^{2m}\bigl( (S_b)^m\bigr) \]
    is contained in $A^{2m}_{(2)}((S_b)^m)$, since $L\in A^1((S_b)^m)=A^1_{(0)}((S_b)^m)$, and so this further simplifies to
    \[ \Bigl( (\Gamma_{L^{2m-2}})_b\circ \Pi_2^{(S_b)^m}\circ \Cc_b\Bigr){}_\ast =\ide\colon\ \ \ A^{2m}_{(2)}\bigl( (S_b)^m\bigr)\ \to\ A^{2m}_{(2)}\bigl( (S_b)^m\bigr)\ \]
    for general $b\in B$. In particular,
      \[ \cdot L^{2m-2}\colon\ \ \ A^2_{(2)}\bigl( (S_b)^m\bigr)\ \to\ A^{2m}_{(2)}\bigl( (S_b)^m\bigr) \]
             is surjective for general $b\in B$.  
    
    Theorem \ref{hard} is now proven for general $b\in B$. 
    To prove the theorem for {\em all\/} $b\in B$, one observes that the above argument can be made to work ``locally around a given $b_0\in B$'', i.e. given $b_0\in B$ one can find relative correspondences $\gamma, \gamma^\prime, \ldots$ supported in codimension $1$ and in general position with respect to the fibre over $b_0$.     
                               
      \end{proof}

   Theorem \ref{hard} can be reformulated in terms of Hilbert schemes:       
 
 \begin{corollary}\label{hardhilb} Let $S_b$ be a $K3$ surface of genus $g\le 10$, and let $X=(S_b)^{[m]}$ be the Hilbert scheme of length $m$ subschemes of $S$. Let $L\in A^1(\Ss^{m/B})$ be a relatively big line bundle, and set
   \[ L_X := (f_b)^\ast (p_b)_\ast (L_b)\ \ \ \in A^1(X)\ ,\]
   where $p_b\colon (S^b)^m\to (S_b)^{(m)}$ denotes the projection, and $f_b\colon (S^b)^{[m]}\to (S_b)^{(m)}$ denotes the Hilbert--Chow morphism. Then
   \[  \cdot (L_X)^{m-1}\colon\ \ \ A^2_{(2)}(X)\ \to\ A^{2m}_{(2)}(X) \]
    is an isomorphism. 
    
  Moreover, there exists a correspondence $C\in A^2(X\times X)$ inducing the inverse isomorphism.
  \end{corollary}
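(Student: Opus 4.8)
The plan is to deduce corollary \ref{hardhilb} from theorem \ref{hard}, by comparing the Chow ring of the Hilbert scheme $X=(S_b)^{[m]}$ with that of the power $(S_b)^m$ through the symmetric product $(S_b)^{(m)}=(S_b)^m/\Sy_m$ (a projective quotient variety, so that proposition \ref{quot} and the remark following it apply). Write $p=p_b\colon (S_b)^m\to (S_b)^{(m)}$ for the quotient map and $f=f_b\colon X\to (S_b)^{(m)}$ for the Hilbert--Chow morphism, and set
\[ \Phi:= {}^t\Gamma_{p}\circ \Gamma_{f}\ \in\ A^{2m}\bigl(X\times (S_b)^m\bigr)\ ,\qquad \Psi:= {}^t\Gamma_{f}\circ \tfrac{1}{m!}\,\Gamma_{p}\ \in\ A^{2m}\bigl((S_b)^m\times X\bigr)\ ,\]
so that $\Phi_\ast=p^\ast\circ f_\ast$ and $\Psi_\ast=f^\ast\circ\tfrac1{m!}\,p_\ast$ on Chow groups. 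Since both $p$ and $f$ exist relatively (notation \ref{fam2}, remark \ref{relsym}), $\Phi$ and $\Psi$ are fibrewise restrictions of relative correspondences; combined with the relative MCK decompositions of proposition \ref{prod}, remark \ref{relsym} and theorem \ref{charles}, this shows that $\Phi$ and $\Psi$ are compatible with the bigradings $A^\ast_{(\ast)}$.

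The crux is the claim that $\Phi$ and $\Psi$ induce mutually inverse isomorphisms
\[ A^2_{(2)}(X)\ \xrightarrow{\ \sim\ }\ A^2_{(2)}\bigl((S_b)^m\bigr)^{\Sy_m}\ ,\qquad A^{2m}_{(2)}(X)\ \xrightarrow{\ \sim\ }\ A^{2m}_{(2)}\bigl((S_b)^m\bigr)^{\Sy_m}\ .\]
On the one hand, $\Phi_\ast\Psi_\ast=p^\ast\circ\tfrac1{m!}\,p_\ast=\tfrac1{m!}\sum_{g\in\Sy_m}g_\ast$ is the identity on $\Sy_m$--invariants. On the other hand, $\Psi_\ast\Phi_\ast=f^\ast f_\ast$ is a projector onto the direct summand $f^\ast A^\ast\bigl((S_b)^{(m)}\bigr)$ of $A^\ast(X)$, and one must check that it is the identity on $A^2_{(2)}(X)$ and on $A^{2m}_{(2)}(X)$, i.e. that these two graded pieces are insensitive to the Hilbert--Chow resolution. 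For this I would invoke the de Cataldo--Migliorini decomposition of the Chow motive of $X$ (which underlies Vial's MCK of theorem \ref{charles}): it gives $h^2(X)\cong h^2\bigl((S_b)^{(m)}\bigr)\oplus\LLL$ and, dually, $h^{4m-2}(X)\cong h^{4m-2}\bigl((S_b)^{(m)}\bigr)\oplus\LLL^{\otimes(2m-1)}$, with the first summand sitting inside $h(X)$ via $f^\ast$ (recall $h\bigl((S_b)^{(m)}\bigr)=h\bigl((S_b)^m\bigr)^{\Sy_m}$). Since $A^2(\LLL)=0$ and $A^{2m}(\LLL^{\otimes(2m-1)})=0$, the groups $A^2_{(2)}(X)=A^2\bigl(h^2(X)\bigr)$ and $A^{2m}_{(2)}(X)=A^{2m}\bigl(h^{4m-2}(X)\bigr)$ are entirely supported on $f^\ast h\bigl((S_b)^{(m)}\bigr)$, which is exactly what is needed.

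Granting this comparison, the corollary follows formally. Replacing $L$ by its $\Sy_m$--symmetrization $\sum_{g\in\Sy_m}g^\ast L$ (still relatively big, so theorem \ref{hard} still applies), we may assume $L_b$ is $\Sy_m$--invariant; then multiplication by $(L_b)^{2m-2}$ commutes with the $\Sy_m$--action, so theorem \ref{hard} gives an isomorphism $A^2_{(2)}\bigl((S_b)^m\bigr)^{\Sy_m}\xrightarrow{\ \sim\ }A^{2m}_{(2)}\bigl((S_b)^m\bigr)^{\Sy_m}$ whose inverse is induced by the $\Sy_m\times\Sy_m$--symmetrization $C_b^{\mathrm{sym}}$ of the correspondence $C_b$ of that theorem. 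Because $f^\ast$ and $p^\ast$ are ring homomorphisms and $L_X=f^\ast(p_\ast L_b)$ satisfies $p^\ast(p_\ast L_b)=\sum_{g\in\Sy_m}g_\ast L_b=m!\,L_b$, the isomorphisms $\Psi_\ast$ carry the operator $\cdot(L_b)^{2m-2}$ on $A^\ast_{(2)}\bigl((S_b)^m\bigr)^{\Sy_m}$ to multiplication by the corresponding power of $L_X$ on $A^\ast_{(2)}(X)$, up to a nonzero scalar. Hence that operator is an isomorphism $A^2_{(2)}(X)\xrightarrow{\ \sim\ }A^{2m}_{(2)}(X)$, and $C:=\Psi\circ C_b^{\mathrm{sym}}\circ\Phi\in A^2(X\times X)$ induces its inverse. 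The one genuinely non-routine point is the claim of the previous paragraph, that $f^\ast f_\ast$ acts as the identity on $A^2_{(2)}(X)$ and $A^{2m}_{(2)}(X)$: this is where the precise shape of the Chow motive of the Hilbert scheme, and its compatibility with the MCK of theorem \ref{charles}, genuinely enters, and it is the step I expect to require the most care.
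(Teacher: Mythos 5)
Your proof is correct and follows essentially the same route as the paper: symmetrize $L$ to $p^\ast p_\ast L$ so that theorem \ref{hard} applies, identify $A^\ast_{(2)}$ of the symmetric product with the $\Sy_m$--invariants via $p^\ast$, and use the de Cataldo--Migliorini decomposition together with its compatibility with the MCK of theorem \ref{charles} to see that the Hilbert--Chow pullback $f^\ast$ is an isomorphism on $A^2_{(2)}$ and on $A^{2m}_{(2)}$; your correspondences $\Phi,\Psi$ and the projector $f^\ast f_\ast$ are an explicit repackaging of the paper's two commutative diagrams, and $C=\Psi\circ C_b^{\mathrm{sym}}\circ\Phi$ matches the paper's (implicit) inverse correspondence. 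The step you flag as delicate --- that the pieces $A^2_{(2)}(X)$ and $A^{2m}_{(2)}(X)$ only see the summand $f^\ast h\bigl((S_b)^{(m)}\bigr)$ --- is exactly the point the paper also settles by invoking de Cataldo--Migliorini plus the fact that Vial's MCK on $X$ is induced from the product, so your justification is at the same level as the paper's.
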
  
    
  \begin{proof} Let the symmetric group $\Sy_m$ act on $\Ss^{m/B}$ by permuting the factors, and let
   \[  p\colon\ \ \ \Ss^{m/B}\ \to\  \Ss^{(m)}:= \Ss^{m/B}/\Sy_m\]
   denote the quotient morphism. Theorem \ref{hard} applies to the line bundle
   \[ L^\prime:= p^\ast p_\ast (L) = \sum_{\sigma\in\Sy} \sigma^\ast(L)\ \ \ \in A^1(\Ss^{m/B})\ .\]
   There is a commutative diagram
     \[ \begin{array}[c]{ccc}
              A^2_{(2)}((S_b)^m)^{\Sy_m} & \xrightarrow{\cdot (L_b^\prime)^{m-1}}& A^{2m}_{(2)}((S_b)^m)^{\Sy_m}\\
              {\scriptstyle (p_b)^\ast} \uparrow{\scriptstyle \cong}  &&      {\scriptstyle (p_b)^\ast} \uparrow{\scriptstyle \cong} \\
               A^2_{(2)}((S_b)^{(m)}) & \xrightarrow{\cdot ((p_b)_\ast(L_b) )^{m-1}}& A^{2m}_{(2)}((S_b)^{(m)})\\   
             \end{array}\]   
       In view of theorem \ref{hard} (applied to $L^\prime$), the lower horizontal arrow is an isomorphism.      
                                   
  It follows from the de Cataldo--Migliorini isomorphism of motives \cite{CM} that there is an isomorphism (induced by a correspondence)
    \[ A^2(X)\cong A^2((S_b)^{(m)})\oplus A^1()\oplus A^0()\ ,\]
    and so in particular an isomorphism
    \[ A^2_{AJ}(X)\cong A^2_{AJ}((S_b)^{(m)})  \ .\]
    Since $A^2_{(2)}()\subset A^2_{AJ}()$, and the de Cataldo--Migliorini isomorphism respects the bigrading (by construction, the MCK decomposition for $X$ is induced by one for $(S_b)^m$), this implies that
     \[ f^\ast\colon\ \ \ A^2_{(2)}((S_b)^{(m)})\ \to\ A^2_{(2)}(X) \]
     is an isomorphism.
     
     Similarly, there is an isomorphism
     \[ f^\ast\colon\ \ \ A^{2m}((S_b)^{(m)})\ \xrightarrow{\cong}\ A^{2m}(X) \]
     which respects the bigrading. 
     
     Corollary \ref{hardhilb} now follows from what we have said above, in view of the commutative diagram 
     \[ \begin{array}[c]{ccc}
              A^2_{(2)}(X) & \xrightarrow{\cdot (L_X)^{m-1}}& A^{2m}_{(2)}(X)\\
              {\scriptstyle (f_b)^\ast} \uparrow{\scriptstyle \cong}  &&      {\scriptstyle (f_b)^\ast} \uparrow{\scriptstyle \cong} \\
               A^2_{(2)}((S_b)^{(m)}) & \xrightarrow{\cdot (p_\ast(L_b) )^{m-1}}& A^{2m}_{(2)}((S_b)^{(m)})\\   
             \end{array}\]                 
  \end{proof}

 \begin{remark}\label{question} Looking at corollary \ref{hardhilb}, one might hope that a similar result is true more generally. 
 Let $X$ be any hyperk\"ahler variety of dimension $2m$, and suppose the Chow ring of $X$ has a bigraded ring structure $A^\ast_{(\ast)}(X)$. One can ask the following questions:
 
 \noindent
 (\rom1) Let $L\in A^1(X)$ be an ample line bundle. Is it true that there are isomorphisms
   \[  \cdot L^{2m-2i+j}\colon\ \  A^i_{(j)}(X)\ \xrightarrow{\cong}\ A^{2m-i+j}_{(j)}(X)\ \ \ \hbox{for\ all\ } 0\le 2i-j\le 2m\  \ ?\]   
   
 \noindent
 (\rom2) Let $L\in A^1(X)$ be a big line bundle. Is it true that there are isomorphisms
  \[    \cdot L^{2m-i}\colon\ \  A^i_{(i)}(X)\ \xrightarrow{\cong}\ A^{2m}_{(i)}(X)\ \ \ \hbox{for\ all\ } 0\le i\le 2m\ \ ?\]     
  
  The answer to the first question is ``yes'' for generalized Kummer varieties \cite{hard}.
  The answer to both questions is ``I don't know, except for $i=2$ and $g$ low'' for Hilbert schemes of genus $g$ $K3$ surfaces. 
  
  (The question for $A^i_{ (j)}(S^{[m]})$ with $i>2$ and $g$ low becomes more complicated, as one would need an analogon of proposition \ref{voisin1} for higher fibre products 
  $\Ss^{m/B}$ with $m>2$.)
   \end{remark}

  \begin{remark} Let $X$ be either $S^m$ or $S^{[m]}$ where $S$ is a $K3$ surface of genus $g\le 10$. Let $L\in A^1(X)$ be a line bundle as in theorem \ref{hard} (resp. as in corollary \ref{hardhilb}).
 Provided $L$ is sufficiently ample, there exists a smooth complete intersection surface $Y\subset X$ defined by the linear system $\vert L\vert$. Theorem \ref{hard} (resp. corollary \ref{hardhilb}) then implies that $A^{2m}_{(2)}(X)$ is supported on $Y$, and that
   \[  A^2_{(2)}(X)\ \to\ A^2(Y) \]
   is injective. This injectivity statement is in agreement with Hartshorne's ``weak Lefschetz'' conjecture for Chow groups \cite{Ha} (we recall that it is expected that
   $A^2_{(2)}(X)=A^2_{hom}(X)$ for these $X$).
    \end{remark}

 \section{Main result}
 
 This section proves the main result of this note, theorem \ref{main}. The proof is based on the method of ``spread'' of cycles in nice families, as developed by Voisin \cite{V0}, 
 \cite{V1}, \cite{V8}, \cite{Vo}, \cite{Vo2}. The results announced in the introduction (theorems \ref{main4} and \ref{main6}) are immediate corollaries of theorem \ref{main}.

  \begin{theorem}\label{main} Let $g\in[ 2,10]$. Let $\XX=\Ss^{(m)}\to B$ denote the universal family of $m$--fold symmetric products of genus $g$ $K3$ surfaces (notation \ref{fam2}).
 Let $\Gamma\subset\XX$ be a codimension $2m-2$ subvariety, and let $\Gamma_b$ denote the restriction
  \[  \Gamma_b:=\Gamma\vert_{X_b}\ \ \ \in A^m(X_b)\ . \]
  Assume that
   \[  \cup \Gamma_b=0\colon\ \ \  H^{2,0}(X_b)\ \to\  H^{2m,2m-2}(X_b)\ \] 
    for very general $b\in B$. Then
   \[  A^2_{(2)}(X_b)\ \xrightarrow{\cdot \Gamma_b}\ A^{2m}(X_b)\ \to\ A^{2m}_{(2)}(X_b)  \]
   is the zero map, for all $b\in B$. (Here the last arrow is projection to the summand $A^{2m}_{(2)}(X_b)$.)
      \end{theorem}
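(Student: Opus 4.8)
The plan is to run Voisin's spread machinery, closely parallel to the proof of Theorem \ref{hard}, but now for the intersection-with-$\Gamma$ correspondence rather than for multiplication by a power of a line bundle. First I would encode the operation $\cdot \Gamma_b$ as a relative correspondence: set $R_\Gamma := (p_1)^\ast(\Gamma)\cdot \Delta_{\XX/B}\in A^{2m}(\XX\times_B\XX)$, so that $(R_\Gamma)_b$ acts on $A^\ast(X_b)$ as intersection with $\Gamma_b$, and then consider the composed relative correspondence $\Pi_{4m-2}^{\XX}\circ R_\Gamma\circ \Pi_2^{\XX}$, whose fibrewise action captures exactly the composite $A^2_{(2)}(X_b)\xrightarrow{\cdot\Gamma_b} A^{2m}(X_b)\to A^{2m}_{(2)}(X_b)$ we want to kill. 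The cohomological hypothesis $\cup\Gamma_b=0$ on $H^{2,0}$ — together with the fact that on a Hilbert scheme of a $K3$ the relevant Hodge pieces are generated in the way recalled in the introduction, so that $(\Pi_{4m-2}^{X_b})_\ast H^{4m-2}(X_b)$ is controlled by $H^{2m,2m-2}$ and $(\Pi_2^{X_b})_\ast H^2(X_b)$ by $H^{1,1}\oplus H^{2,0}\oplus H^{0,2}$ — should give that $\big(\Pi_{4m-2}^{\XX}\circ R_\Gamma\circ \Pi_2^{\XX}\big)\vert_{X_b}$ is homologically trivial for very general $b$, possibly after correcting by a relative correspondence supported in codimension one (the divisorial part, which acts trivially on $A^2_{hom}$ and on $A^{2m}$ for general $b$ anyway).

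Next I would pass from the symmetric products $\XX=\Ss^{(m)}$ down to the surface $\Ss$ using Propositions \ref{prod2} and \ref{prod3}: conjugating by the relative correspondences $\Theta_i,\Xi_i,\Theta'_i,\Xi'_i$ rewrites the fibrewise class of our modified correspondence as $\sum_{k,\ell}\Xi_k\circ \Gamma_{k,\ell}\circ\Theta_\ell$ (and its transpose) where each $\Gamma_{k,\ell}:=\Theta_k\circ(\text{our correspondence})\circ\Xi'_\ell\in A^2(\Ss\times_B\Ss)$ is a genuine codimension-$2$ relative correspondence on the \emph{surface} family, and is fibrewise homologically trivial. Because $\Ss\to B$ is one of the Mukai families of notation \ref{mukai}, i.e. a family of complete intersection-type $K3$ surfaces inside a homogeneous variety $\PP_g$ with trivial Chow groups, Proposition \ref{voisin1} applies to each $\Gamma_{k,\ell}$: there is $\delta_{k,\ell}\in A^2(\PP_g\times\PP_g)$ with $(\Gamma_{k,\ell})_b=-(\delta_{k,\ell})_b$ in $A^2(S_b\times S_b)$ for all $b$, and since $\PP_g$ has trivial Chow groups, $(\Gamma_{k,\ell})_b$ annihilates $A^\ast_{hom}(S_b)$. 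Reassembling via the product structure, this forces $\big(\Pi_{4m-2}^{X_b}\circ (R_\Gamma)_b\circ \Pi_2^{X_b}\big)_\ast=0$ on $A^2_{hom}(X_b)$ — hence on $A^2_{(2)}(X_b)\subset A^2_{hom}(X_b)$ — first for general $b$, and then for all $b$ by the usual ``work locally around a given $b_0$'' remark, choosing the auxiliary divisorial correspondences in general position with respect to the fibre over $b_0$. Finally one checks that the relevant $\XX=\Ss^{(m)}$ statement transfers to the Hilbert scheme $\Ss^{[m]}$ via the de Cataldo--Migliorini decomposition respecting the bigrading, exactly as in the proof of Corollary \ref{hardhilb}, and that $A^2_{(2)}(X_b)\in A^2_{(2)}$ and the Lagrangian fibre $A\in A^2_{(0)}$ of Propositions \ref{ht}, \ref{ir} give the statements announced in the introduction.

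The main obstacle I anticipate is the passage from the cohomological hypothesis to the \emph{fibrewise homological triviality of the corrected correspondence}: the hypothesis only says $\cup\Gamma_b$ kills $H^{2,0}$, whereas I need the full composite $\Pi_{4m-2}^{X_b}\circ R_\Gamma\circ\Pi_2^{X_b}$ to be homologically trivial, i.e. to kill all of $(\Pi_2^{X_b})_\ast H^2(X_b)$, which also contains a large $H^{1,1}$ part. The point is that the $H^{1,1}$ part of $(\Pi_2^{X_b})_\ast H^2$ is spanned by divisor classes, so $\cdot\Gamma_b$ maps it into $N^{2m-1}H^{4m-2}(X_b)$, which is precisely the part that $\Pi_{4m-2}^{X_b}$ composed with the divisorial correction $\gamma$ can absorb; making this bookkeeping precise — and verifying that after projecting with $\Pi_{4m-2}^{\XX}$ the residual term is genuinely supported on $\DD\times_B\DD$ for a relative divisor $\DD$, so that it acts trivially on $A^2_{hom}$ and $A^{2m}$ for general $b$ — is the delicate part, and is exactly where the structure of the MCK decomposition and the low-genus/$K3$ hypotheses are used.
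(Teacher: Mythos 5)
Your overall architecture is the one the paper uses: encode the operation as a relative correspondence, use the cohomological hypothesis together with Lefschetz $(1,1)$ and the irreducibility of the transcendental lattice to make it fibrewise homologically trivial up to a correction of small support, spread the correction via \cite[Proposition 3.7]{V0}, descend to the surface family by means of Propositions \ref{prod2} and \ref{prod3}, apply Proposition \ref{voisin1}, reassemble, and pass from general $b$ to all $b$ by working locally around a given $b_0$. The one genuine deviation is that you never invoke Theorem \ref{hard}: the paper first composes with the inverse hard--Lefschetz correspondence $\Cc$, so that its correspondence $\Gamma_0$ has degree zero (it acts $A^2_{(2)}\to A^2_{(2)}$) before descending to $\Ss\times_B\Ss$, and it also uses that $(\Cc_b)_\ast$ is an isomorphism in the reduction step; you instead sandwich the degree-$(2m-2)$ correspondence $\Pi_{4m-2}\circ R_\Gamma\circ\Pi_2$ directly between the insertion correspondences of Proposition \ref{prod3} (source side) and the extraction correspondences of Proposition \ref{prod2} (target side). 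This can be made to work: the extraction from $A^{2m}_{(2)}$ has degree $-(2m-2)$, so the sandwich is again a codimension-$2$ relative correspondence on $\Ss\times_B\Ss$ to which Proposition \ref{voisin1} applies, and carried out carefully it would bypass Theorem \ref{hard} altogether. Your formula $\Theta_k\circ(\cdot)\circ\Xi'_\ell$ does not typecheck as written (one needs the Proposition \ref{prod2} correspondence on the target side and the Proposition \ref{prod3} one on the source side), and the descent $p_\ast p^\ast=m!\cdot\mathrm{id}$ between $\Ss^{m/B}$ and $\Ss^{(m)}$ is glossed over, but these are bookkeeping matters.

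The genuine gap is in your treatment of the correction term, precisely the step you flag as delicate. You claim the residual correspondence can be taken supported in codimension one, on $\DD\times_B\DD$ for a relative divisor $\DD$, and that such a correspondence acts trivially on $A^2_{hom}$ and on $A^{2m}$ for general $b$. Triviality on $A^{2m}$ is indeed automatic, but triviality on $A^2_{hom}$ is not: a correspondence supported on $D_b\times D_b'$ with $D_b$ a divisor acts on $A^2$ through the restriction to (a resolution of) $D_b$, a $(2m-1)$-fold, and nothing forces this action to vanish; since what you must kill is exactly the action on the source group $A^2_{(2)}\subset A^2_{hom}$, divisorial support on the first factor is not enough. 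What the Hodge-theoretic step actually yields --- and what the paper uses --- is stronger: because the map kills the irreducible transcendental part of $H^2$ and sends the N\'eron--Severi part to algebraic classes in the target, its (only nonzero) K\"unneth component is matched by a cycle supported on $(\hbox{curve})\times(\hbox{divisor})$ in the paper's degree-zero normalization, respectively on $(\hbox{curve})\times(\hbox{curve})$ in yours, with a \emph{curve} on the source factor; it is this curve that makes the action on all of $A^2$ vanish for dimension reasons, codimension-$2$ classes restricting to zero on a curve. With the correction taken in this form and spread out as in \cite[Proposition 3.7]{V0}, your argument closes; as written, the key absorption step rests on an insufficient support condition.
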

   
   \begin{proof} 
      Let $f\colon\wt{\Gamma}\to\Gamma$ be a resolution of singularities, and let $\tau\colon \wt{\Gamma}\hookrightarrow \XX$ denote the composition of $f$ with the inclusion morphism $\Gamma\hookrightarrow\XX$. Let $p\colon \Ss^{m/B}\to \XX$ denote the quotient morphism.
  Let us now consider the relative correspondence
    \[ \Gamma_0:=   \Cc\circ {}^t \Gamma_p\circ  \Pi^\XX_{2m-2}\circ \Gamma_\tau\circ{}^t \Gamma_\tau \circ \Pi_2^\XX  \circ \Gamma_p\ \ \ \in A^{2m}(\Ss^{m/B}\times_B \Ss^{m/B})\ ,\]
  where $\Pi_j^\XX$ is as in remark \ref{relsym}, and $\Cc\in A^2(\Ss^{m/B}\times_B \Ss^{m/B})$ is as in the proof of theorem \ref{hard}.  
    
  By construction, for any $b\in B$, the restriction 
    \[   \Gamma_0\vert_{(S_b)^m\times (S_b)^m}\ \ \ \in A^{2m}((S_b)^m\times (S_b)^m)\]
     acts on Chow groups as
  \[     \begin{split}    ( \Gamma_0\vert_{(S_b)^m\times (S_b)^m})_\ast\colon\ \    A^2_{}((S_b)^m) \ \xrightarrow{p_\ast}\  A^2(X_b)\ \xrightarrow{(\Pi_2^{X_b})_\ast}\ A^2_{(2)}(X_b)  \ \xrightarrow{ \cdot \Gamma_b}\ A^{2m}_{}(X_b)&\\ \xrightarrow{(\Pi^{X_b}_{2m-2})_\ast}\ A^{2m}_{(2)}(X_b) 
        \ \xrightarrow{p^\ast}\ A^{2m}_{(2)}((S_b)^m)  \ \xrightarrow{(\Cc_b)_\ast}\ & A^2_{(2)}((S_b)^m)\ .\\
        \end{split}\]
  We now make the following claim: to prove theorem \ref{main} it suffices to prove that
   \begin{equation}\label{need}   \bigl((\Pi_2^{\Ss^{m/B}}\circ\Gamma_0)\vert_{(S_b)^m\times (S_b)^m}\bigr){}_\ast\stackrel{??}{=}0\colon\ \ \ A^{2}_{(2)}((S_b)^m)\ 
   \to\ A^{2}_{(2)}((S_b)^m)\ \ \  \forall b\in B\ .\end{equation}
   
  To prove the claim, we first remark that (as noted above)
    \[ \bigl(( \Gamma_0)\vert_{(S_b)^m\times (S_b)^m}\bigr){}_\ast  A^2((S_b)^m)\ \ \subset\ A^2_{(2)}((S_b)^m)\ ,\]
    and so adding $(\Pi_2^{(S_b)^m})_\ast$ doesn't change anything, i.e. the truth of statement (\ref{need}) implies that
    \[ \bigl((\Gamma_0)\vert_{(S_b)^m\times (S_b)^m}\bigr){}_\ast\stackrel{}{=}0\colon\ \ \ A^{2}_{(2)}((S_b)^m)\ 
   \to\ A^{2}_{(2)}((S_b)^m)\ \ \  \forall b\in B\ .   \]
   Next, we know from theorem \ref{hard} that $ A^{2m}_{(2)}((S_b)^m)  \ \xrightarrow{(\Cc_b)_\ast}\ A^2_{(2)}((S_b)^m)  $ is an isomorphism, and so this implies that also
   \[    \begin{split}   A^2_{}((S_b)^m) \ \xrightarrow{p_\ast}\  A^2(X_b)\ \xrightarrow{(\Pi_2^{X_b})_\ast}\ A^2_{(2)}(X_b)  \ \xrightarrow{ \cdot \Gamma_b}\ A^{2m}_{}(X_b)\ \xrightarrow{(\Pi^{X_b}_{2m-2})_\ast}\ A^{2m}_{(2)}(X_b)& \\ 
        \ \xrightarrow{p^\ast}\ A^{2m}_{(2)}((S_b)^m)& \\
        \end{split}  \] 
        is the zero map, for all $b\in B$.   
   Composing some more on both sides, this implies that also
    \[    \begin{split}   A^2(X_b)\ \xrightarrow{p^\ast} A^2_{}((S_b)^m) \ \xrightarrow{p_\ast}\  A^2(X_b)\ \xrightarrow{(\Pi_2^{X_b})_\ast}\ A^2_{(2)}(X_b)  \ \xrightarrow{ \cdot \Gamma_b}\ A^{2m}_{}(X_b)\ \xrightarrow{(\Pi^{X_b}_{2m-2})_\ast}\ A^{2m}_{(2)}(X_b)& \\ 
        \ \xrightarrow{p^\ast}\ A^{2m}_{(2)}((S_b)^m)\ \xrightarrow{p_\ast}\ A^{2m}_{(2)}(X_b)& \\
        \end{split}  \] 
        is the zero map, for all $b\in B$.   
        But $p_\ast p^\ast$ is a multiple of the identity, and so this implies that actually
 \[    A^2(X_b)\ \xrightarrow{(\Pi_2^{X_b})_\ast}\ A^2_{(2)}(X_b)  \ \xrightarrow{ \cdot \Gamma_b}\ A^{2m}_{}(X_b)\ \xrightarrow{(\Pi^{X_b}_{2m-2})_\ast}\ A^{2m}_{(2)}(X_b)  \] 
        is already the zero map, for all $b\in B$. This proves the claim, i.e. we are now reduced to proving statement (\ref{need}).

   The input we have at our disposition is that we know (from the coisotropic assumption) that
    \begin{equation}\label{coiso}  (\Gamma_0\vert_{(S_b)^m\times (S_b)^m})_\ast=0\colon\ \ \ H^{2,0}((S_b)^m)\ \to\ H^{2,0}((S_b)^m)\ \ \ \hbox{for\ very\ general\ }b\in B\ .\end{equation}
 
 We observe that (\ref{coiso}), combined with the Lefschetz (1,1) theorem, implies the following: for very general $b\in B$, there exist a curve $Y_b\subset (S_b)^m$, a divisor $D_b\subset (S_b)^m$ and a cycle $\gamma_b$ supported on $Y_b\times D_b\subset (S_b)^m\times (S_b)^m$, such that
  \[  \Gamma_0\vert_{(S_b)^m\times (S_b)^m} -\gamma_b=0\ \ \ \hbox{in}\ H^{4m}((S_b)^m\times (S_b)^m)\ .\]
     Thanks to Voisin's key result \cite[Proposition 3.7]{V0} (cf. also \cite[Proposition 4.25]{V1}), it is possible to spread out these data. That is, there exist subvarieties 
     $\YY\subset\Ss^{m/B}$, $\DD\subset\Ss^{m/B}$ of codimension $2m-1$ resp. $1$, and a cycle $\gamma\in A^{2m}(\Ss^{m/B}\times_B \Ss^{m/B})$ supported on $\YY\times_B \DD$ that does the job of the various $\gamma_b$, i.e. such that
  \[  \bigl(\Gamma_0-\gamma\bigr)\vert_{(S_b)^m\times (S_b)^m} =0\ \ \ \hbox{in}\ H^{4m}((S_b)^m\times (S_b)^m)\ \ \ \hbox{for\ very\ general\ } b\in B\ .\]  
  In other words, the relative correspondence defined as
    \[ \Gamma_1:= \Gamma_0-\gamma\ \ \ \in A^{2m}(\Ss^{m/B}\times_B \Ss^{m/B}) \] 
    has the property that 
      \begin{equation}\label{homtriv} \Gamma_1\vert_{(S_b)^m\times (S_b)^m} =0\ \ \ \hbox{in}\ H^{4m}((S_b)^m\times (S_b)^m)\ \ \ \hbox{for\ very\ general\ }b\in B\ .\end{equation}
    
It is more convenient to switch to correspondences in $A^2(\Ss\times_B \Ss)$.  
   To this end, we now define relative correspondences
   \[  \Gamma_{2}^{i,j}:=  \Xi_i\circ  \Gamma_1\circ \Theta_j\ \ \ \in   A^2(\Ss\times_B \Ss)\ \ \ (1\le i,j\le 2)\ ,  \]
   where $\Xi_i, \Theta_j$ are as in proposition \ref{prod3}. The relative correspondence $\Gamma_1$ being fibrewise homologically trivial (equation (\ref{homtriv})), the same holds 
   for the $\Gamma_2^{i,j}$:
   \[     \Gamma_2^{i,j}\vert_{S_b\times S_b} =0\ \ \ \hbox{in}\ H^{4}(S_b\times S_b)\ \ \ \hbox{for\ very\ general\ }b\in B\ \ \ (1\le i,j\le 2)\ .\]
   We can now apply proposition \ref{voisin1} to the $\Gamma_2^{i,j}$ (with $M=\PP_g$ and $L_r$ as given in subsection \ref{ssfam}). The conclusion is that there exist cycles
   $\delta^{i,j}\in A^2(\PP_g\times\PP_g)$ such that there is a fibrewise rational equivalence
     \[     \Gamma_2^{i,j}\vert_{S_b\times S_b} +\delta^{i,j}\vert_{S_b\times S_b}  =0\ \ \ \hbox{in}\ A^{2}(S_b\times S_b)\ \ \ \forall b\in B\ \ \ (1\le i,j\le 2)\ .\]     
     In particular, since ($\PP_g$ has trivial Chow groups and hence) the restriction $\delta^{i,j}\vert_{S_b\times S_b}$ acts trivially on $A^\ast_{hom}(S_b)$,
     this implies that
     \[   (\Gamma_2^{i,j}\vert_{S_b\times S_b})_\ast=0\colon\ \ \ A^\ast_{hom}(S_b)\ \to\ A^\ast(S_b)\ \ \      \forall b\in B\ \ \ (1\le i,j\le 2)\ .\] 
     This implies that also
          \[ \begin{split}    \Bigl( (\Pi_2^{\Ss^{m/B}}\circ \Gamma_1 \circ  \Pi_2^{\Ss^{m/B}})\vert_{(S_b)^m\times(S_b)^m} \Bigr){}_\ast &= \Bigl( ({\displaystyle\sum_{i,j\in\{1,2\}}} \Theta_i  \circ  \Gamma_2^{i,j}\circ \Xi_j)\vert_{S_b\times S_b} \Bigr){}{}_\ast \\  &=0\colon\ \ \   A^\ast_{hom}\bigl((S_b)^m\bigr)\ \to\ A^\ast\bigl((S_b)^m\bigr)\ \ \      \forall b\in B\ \\
          \end{split}    \]
     (here the first equality follows from proposition \ref{prod3}). 
    Since $A^2_{(2)}()\subset A^2_{hom}()$, this implies in particular that
         \[ A^2_{(2)}((S_b)^m)\ \xrightarrow{(\Gamma_1\vert_{(S_b)^m\times (S_b)^m})_\ast}\ A^2((S_b)^m)\ \xrightarrow{(\Pi_2^{(S_b)^m})_\ast}\    A^2_{(2)}((S_b)^m) \]
     is the zero map, for all $b\in B$.  
     
     For general $b\in B$, the restriction of the cycle $\delta$ to the fibre $(S_b)^m\times (S_b)^m$ will be supported on (curve)$\times$(divisor), and so will act trivially on $A^2((S_b)^m)$ for dimension reasons. That is, for general $b\in B$ we have equality
     \[ (\Gamma_1\vert_{(S_b)^m\times (S_b)^m})_\ast =     (\Gamma_0\vert_{(S_b)^m\times (S_b)^m})_\ast   \colon\ \ \ A^2((S_b)^m)\ \to\ A^2((S_b)^m)\ .\]
    The above thus implies that
      \[   A^2_{(2)}((S_b)^m)\ \xrightarrow{(\Gamma_0\vert_{(S_b)^m\times (S_b)^m})_\ast}\ A^2((S_b)^m)\ \xrightarrow{(\Pi_2^{(S_b)^m})_\ast}\    A^2_{(2)}((S_b)^m) \]
     is the zero map, for general $b\in B$. That is, we have now proven the desired statement (\ref{need}), and hence theorem \ref{main}, for general $b\in B$ (this already suffices to prove theorems \ref{main4} and \ref{main6} below). 
         
   To extend the statement to {\em all\/} $b\in B$, one notes that the construction of \cite[Proposition 3.7]{V0} (which was used above to globalize the various $\gamma_b$) can be done locally around a given $b_0\in B$.    
    \end{proof}

  As special cases of theorem \ref{main}, we can now prove the results announced in the introduction:
  
   \begin{theorem}\label{main4} Let $X=S^{[2]}$, where $S$ is a general $K3$ surface of genus $g=5$ or $g=10$. Let $A\subset X$ be a general fibre of the Lagrangian
 fibration $\phi\colon X\to\PP^2$ (subsection \ref{ssfib}). Then $A\in A^2_{(0)}(X)$ and
   \[ \cdot A\colon\ \ \ A^2_{hom}(X)\ \to\ A^4(X) \]
   is the zero map.
  \end{theorem}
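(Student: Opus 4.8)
The plan is to deduce the statement from Theorem~\ref{main}, after transferring the problem from the Hilbert square $X=S^{[2]}$ to the symmetric square $S^{(2)}$. First I would check that $A\in A^2_{(0)}(X)$: since $A=\phi^{-1}(p)$ for a general point $p\in\PP^2$ and $[p]=h^2$ in $A^2(\PP^2)$ (with $h$ the hyperplane class), generic flatness of $\phi$ gives $[A]=\phi^\ast(h^2)=(\phi^\ast h)^2$ in $A^2(X)$; as $\phi^\ast h$ is a divisor class it lies in $A^1(X)=A^1_{(0)}(X)$, so multiplicativity of the bigrading (Theorem~\ref{S2}) yields $[A]=(\phi^\ast h)^2\in A^2_{(0)}(X)$.

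Next I would reduce the vanishing of $\cdot A$ to a statement on the symmetric square. For $X=S^{[2]}$ with $S$ general one has $A^2(X)=A^2_{(0)}(X)\oplus A^2_{(2)}(X)$ with $A^2_{(0)}(X)\cap A^2_{hom}(X)=0$ (part of the package of \cite{SV}; alternatively a consequence of the known cases of the Beauville--Voisin conjecture for $S^{[2]}$), so $A^2_{hom}(X)=A^2_{(2)}(X)$; and since the bigrading is multiplicative and $[A]\in A^2_{(0)}(X)$, the map $\cdot A$ carries $A^2_{(2)}(X)$ into $A^4_{(2)}(X)$, so it is enough to prove that $\cdot A\colon A^2_{(2)}(X)\to A^4_{(2)}(X)$ vanishes. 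Let $f\colon X\to S^{(2)}$ be the Hilbert--Chow morphism. By the de Cataldo--Migliorini decomposition $h(S^{[2]})\cong h(S^{(2)})\oplus h(S)(-1)$, which respects the MCK bigradings (as recalled in the proof of Corollary~\ref{hardhilb}), $f^\ast\colon A^2_{(2)}(S^{(2)})\xrightarrow{\cong}A^2_{(2)}(X)$ is an isomorphism, and likewise $f^\ast\colon A^4(S^{(2)})\xrightarrow{\cong}A^4(X)$ (the complementary summand $A^3(S)$ vanishes), both with inverse $f_\ast$. Writing $\bar A:=f_\ast A\in A^2_{(0)}(S^{(2)})$ for the (codimension $2$) image cycle, the projection formula $f_\ast(A\cdot f^\ast c)=\bar A\cdot c$ shows that $\cdot A\colon A^2_{(2)}(X)\to A^4_{(2)}(X)$ is zero if and only if $\cdot\bar A\colon A^2_{(2)}(S^{(2)})\to A^4_{(2)}(S^{(2)})$ is zero.

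This last statement is precisely the conclusion of Theorem~\ref{main}, which I would apply to the family $\XX=\Ss_g^{(2)}\to B_g^0$ over the locus of $K3$ surfaces of Picard number one --- over which the Lagrangian fibration exists relatively (Proposition~\ref{fibr} for $g=5$, Proposition~\ref{ht} for $g=10$) --- and to the codimension $2=2m-2$ subvariety $\Gamma:=\overline{(f_{\mathrm{rel}})_\ast\mathcal A}\subset\Ss_g^{(2)}$, where $\mathcal A\subset\Ss_g^{[2]}\vert_{B_g^0}$ is the relative fibre of $\phi_\XX$ over a fixed general point of $\PP^2$ and $f_{\mathrm{rel}}$ is the relative Hilbert--Chow morphism. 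The remaining input is the cohomological hypothesis of Theorem~\ref{main}, namely that $\cup\bar A=0\colon H^{2,0}(S^{(2)})\to H^{4,2}(S^{(2)})$.

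I expect this last point to be the main obstacle, since it requires controlling the difference between $A$ and its image $\bar A$. Since $A$ is Lagrangian we have $A\cup\sigma=0$ in $H^{4,2}(X)$ for the symplectic form $\sigma$; pulling back along $f^\ast$, which is bijective on $H^{2,0}$ and injective on $H^6$, reduces $\cup\bar A=0$ to $Z\cup\sigma=0$ in $H^6(X)$, where $Z:=f^\ast\bar A-A$ is a cycle supported on the exceptional divisor $j_E\colon E\hookrightarrow X$. Here $E$ is the $\PP^1$--bundle $\pi_E\colon E\to S\cong\Delta_2$ whose fibres are the characteristic leaves of the coisotropic divisor $E$, so $\sigma\vert_E=\pi_E^\ast\sigma_S$; every component of $Z$ lies over the curve $\bar A\cap\Delta_2=f(A\cap E)\subsetneq\Delta_2$, hence (being of codimension one in $E$ and mapping to a curve in $S$) has a class of the form $\pi_E^\ast(\text{divisor on }S)$, with no relative hyperplane component --- a point I would confirm by a local analysis of $f$ along $\Delta_2$, or by a computation on the double cover $S\times S\to S^{(2)}$ equipped with $p_1^\ast\sigma_S+p_2^\ast\sigma_S$. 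Then $Z\cup\sigma=(j_E)_\ast\,\pi_E^\ast\bigl((\text{divisor})\cup\sigma_S\bigr)=0$, because a holomorphic $2$--form restricts to $0$ on any algebraic curve, so $(\text{divisor})\cup\sigma_S=0$ in $H^4(S)$. Once this is in place Theorem~\ref{main} applies and $\cdot A\colon A^2_{hom}(X)\to A^4(X)$ is zero. Finally, for the parenthetical assertion: if $b=A\cdot c$ with $c=c_0+c_2\in A^2_{(0)}(X)\oplus A^2_{(2)}(X)$, then $A\cdot c_2=0$ by the above, so $b=A\cdot c_0\in A^2_{(0)}(X)\cdot A^2_{(0)}(X)\subset A^4_{(0)}(X)$, on which the degree map is injective (\cite{SV}); hence $b$ is rationally trivial if and only if $\deg b=0$.
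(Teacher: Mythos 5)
Your overall route is the same as the paper's: show $A=(\phi^\ast h)^2\in A^2_{(0)}(X)$, transfer the problem to the symmetric square via the Hilbert--Chow morphism and the projection formula (using that pullback is an isomorphism on $A^2_{(2)}$ and pushforward on $A^4$), and apply Theorem \ref{main} to the relative cycle coming from the relatively defined Lagrangian fibration. There is, however, one genuine flaw in your reduction: you assert that $A^2_{(0)}(X)\cap A^2_{hom}(X)=0$ is ``part of the package of \cite{SV}'' (or follows from known cases of the Beauville--Voisin conjecture), so that $A^2_{hom}(X)=A^2_{(2)}(X)$. This is not known: it is exactly one of the properties that is ``expected (but not proven)'' for MCK decompositions (cf.\ the remark in subsection \ref{ss1}), and the Beauville--Voisin conjecture only concerns the subring generated by divisors and Chern classes, which is a priori smaller than $A^2_{(0)}(X)$. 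As written, your argument therefore only proves the vanishing on $A^2_{(2)}(X)$. The paper circumvents this with a one-line observation you should adopt: for $c\in A^2_{(0)}(X)\cap A^2_{hom}(X)$, the product $A\cdot c$ lies in $A^4_{(0)}(X)\cap A^4_{hom}(X)$, which \emph{is} zero since $A^4_{(0)}(X)\cong\QQ$ via the degree map (Shen--Vial); so $\cdot A$ kills that summand whether or not it vanishes.

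On the remaining point you are more explicit than the paper, which leaves the cohomological hypothesis of Theorem \ref{main} for $\bar A=h_\ast(A)\subset S^{(2)}$ implicit. Your verification via the exceptional divisor is essentially sound (an irreducible divisor of the $\PP^1$--bundle $E\to S$ dominating only a curve of $S$ is the full preimage of that curve, and $j_E^\ast\sigma=\pi_E^\ast\sigma_S$), but it can be replaced by one line: by the projection formula, $\bar A\cup\sigma'=h_\ast(A\cup h^\ast\sigma')=h_\ast(A\cup\sigma)=0$ because $A$ is Lagrangian, which is precisely the hypothesis needed on $H^{2,0}(S^{(2)})$. With the fix above and this simplification, your proof coincides with the paper's.
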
 
   
   \begin{proof} The first statement is easy: any point $p\in\PP^2$ is an intersection of two divisors, and so $A=\phi^{\ast}(p)\in A^2(X)$ is also an intersection of two divisors.
   
   As for the second statement, we have a decomposition
    \[  A^2_{hom}(X) = A^2_{(2)}(X)\oplus \bigl(A^2_{(0)}(X)\cap  A^2_{hom}(X) \bigr)\ \]
   (where the second summand is conjecturally zero).     
   We know that
      \[ \cdot A\colon\ \ \ A^2_{(0)}(X)\cap A^2_{hom}(X)\ \to\ A^4(X) \]
   is zero (the image lands in $A^4_{(0)}(X)\cap A^4_{hom}(X)=0$). It is thus sufficient to prove that
       \[ \cdot A\colon\ \ \ A^2_{(2)}(X)\ \to\ A^4(X) \]
   is the zero map.

   Let
     \[ h\colon X:= S^{[2]}\ \to\ S^{(2)} \]
     denote the Hilbert--Chow morphism, and let $A^\prime:=h(A)\subset S^{(2)}$. Since
     \[ h_\ast ( h^\ast(a)\cdot A) = a\cdot h_\ast(A)=a\cdot A^\prime\ \ \ \hbox{in}\ A^4(S^{(2)})\ \ \ \forall a\in A^2(S^{(2)})\ ,\]
     and 
     \[  \begin{split}    &h^\ast\colon\ \ \ A^2_{(2)}(S^{(2)})\ \to\ A^2_{(2)}(X)\ ,\\
                                &h_\ast\colon\ \ \ A^4(X)\ \to\ A^4(S^{(2)})\\
                                \end{split}\]
                                are isomorphisms, it suffices to prove that
          \[ \cdot A^\prime\colon\ \ \ A^2_{(2)}(S^{(2)})\ \to\ A^4(S^{(2)}) \]
   is the zero map. 
        
 We have seen (subsection \ref{ssfib}) that  the
      fibration $\phi$ exists relatively, and so $A\subset X$ exists relatively (i.e. there exists $\AAA\subset\XX$ such that $A$ is the restriction of $\AAA$ to the fibre $X$). It follows that $A^\prime$ also exists relatively (i.e. there exists $\AAA^\prime\subset\Ss^{(2)}$ such that $A^\prime$ is the restriction of $\AAA^\prime$ to a fibre). The result now follows from theorem \ref{main} applied to $\AAA^\prime$.
      \end{proof}
  
 \begin{corollary}\label{cor4} Let $X$ and $A$ be as in theorem \ref{main4}. Let $b\in A^4(X)$ be a $0$--cycle of the form
   $ b=A\cdot c $, where $c\in A^2(X)$. Then $b$ is rationally trivial if and only if $b$ is of degree $0$.
   \end{corollary}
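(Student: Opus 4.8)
The plan is to deduce the corollary formally from theorem \ref{main4} together with the bigraded ring structure on $A^\ast(X)$ coming from the Fourier decomposition (theorem \ref{S2}, equivalently theorem \ref{charles}). The only point requiring input from outside this note will be a standard fact about $0$--cycles on the Hilbert square.

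First I would decompose the class $c$ according to the bigrading. Since $X=S^{[2]}$, theorem \ref{S2} gives $A^2(X)=A^2_{(0)}(X)\oplus A^2_{(2)}(X)$, so I write $c=c_{(0)}+c_{(2)}$ with $c_{(j)}\in A^2_{(j)}(X)$. By theorem \ref{main4} the fibre class satisfies $A\in A^2_{(0)}(X)$, and since $A^\ast_{(\ast)}(X)$ is a bigraded ring this yields $A\cdot c_{(0)}\in A^4_{(0)}(X)$. On the other hand $c_{(2)}\in A^2_{(2)}(X)\subset A^2_{hom}(X)$, by part (\rom2) of theorem \ref{S2}, and theorem \ref{main4} asserts that $\cdot A\colon A^2_{hom}(X)\to A^4(X)$ is the zero map; hence $A\cdot c_{(2)}=0$. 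Therefore
  \[ b= A\cdot c= A\cdot c_{(0)}\ \in\ A^4_{(0)}(X)\ .\]

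To finish, I would invoke the fact that for the Hilbert square the degree map is injective on the piece $A^4_{(0)}(X)$ --- equivalently, that $A^4_{(0)}(S^{[2]})$ is the one--dimensional subspace $\QQ\,{\mathfrak o}_X$ spanned by the canonical $0$--cycle, so that $\deg\colon A^4_{(0)}(X)\xrightarrow{\ \sim\ }\QQ$; this is part of the established structure of the Chow ring of $S^{[2]}$ (\cite{SV}, \cite{V13}). Since we have shown $b\in A^4_{(0)}(X)$, the cycle $b$ vanishes in $A^4(X)$ as soon as $\deg b=0$, while conversely a rationally trivial $0$--cycle obviously has degree $0$ --- which is exactly the corollary. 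I do not expect any genuine obstacle here: once theorem \ref{main4} is available the statement is essentially formal, the sole ingredient borrowed from elsewhere being the injectivity of $\deg$ on $A^4_{(0)}(S^{[2]})$, i.e.\ the vanishing $A^4_{(0)}(X)\cap A^4_{hom}(X)=0$.
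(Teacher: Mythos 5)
Your argument is correct and is essentially the paper's own proof: decompose $c=c_{(0)}+c_{(2)}$, kill $A\cdot c_{(2)}$ by theorem \ref{main4}, note $A\cdot c_{(0)}\in A^4_{(0)}(X)$ since $A\in A^2_{(0)}(X)$ and $A^\ast_{(\ast)}(X)$ is a bigraded ring, and conclude via the one--dimensionality of $A^4_{(0)}(S^{[2]})$ (a known consequence of the Shen--Vial Fourier decomposition), on which the degree map is injective.
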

   
   \begin{proof} One can decompose $c=c_0+c_2$, where $c_j\in A^2_{(j)}(X)$. Since $A\cdot c_2=0$ (theorem \ref{main4}), there is equality
     \[ b= A\cdot c_0\ \ \ \in A^4_{(0)}(X)\ .\]
     But $A^4_{(0)}(X)$ is isomorphic to $H^4(X)\cong\QQ$.
     \end{proof}

 \begin{theorem}\label{main6} Let $X=S^{[3]}$, where $S$ is a general $K3$ surface of genus $9$. Let $A\subset X$ be a general fibre of the Lagrangian
 fibration $\phi$ (subsection \ref{ssfib}). Then $A\in A^3_{(0)}(X)$ and
   \[ \cdot A\cdot D\colon\ \ \ A^2_{hom}(X)\ \to\ A^6(X) \]
   is the zero map, for any divisor $D\in A^1(X)$.
  \end{theorem}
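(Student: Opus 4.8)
The plan is to deduce Theorem \ref{main6} from Theorem \ref{main} (with $m=3$, $g=9$), in close analogy with the deduction of Theorem \ref{main4} from Theorem \ref{main} (with $m=2$). The only genuinely new point is that the fibre $A$ has codimension $3$ whereas Theorem \ref{main} applies to codimension-$(2m-2)=4$ subvarieties; this is precisely why the extra divisor $D$ has to appear in the statement, and why we apply Theorem \ref{main} to $A\cdot D$ rather than to $A$.

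First I would dispose of the assertion $A\in A^3_{(0)}(X)$: a point $p\in\PP^3$ is a transverse intersection of three hyperplanes, so $A=\phi^\ast(p)$ is a product of three divisor classes on $X$; since $A^\ast_{(\ast)}(X)$ is a bigraded ring (Theorem \ref{charles}) and divisors lie in $A^1_{(0)}(X)$, it follows that $A\in A^3_{(0)}(X)$. Next, exactly as in the proof of Theorem \ref{main4}, I would reduce to the piece $A^2_{(2)}(X)$. Write $A^2_{hom}(X)=A^2_{(2)}(X)\oplus\bigl(A^2_{(0)}(X)\cap A^2_{hom}(X)\bigr)$ (the graded pieces $A^2_{(j)}(X)$ with $j\neq 0,2$ vanish, as one sees from the de Cataldo--Migliorini description of the bigrading, $A^2$ involving only the blocks $S^{(k)}$ with $k\le 2$); for $c$ in the second summand, $c\cdot A\cdot D$ lands in $A^6_{(0)}(X)\cap A^6_{hom}(X)$, which is $0$ because $A^6_{(0)}(X)$ is one-dimensional (it maps isomorphically onto $H^{12}(X)\cong\QQ$). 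So it remains to show that $\cdot A\cdot D\colon A^2_{(2)}(X)\to A^6(X)$ is zero for every $D\in A^1(X)$.

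The key step is a transfer to the symmetric product, again modelled on Theorem \ref{main4}. Let $h\colon X=S^{[3]}\to S^{(3)}$ be the Hilbert--Chow morphism and $A':=h(A)$, a codimension-$3$ subvariety. By the de Cataldo--Migliorini isomorphism (as in the proof of Corollary \ref{hardhilb}) $h^\ast\colon A^2_{(2)}(S^{(3)})\xrightarrow{\ \cong\ }A^2_{(2)}(X)$ and $h_\ast\colon A^6(X)\xrightarrow{\ \cong\ }A^6(S^{(3)})$ ($h$ being birational). Writing $c=h^\ast(a)$ and $D=\lambda\,h^\ast(\eta)+\mu E$ (with $\eta\in A^1(S^{(3)})$ and $E$ the exceptional divisor), the projection formula gives $h_\ast(c\cdot A\cdot D)=\lambda\, a\cdot[A']\cdot\eta+\mu\, a\cdot h_\ast([A]\cdot E)$; and $h_\ast([A]\cdot E)=0$, because $[A]\cdot E$ is supported on the surface $A\cap E$ whose image in $S^{(3)}$ lies in $A'\cap\mathrm{Sing}(S^{(3)})$, of dimension $\le 1$. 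Hence $c\cdot A\cdot D=0$ if and only if $a\cdot[A']\cdot\eta=0$, and by $\QQ$-linearity in $\eta$ it suffices to treat $\eta$ very ample; then $\Gamma':=A'\cap D_0'$, for $D_0'$ a general member of $|\eta|$ transverse to $A'$, is a codimension-$4$ subvariety with $[\Gamma']=[A']\cdot[\eta]$, and we are reduced to proving that $\cdot\Gamma'\colon A^2_{(2)}(S^{(3)})\to A^6(S^{(3)})$ is zero.

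This I would obtain from Theorem \ref{main} applied to $\XX=\Ss^{(3)}$ and to the spread of $\Gamma'$. The subvariety $\Gamma'$ exists relatively: Proposition \ref{ir} provides $\phi$, hence $A$ and $A'$, relatively over a dense open of $B_9$, and $\eta$ is (a multiple of) the locally constant polarization class. The cohomological hypothesis $\cup\Gamma'=0\colon H^{2,0}(S^{(3)})\to H^{6,4}(S^{(3)})$ holds: since $[\Gamma']=[A']\cdot[\eta]$ it is enough that $\cup A'=0$ on $H^{2,0}(S^{(3)})$, and this follows from $A'$ being Lagrangian in $S^{(3)}$ — the map $h|_A\colon A\to A'$ is birational and $h$ pulls the holomorphic symplectic form of $(S^{(3)})^{\mathrm{reg}}$ back to that of $X$, which vanishes on $A$, so the symplectic form vanishes on $A'^{\mathrm{reg}}\cap(S^{(3)})^{\mathrm{reg}}$ — together with the usual observation that a Lagrangian subvariety kills $H^{2,0}$ under cup product (using that $h^\ast$ is injective on cohomology and that the exceptional correction term is supported on a locus too small to carry a nonzero holomorphic $2$-form). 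Since $a\in A^2_{(2)}$ and $[\Gamma']\in A^4_{(0)}(S^{(3)})$, the product $a\cdot[\Gamma']$ already lies in $A^6_{(2)}(S^{(3)})$, so the conclusion of Theorem \ref{main} yields $a\cdot[\Gamma']=0$; unwinding the reductions proves the theorem. I expect the main obstacle to be exactly the verification of this last cohomological hypothesis on the singular variety $S^{(3)}$ — i.e. making the implication "Lagrangian $\Rightarrow$ cup product vanishes on $H^{2,0}$" rigorous in the presence of the quotient singularities and of the Hilbert--Chow exceptional locus — rather than any of the surrounding cycle-theoretic bookkeeping.
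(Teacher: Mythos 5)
Your overall route is the paper's: reduce to $A^2_{(2)}(X)$, transfer to the symmetric product via the Hilbert--Chow morphism, and feed a codimension-$4$ subvariety built from $A$ and a divisor into Theorem \ref{main}, the hypothesis on $H^{2,0}$ coming from the Lagrangian property. The genuine gap is your disposal of the exceptional component of $D$: the claim that $h_\ast([A]\cdot E)=0$ is false, and with it the dimension estimate $\dim h(A\cap E)\le 1$. Indeed, write $[A]=L_\phi^3$ with $L_\phi=\phi^\ast\OO_{\PP^3}(1)$, so $q(L_\phi,L_\phi)=0$ for the Beauville--Bogomolov form $q$, and let $\theta$ be ample on $S^{(3)}$. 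By the projection formula and the polarized Fujiki relation,
\[ \deg\bigl(h_\ast([A]\cdot E)\cdot\theta^2\bigr)=\int_X E\cdot L_\phi^3\cdot(h^\ast\theta)^2=c\,q(E,L_\phi)\,q(h^\ast\theta,L_\phi)^2 \]
with $c>0$ (every perfect matching pairing two copies of the isotropic class $L_\phi$ dies). On the rank-two N\'eron--Severi lattice of $X$ the $q$-orthogonal complement of the isotropic class $L_\phi$ is $\QQ L_\phi$ itself, and neither $E$ nor $h^\ast\theta$ is proportional to $L_\phi$ (their squares are respectively negative and positive), so the right-hand side is nonzero. Hence $h_\ast([A]\cdot E)\neq 0$ even in cohomology; a fortiori $h(A\cap E)$ is a surface, not a locus of dimension $\le 1$. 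Since $\Pic(X)_\QQ$ has rank $2$, the $\mu E$-part of a general $D$ is really present, so your reduction to the single class $[A']\cdot\eta$ does not prove the statement for arbitrary $D\in A^1(X)$.

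The paper handles $D$ differently, and your argument can be repaired in the same way: because $X$ has Picard number $2$, the divisor $D$ extends to $\DD\in A^1(\XX)$ over the family; one writes $\DD=\sum_j\lambda_j\DD_j$ with $\lambda_j\in\QQ$ and $\DD_j$ effective and in general position with respect to $\AAA$ (e.g.\ differences of relatively very ample classes, taking general members), and then applies Theorem \ref{main} directly to each codimension-$4$ subvariety $h(\AAA\cdot\DD_j)\subset\Ss^{(3)}$; the cohomological hypothesis holds for each of them because $\cup A=0$ on $H^{2,0}$, so no component of $D$ ever needs to be ``killed''. Equivalently, in your set-up you would have to apply Theorem \ref{main} also to the spread of $h(A\cap E)$, rather than assert that its class pushes forward to zero. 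The remainder of your argument (the reduction to $A^2_{(2)}$, the use of the de Cataldo--Migliorini isomorphisms, the observation that the product already lies in the $(2)$-graded piece, and the verification of the $H^{2,0}$-hypothesis via the Lagrangian property) parallels the paper; in particular the point you flag as the main obstacle is comparatively harmless, since $H^\ast(S^{(3)},\QQ)=H^\ast(S^3,\QQ)^{\Sy_3}$ and the vanishing can be checked after pulling back to $S^3$ --- the real issue is the exceptional-divisor term.
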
   
  
  \begin{proof} This is similar to the proof of theorem \ref{main4}. Again, the fact that $A\in A^3_{(0)}(X)$ is clear from the fact that a point in $\PP^3$ can be written as intersection of divisors.
  
  Since the fibration exists relatively the fibre $A\subset X$ exists relatively (i.e. there exists $\AAA\subset\XX$ such that $A$ is the restriction of $\AAA$ to the fibre $X$). The assumption implies that $X$ has Picard number $2$ and so the divisor $D$ also exists relatively, i.e. there exists $\DD\in A^1(\XX)$ such that $D$ is the restriction of $\DD$ to the fibre $X$. We may write $\DD$ as a sum 
    \[ \DD=\sum_j \lambda_j \DD_j\ \ \ \hbox{in}\ A^1(\XX)\ ,\]
    where $\lambda_j\in\QQ$ and $\DD_j$ is effective and in general position with respect to $\AAA$.
    
Let $h\colon \XX=\Ss^{[3]}\to \Ss^{(3)}$ denote the ``relative Hilbert--Chow morphism''.
  The result now follows upon applying theorem \ref{main} to the subvarieties
   \[  h(\AAA\cdot \DD_j)\ \ \ \subset\ \Ss^{(3)}\ .\]
    \end{proof}

\begin{corollary}\label{cor6} Let $X$ and $A$ be as in theorem \ref{main6}. Let $b\in A^6(X)$ be a $0$--cycle of the form
   $ b=A\cdot D\cdot c $, where $c\in A^2(X)$. Then $b$ is rationally trivial if and only if $b$ is of degree $0$.
   \end{corollary}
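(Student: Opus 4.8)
The plan is to deduce this formally from theorem \ref{main6}, in exactly the way corollary \ref{cor4} is deduced from theorem \ref{main4}; no genuinely new idea is needed. One implication is immediate: since the degree map factors through $A_0(X)=A^6(X)$, a rationally trivial $0$--cycle has degree $0$.

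For the converse I would assume $\deg b=0$ and write $b=A\cdot D\cdot c$ with $c\in A^2(X)$. First, using the bigrading of theorem \ref{charles}, decompose $c=c_0+c^\prime$ with $c_0\in A^2_{(0)}(X)$ and $c^\prime\in\bigoplus_{j\not=0}A^2_{(j)}(X)$. A class in $A^i_{(j)}(X)=(\Pi^X_{2i-j})_\ast A^i(X)$ has cohomology class in $H^{2i-j}(X)$, while a codimension $i$ cycle class also lies in $H^{2i}(X)$; for $j\not=0$ these graded pieces are distinct, so the class is homologically trivial. Hence $c^\prime\in A^2_{hom}(X)$, and theorem \ref{main6} gives $A\cdot D\cdot c^\prime=0$ in $A^6(X)$, so that $b=A\cdot D\cdot c_0$.

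The remaining point is to see that $b\in A^6_{(0)}(X)$ and that the degree map restricts to an isomorphism $A^6_{(0)}(X)\cong\QQ$. For the first, $A\in A^3_{(0)}(X)$ by theorem \ref{main6}, $D\in A^1(X)=A^1_{(0)}(X)$ (all divisor classes lie in the $(0)$--part, by the same cohomology--degree argument applied with $i=1$, using $A^1_{hom}(X)=0$), and $c_0\in A^2_{(0)}(X)$; since $A^\ast_{(\ast)}(X)$ is a bigraded ring, the product $A\cdot D\cdot c_0$ lies in $A^6_{(0)}(X)$. For the second, $A^6_{(0)}(X)$ is one--dimensional, spanned by the canonical $0$--cycle $\mathfrak o_X$ of degree $1$: this is part of the MCK package of theorem \ref{charles}, the component $\Pi^X_{12}$ being of the form $X\times\mathfrak o_X$. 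Therefore $\deg b=0$ forces $b=0$.

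I do not expect a real obstacle here: the argument is purely formal once theorem \ref{main6} is granted. The only step that goes slightly beyond bookkeeping is the one--dimensionality of $A^6_{(0)}(X)$, which is however already contained in theorem \ref{charles}.
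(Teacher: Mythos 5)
Your proof is correct and follows essentially the same route as the paper: decompose $c$ into its graded pieces, kill everything outside $A^2_{(0)}(X)$ via theorem \ref{main6} (since those pieces are homologically trivial), and conclude because $b=A\cdot D\cdot c_0$ lands in $A^6_{(0)}(X)\cong\QQ$, where degree is injective. If anything you are slightly more careful than the paper, in handling possible pieces $A^2_{(j)}(X)$ with $j<0$ (whose vanishing is open for $S^{[3]}$) and in justifying the one--dimensionality of $A^6_{(0)}(X)$ via the shape of the top Chow--K\"unneth projector.
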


\begin{proof} One can decompose $c=c_0+c_2$, where $c_j\in A^2_{(j)}(X)$. Since $A\cdot D\cdot c_2=0$ (theorem \ref{main6}), there is equality
     \[ b= A\cdot D\cdot c_0\ \ \ \in A^6_{(0)}(X)\ .\]
     But $A^6_{(0)}(X)$ is isomorphic to $H^6(X)\cong\QQ$.
\end{proof}

   \begin{remark} Let $X_b$ be the Hilbert square of a very general $K3$ surface $S_b$. Let $\Gamma_b\subset X_b$ be a coisotropic subvariety of codimension $2$. It follows from a result of Voisin \cite[Proposition 4.2]{V14} that there is a homological equivalence
     \begin{equation}\label{cchom} \Gamma_b = \sum_i \lambda_i C_i\ \ \ \hbox{in}\ H^4(X_b)\ ,\end{equation}
    where $\lambda_i\in\QQ$ and $C_i\subset X_b$ is a constant cycle surface.   
  
  Suppose we know in addition that $\Gamma_b\in A^2_{(0)}(X_b)$ (for instance because $\Gamma_b$ is the fibre of a Lagrangian fibration). Then conjecturally, equality (\ref{cchom}) implies there is a rational equivalence
    \begin{equation}\label{ccrat} \Gamma_b \stackrel{??}{=} \sum_i \lambda_i C_i\ \ \ \hbox{in}\ A^2(X_b)\ ,\end{equation}
    which clearly would imply theorem \ref{main4}. 
    
    Unfortunately, I have not been able to prove equality (\ref{ccrat}). The approach taken here (using the Fourier decomposition of \cite{SV}) only yields the weaker statement that
    \[  \Gamma_b \stackrel{}{=} \sum_i \lambda_i C_i+ R\ \ \ \hbox{in}\ A^2(X_b)\ , \]
    where $R$ is in the ``troublesome part''  $A^2_{(0)}(X_b)\cap A^2_{hom}(X_b)$ (which is conjecturally zero). This is not sufficient to settle theorem \ref{main4}, which is why we needed to work harder to prove theorem \ref{main4}.
           \end{remark}

\vskip1cm
\begin{nonumberingt} 
 Thanks to J.S. Bach and to Yasuyo, for enduringly making the world more beautiful.
  \end{nonumberingt}

\vskip1cm


\begin{thebibliography}{dlPG99}



\bibitem{Beau0} A. Beauville, Some remarks on K\"ahler manifolds with $c_1=0$, in: Classification of algebraic and analytic manifolds (Katata, 1982), Birkh\"auser Boston, Boston 1983,

\bibitem{Beau1} A. Beauville, Vari\'et\'es K\"ahleriennes dont la premi\`ere classe de Chern est nulle, J. Differential Geom. 18 no. 4 (1983), 755---782,

\bibitem{BV} A. Beauville and C. Voisin, On the Chow ring of a $K3$ surface, J. Alg. Geom. 13 (2004), 417---426,

\bibitem{Beau15} A. Beauville, Fano threefolds and $K3$ surfaces, in: The Fano conference, University of Torino (Torino 2004), 175---184,

\bibitem{Beau3} A. Beauville, On the splitting of the Bloch--Beilinson filtration, in: Algebraic cycles and motives (J. Nagel et al., eds.), London Math. Soc. Lecture Notes 344, Cambridge University Press 2007,

\bibitem{Beau4} A. Beauville, Antisymplectic involutions of holomorphic symplectic manifolds, Journal of Topology 4 no. 2 (2011), 300---304, 

\bibitem{B} S. Bloch, Lectures on algebraic cycles, Duke Univ. Press Durham 1980,

\bibitem{BO} S. Bloch and A. Ogus, Gersten's conjecture and the homology of schemes, Ann. Sci. Ecole Norm. Sup. 4 (1974), 181---202,

\bibitem{CM} M. de Cataldo and L. Migliorini, The Chow groups and the motive of the Hilbert scheme of points on a
surface, Journal of Algebra 251 no. 2 (2002), 824---848,

\bibitem{Fer} A. Ferretti, The Chow ring of double EPW sextics, Algebra and Number Theory 6(3) (2009), 

\bibitem{BFu} B. Fu, Abelian fibrations on $S^{[n]}$, C. R. Math. Acad. Sci. Paris 337 no. 9 (2003), 593---596,

\bibitem{LFu0} L. Fu, On the coniveau of certain sub--Hodge structures, Math. Res. Lett. 19 (2012), 1097---1116,

\bibitem{LFu} L. Fu, Beauville--Voisin conjecture for generalized Kummer varieties, Int. Math. Res. Notices 12 (2015), 3878---3898,
2014,

\bibitem{LFu2} L. Fu, On the action of symplectic automorphisms on the $CH_0$--groups of some hyper-K\"ahler fourfolds, Math. Z. 280 (2015), 307---334,

\bibitem{FLV} L. Fu, R. Laterveer and Ch. Vial, The generalized Franchetta conjecture for some hyper--K\"ahler varieties, arXiv:1708.02919v2,

\bibitem{FTV} L. Fu, Z. Tian and Ch. Vial, Motivic hyperk\"ahler resolution conjecture for generalized Kummer varieties, arXiv:1608.04968,

\bibitem{F} W. Fulton, Intersection theory, Springer--Verlag Ergebnisse der Mathematik, Berlin Heidelberg New York Tokyo 1984,

\bibitem{Ha} R. Hartshorne, Equivalence relations on algebraic cycles and subvarieties of small codimension, in: Algebraic geometry, Arcata 1974, Proc. Symp. Pure Math. Vol. 29, Amer. Math. Soc., Providence 1975,

\bibitem{HT} B. Hassett and Y. Tschinkel, Abelian fibrations and rational points on symmetric products, International Journal of Mathematics 11(9) (2000), 1163---1176,


\bibitem{IM} A. Iliev and L. Manivel, Prime Fano threefolds and integrable systems, Math. Ann. 339 (4) (2007), 937---955,

\bibitem{IR} A. Iliev and K. Ranestad, The abelian fibration on the Hilbert cube of a $K3$ surface of genus $9$, International Journal of Mathematics 18(1) (2007), 1---26,

\bibitem{J0} U. Jannsen, Motives, numerical equivalence, and semi-simplicity, Invent. Math. 107(3) (1992), 447---452, 

\bibitem{J2} U. Jannsen, Motivic sheaves and filtrations on Chow groups, in: Motives (U. Jannsen et al., eds.), Proceedings of Symposia in Pure Mathematics Vol. 55 (1994), Part 1,  

\bibitem{J4} U. Jannsen, On finite--dimensional motives and Murre's conjecture, in: Algebraic cycles and motives (J. Nagel et al., eds.), Cambridge University Press, Cambridge 2007,


\bibitem{Kim} S.-I. Kimura, Chow groups are finite dimensional, in some sense,
Math. Ann. 331 (2005), 173---201,


\bibitem{K} S. Kleiman, The standard conjectures, in: Motives (U. Jannsen et alii, eds.), Proceedings of Symposia in Pure Mathematics Vol. 55 (1994), Part 1, 

\bibitem{hard} R. Laterveer, Hard Lefschetz for Chow groups of generalized Kummer varieties, Abh. Math. Semin. Univ. Hamburg 87 no. 1 (2017), 135---144,


\bibitem{excubic4} R. Laterveer, A family of cubic fourfolds with finite--dimensional motive, Journal of the Math. Soc. Japan,

\bibitem{GFC} R. Laterveer, The generalized Franchetta conjecture for some hyperk\"ahler fourfolds, submitted,

\bibitem{LV} R. Laterveer and Ch. Vial, On the Chow ring of Cynk--Hulek Calabi--Yau varieties and Schreieder
		varieties, arXiv:1712.03070,
	
\bibitem{Lin} H. Lin, On the Chow groups of zero--cycles of a generalized Kummer variety, Advances in Mathematics 298 (2016), 448---472,

\bibitem{Lin2} H. Lin, Lagrangian constant cycle subvarieties in Lagrangian fibrations, arXiv:1510.01437,

\bibitem{Mar} D. Markushevich, Rational lagrangian fibrations on punctual Hilbert schemes of $K3$ surfaces, Manuscripta Mathematica 120 No. 2 (2006), 131---150,

\bibitem{Muk0} S. Mukai, Symplectic structure of the moduli space of sheaves on an abelian or $K3$ surface., Invent. Math. 77 no. 1 (1984), 101---116,

\bibitem{Muk} S. Mukai, Curves, $K3$ surfaces and Fano $3$--folds of genus $\le 10$, in: Algebraic geometry and commutative algebra (H. Hijikata et al., eds.), Kinokuniya Tokyo 1988,

\bibitem{Mur} J. Murre, On a conjectural filtration on the Chow groups of an algebraic variety, parts I and II, Indag. Math. 4 (1993), 177---201,

\bibitem{MNP} J. Murre, J. Nagel and C. Peters, Lectures on the theory of pure motives, Amer. Math. Soc. University Lecture Series 61, Providence 2013,

\bibitem{OG} K. O'Grady, Moduli of sheaves and the Chow group of $K3$ surfaces, Journal de Math. Pures et Appliqu\'ees 100 no. 5 (2013), 701---718,

\bibitem{PSY} N. Pavic, J. Shen and Q. Yin, On O'Grady's generalized Franchetta conjecture, Int. Math. Res. Notices (2016), 1---13,

\bibitem{Rie} U. Rie\ss, On the Chow ring of birational irreducible symplectic varieties, Manuscripta Math. 145 (2014), 473---501,

\bibitem{Rie2} U. Rie\ss, On Beauville's conjectural weak splitting property, to appear in Int. Math. Res. Notices,

\bibitem{Saw} J. Sawon, Lagrangian fibrations on Hilbert schemes of points on $K3$ surfaces, Journal of Alg. Geom. 16(3) (2007), 477---497,

\bibitem{Sc} T. Scholl, Classical motives, in: Motives (U. Jannsen et alii, eds.), Proceedings of Symposia in Pure Mathematics Vol. 55 (1994), Part 1, 

\bibitem{SV} M. Shen and Ch. Vial, The Fourier transform for certain hyperK\"ahler fourfolds, Memoirs of the AMS 240 (2016), no.1139, vii+163 pp.,

\bibitem{SV2} M. Shen and Ch. Vial, The motive of the Hilbert cube $X^{[3]}$, Forum Math. Sigma 4 (2016), 55 pp.,

\bibitem{V3} Ch. Vial, Remarks on motives of abelian type, Tohoku Math. J. 69 (2017), 195---220,

\bibitem{V6} Ch. Vial, On the motive of some hyperk\"ahler varieties, J. f\"ur Reine u. Angew. Math. 725 (2017), 235---247,

\bibitem{V17} C. Voisin, On the Chow ring of certain algebraic hyperk\"ahler manifolds, Pure Appl. Math. Q. 4 no. 3 part 2 (2008), 
613---649,

\bibitem{Voi} C. Voisin, Coniveau $2$ complete intersections and effective cones, Geom. Funct. Anal. 19 (5) (2010), 1494---1513,

\bibitem{V13} C. Voisin, Chow rings and decomposition theorems for $K3$ surfaces and Calabi--Yau hypersurfaces, Geom. Topol.
16 (2012), 433---473,

\bibitem{V0} C. Voisin, The generalized Hodge and Bloch conjectures are equivalent for general complete intersections, Ann. Sci. Ecole Norm. Sup. 46, fascicule 3 (2013), 449---475,

\bibitem{V8} C. Voisin, Bloch's conjecture for Catanese and Barlow surfaces, J. Differential Geometry 97 (2014), 149---175,

\bibitem{Vo} C. Voisin, Chow Rings, Decomposition of the Diagonal, and the Topology of Families, Princeton University Press, Princeton and Oxford, 2014,

\bibitem{V1} C. Voisin, The generalized Hodge and Bloch conjectures are equivalent for general complete intersections, II, J. Math. Sci. Univ. Tokyo  22 (2015), 491---517,

\bibitem{Vo2} C. Voisin, Hodge structures, Coniveau and Algebraic Cycles, in: ``The Legacy of Bernhard Riemann After One Hundred and Fifty Years'', ALM35, Higher Education Press and International Press, Beijing Boston 2016,

\bibitem{V14} C. Voisin, Remarks and questions on coisotropic subvarieties and $0$--cycles of hyper--K\"ahler varieties, in: K3 Surfaces and Their Moduli, Proceedings of the Schiermonnikoog conference 2014 (C. Faber et al., eds.), Progress in Maths 315,  Birkh\"auser 2016,


\bibitem{Yin} Q. Yin, Finite--dimensionality and cycles on powers of $K3$ surfaces, Comment. Math. Helv. 90 (2015), 503---511.


\end{thebibliography}
\end{document}